\newfont{\bb}{msbm10 at 11pt}
\newfont{\bbsmall}{msbm8 at 8pt}
\def\cL{\mathcal{L}}
\def\cS{\mathcal{S}}
\def\cF{\mathcal{F}}
\def\R{\mathbb{R}}
\def\N{\mathbb{N}}
\def\B{\mathbb{B}}
\def\D{\mathbb{D}}
\def\esf{\mathbb{S}}
\newcommand{\ds}{\displaystyle}
\newcommand{\Int}{\mbox{\rm Int}}
\newcommand{\C}{\mbox{\bb C}}
\newcommand{\Z}{\mbox{\bb Z}}
\newcommand{\Nsmall}{\mbox{\bbsmall N}}
\newcommand{\Zsmall}{\mbox{\bbsmall Z}}
\newcommand{\rth}{\R^3}
\newcommand{\wt}{\widetilde}
\newcommand{\wh}{\widehat}
\newcommand{\ov}{\overline}
\newcommand{\ben}{\begin{enumerate}}
\newcommand{\een}{\end{enumerate}}
\def\a{{\alpha}}
\def\lc{{\cal L}}
\def\t{{\theta}}
\def\g{{\gamma}}
\def\G{{\Gamma}}
\def\l{{\lambda}}
\def\de{{\delta}}
\def\be{{\beta}}
\def\ve{{\varepsilon}}
\newtheorem{theorem}{Theorem}[section]
\newtheorem{lemma}[theorem]{Lemma}
\newtheorem{proposition}[theorem]{Proposition}
\newtheorem{remark}[theorem]{Remark}
\newtheorem{corollary}[theorem]{Corollary}
\newtheorem{definition}[theorem]{Definition}
\newtheorem{conjecture}[theorem]{Conjecture}
\newtheorem{assertion}[theorem]{Assertion}
\newtheorem{claim}[theorem]{Claim}
\definecolor{pp}{rgb}{.5,0,.7}
\definecolor{rrr}{rgb}{.9,0,.1}
\newenvironment{proof}{\smallskip\noindent{\it Proof.}\hskip \labelsep}
                         {\hfill\penalty10000\raisebox{-.09em}{$\Box$}\par\medskip}
\begin{document}

  \begin{title}
{The embedded {C}alabi-{Y}au conjecture for finite genus}
\end{title}

\begin{author}
{William H. Meeks III\thanks{This material is based upon
 work for the NSF under Award No. DMS -
  1309236. Any opinions, findings, and conclusions or recommendations
 expressed in this publication are those of the authors and do not
 necessarily reflect the views of the NSF.}
 \and Joaqu\'\i n P\' erez
\and Antonio Ros\thanks{Research partially supported by
MINECO/FEDER grants no. MTM2014-52368-P and MTM2017-89677-P.} }
\end{author}

\maketitle
\begin{abstract} Suppose $M$ is a complete,
embedded minimal surface in $\R^3$ with an infinite number of ends,
finite genus and compact boundary. We prove that
the simple limit ends of $M$ have properly embedded representatives
with compact boundary, genus zero and with constrained geometry.  We
use this result to show that if $M$ has at least two simple limit
ends, then $M$ has exactly two simple limit ends. Furthermore, we
demonstrate that $M$ is properly embedded in $\rth$ if and only if
$M$ has at most two limit ends if and only if $M$ has a countable
number of limit ends.
\par
\vspace{.17cm} \noindent{\it Mathematics Subject Classification:}
Primary 53A10, Secondary 49Q05, 53C42

\noindent{\it Key words and phrases:} Proper minimal surface,
embedded Calabi-Yau problem,
minimal lamination, limit end, injectivity radius function, locally simply connected.
\end{abstract}

\section{Introduction.}
The Calabi-Yau conjectures refer to a series of conjectures
concerning the nonexistence of a complete, minimally immersed
surface $f \colon M \rightarrow \rth$ whose image $f(M)$ is
constrained to be contained in a particular region of $\rth$
(see Calabi~\cite{ca4}, page 212 in Chern~\cite{che4}, problem~91 in
Yau~\cite{yau1} and page 360 in Yau~\cite{yau2}).  Calabi's original
conjectures~\cite{ca4} state that a complete, nonflat minimal
surface cannot be contained  in the unit ball $\B(1)=\{x \in \rth
\mid |x| < 1\}$  or even in a halfspace of $\R^3$.
Among the positive results on the Calabi-Yau conjectures, we mention that the Strong
Halfspace Theorem~\cite{hm10} implies the validity of the
conjectures for properly immersed minimal surfaces in a closed
halfspace. A spectacular positive result by Colding and
Minicozzi~\cite{cm35} is that any complete, embedded minimal surface $M$
in $\R^3$ with finite topology is proper, and so the Halfspace Theorem (Hoffman and Meeks~\cite{hm10})
implies that $M$ cannot be contained in a halfspace unless it is a finite number of parallel planes.
In contrast to Colding and Minicozzi's properness result for the
finite topology embedded Calabi-Yau
problem, Ferrer, Mart\'\i n,
Meeks and Nadirashvili have conjectured that there is a particular bounded domain
$\Omega$ in $\rth$  (see~\cite{fmm1} for a description of $\Omega$),
which is smooth except at one point and  satisfies the following
property:
{\em Every open surface  with compact (possibly empty) boundary whose ends have infinite genus
admits a complete, proper minimal {\em embedding} into $\Omega$.}
We refer the reader to
Section~\ref{secprelim} for a brief elementary topological discussion of
the notions of end, the genus of an end, limit end, simple limit end
and end representative
for any noncompact surface, terms  we will use freely in this manuscript.

The theory developed in this paper represents the first step in resolving
the following fundamental conjecture, which gives a strong converse to the just
mentioned existence conjecture of Ferrer, Mart\'\i n,
Meeks and Nadirashvili for open surfaces with compact boundary.

\begin{conjecture} [Embedded Calabi-Yau  Conjecture for Finite Genus]\label{conj}
Every connected, complete
embedded minimal surface $M \subset \rth$ of finite genus and compact (possibly empty)
boundary is properly embedded in $\rth$.
\end{conjecture}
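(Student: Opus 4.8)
The plan is to argue by contradiction, using the characterization proved in this paper that $M$ is properly embedded in $\rth$ if and only if it has at most two limit ends (equivalently, a countable number of limit ends). So assume that $M$ is a connected, complete, embedded minimal surface in $\rth$ of finite genus and compact boundary which is \emph{not} proper. Then $M$ has uncountably many limit ends; in particular it has infinitely many ends, so it is not of finite topology --- the finite-topology case already being settled, since Colding--Minicozzi's properness theorem then applies and the conjecture holds trivially for $M$. It thus remains to show that finite genus together with compact boundary is incompatible with having uncountably many limit ends.

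The first step is to extract a limiting minimal lamination. Fix a compact region $C\subset M$ containing $\partial M$ and carrying all the genus of $M$, so that the end representative $W=M\setminus C$ has genus zero, and choose a ball $\B(R_0)\subset\rth$ with $C\subset\B(R_0)$. Since $M$ is embedded but not proper, $\overline{M}\setminus M$ is nonempty; taking a sequence on $W$ that diverges in $M$ but converges in $\rth$ to a point $p\in\overline{M}\setminus M$, and applying the curvature estimates and compactness theory for embedded minimal surfaces, one produces a nonempty minimal lamination $\mathcal{L}$ of a neighborhood of $p$ --- in fact of $\rth\setminus\B(R_0)$ after enlarging $R_0$ --- onto which $W$ accumulates. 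Because $W$ has genus zero, the local removable singularity theorem for minimal laminations shows that $\mathcal{L}$ has empty singular set, and the limit leaves of $\mathcal{L}$, namely those onto which $W$ accumulates, are stable, hence flat by the classification of stable minimal surfaces in $\rth$. Therefore, away from $\B(R_0)$, $M$ accumulates onto a family of pairwise parallel planes, and along each such plane $P$ the surface $M$ has, by Colding--Minicozzi's local description, the structure of finitely many multivalued graphs over subdomains of $P$ winding around a finite collection of axis curves transverse to $P$.

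The crux is then to prove that, because $M$ has finite genus and compact boundary, \emph{every} limit end of $M$ is simple. Granting this, the second main theorem of this paper forces $M$ to have exactly two simple --- hence exactly two --- limit ends (a non-proper $M$ has at least two limit ends), so in particular $M$ has at most two limit ends, whence the characterization recalled above makes $M$ proper, contradicting our assumption. To establish that every limit end is simple one must control the geometry of $M$ near an arbitrary limit end: after passing to the genus-zero part $W$, show that $M$ is eventually locally simply connected on a scale comparable to the distance to the axis of the limit end; deduce from Colding--Minicozzi theory and the structure of genus-zero minimal laminations that the limit end admits a properly embedded, genus-zero representative with uniformly bounded second fundamental form and injectivity radius; and thereby verify the definition of a simple limit end. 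This is precisely the upgrade of the first main theorem of the present paper --- which produces such constrained representatives for those simple limit ends that exist --- to \emph{all} limit ends.

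The main obstacle is exactly this upgrade: ruling out a \emph{non-simple} limit end of a finite-genus, compact-boundary minimal surface. Around a non-simple limit end, $M$ fails to be locally simply connected at all scales near infinity, so arbitrarily small necks persist in every neighborhood of the end. I expect the heart of the argument to be a quantitative dichotomy for these necks: either they are catenoid-type, genus-free necks --- which are harmless, and indeed occur in the Riemann minimal examples --- in which case a finer analysis via the injectivity radius function and the chord-arc estimates of Colding--Minicozzi still yields a simple limit end; or else infinitely many of them carry topology, producing infinitely many pairwise disjoint, homotopically nontrivial cycles in the genus-zero region $W$, which is absurd. Turning this dichotomy into a proof --- in particular, showing that a genuine failure of local simple connectivity at infinity produces \emph{genus}, and not merely necks or spiraling sheets --- is where the real work remains, and is the goal of the later steps of the program that this paper initiates.
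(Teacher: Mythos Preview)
This statement is a \emph{conjecture}, not a theorem: the paper does not prove it, and explicitly presents it as the open problem that the paper's results are a first step toward. So there is no ``paper's own proof'' to compare against.

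Your proposal is not a proof either, and you essentially say so yourself in the last paragraph: the crux is to show that every limit end of a finite-genus $M$ is simple, and you acknowledge that ``this is where the real work remains.'' That is exactly right --- it is the whole conjecture. The paper's Theorem~\ref{thm1.2} already proves that $M$ is proper if and only if it has at most two limit ends, and (via Baire's theorem, Lemma~\ref{Baire}) that countably many limit ends forces at most two. What remains open is precisely ruling out an \emph{uncountable} set of limit ends, equivalently showing that $M$ cannot have a limit end that is not simple. Your proposed dichotomy (catenoid-type necks versus necks carrying topology) does not close this: genus-zero planar domains can have arbitrarily complicated end structure without any ``genus-carrying'' necks, so the second horn of your dichotomy need never occur, while the first horn is exactly the unsolved case.

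Your first step also has a gap. You assert that one can ``apply the curvature estimates and compactness theory for embedded minimal surfaces'' to produce a minimal lamination $\mathcal{L}$ of a neighborhood of $p$ onto which $W$ accumulates, and that the local removable singularity theorem then gives $\mathcal{L}$ empty singular set. But the lamination closure theorems (e.g.\ Theorem~1 or Remark~2 in~\cite{mr13}) require locally positive injectivity radius, and this is precisely what is \emph{not} known for a non-proper $M$ --- indeed, the entire content of Theorem~\ref{propos3.4} in this paper is the hard work of establishing such injectivity-radius bounds for representatives of \emph{simple} limit ends. Without that, the closure $\overline{M}$ need not be a lamination at all, and your appeal to stability of limit leaves and flatness is premature.
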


Corollary 1 in~\cite{mpr3} implies Conjecture~\ref{conj} under
the additional hypothesis that $M$ is a leaf of a minimal lamination
$\lc$ of $\rth$, or equivalently, when $M$ has locally bounded
Gaussian curvature in $\rth$. As mentioned above, Colding and
Minicozzi~\cite{cm35} have proved Conjecture~\ref{conj}
under the additional assumption that $M$ has finite topology.
In~\cite{mr13}, Meeks and Rosenberg  proved that connected,
complete embedded minimal surfaces in $\R^3$ with positive injectivity radius are proper;
their theorem is a generalization of the properness result of Colding
and Minicozzi since complete, embedded finite topology minimal surfaces in $\rth$
have positive injectivity radius.

These results, together with others by Bernstein and  Breiner~\cite{bb1},
Collin~\cite{col1}, Meeks and
P\'erez~\cite{mpe3} and  Meeks and Rosenberg~\cite{mr8},  imply that  a complete,
nonflat embedded minimal surface $M\subset \R^3$ with finite
topology has annular ends which are asymptotic to ends of planes,
catenoids or $M$ has just one end which is asymptotic to the end of a helicoid.
In all of these  cases, $M$ is proven to be conformally a compact Riemann surface
$\overline{M}$ punctured in a finite number of points (in
particular, $M$ is recurrent for Brownian motion), and the embedding
of $M$ into $\rth$   can be expressed analytically in terms
of meromorphic data defined on $\overline{M}$. In the case that
a complete embedded minimal surface $M$ of finite topology in $\R^3$ has nonempty compact boundary,
a similar
description of its conformal structure ($\partial M$ has full harmonic measure)
and of its  asymptotic behavior (a few more asymptotic types arise than in the case
without boundary) hold, see~\cite{mpe3} for details.
Concerning conformal questions, one consequence of the results in this paper is
Corollary~\ref{corolnew}, which states that if a properly embedded
minimal surface in $\R^3$ has a  limit end of genus zero, then it
is recurrent; this can be viewed as a generalization of our previous
result~\cite{mpr4} that any properly embedded minimal surface of
finite genus in $\R^3$ is recurrent.

Using the techniques developed by Colding and Minicozzi
\cite{cm21,cm22,cm24,cm23,cm35,cm25}, Meeks and Rosenberg
~\cite{mr13} and those in our papers \cite{mpr8,mpr11,mpr3,mpr4,mpr10},
we shall prove here that if a complete, embedded minimal surface of
finite genus in $\rth$ has 
a countable number of limit ends, then it is properly
embedded in $\rth$ (see Theorem~\ref{thm1.2} below). By the main result of
Collin, Kusner, Meeks and Rosenberg in~\cite{ckmr1}, any properly
embedded minimal surface in $\rth$ must have a countable number of
ends, even if it does not have finite genus. More generally, the
results in~\cite{ckmr1} imply that a properly embedded minimal
surface with compact boundary in $\rth$ can have at most two limit ends, and that
if it has empty boundary and two limit ends, then it is recurrent.

Our first key partial result on Conjecture~\ref{conj} is the following theorem, which  is the main
result in Section~\ref{sec3} (see Remark~\ref{properness-rem}).

\begin{theorem} \label{proper1end} Let $M\subset \rth$ be a complete embedded minimal surface
of finite genus with compact
boundary and exactly one limit
end. 
Then $M$ is properly embedded in $\rth$.
\end{theorem}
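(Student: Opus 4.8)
The plan is to reduce properness of $M$ to a statement about the intrinsic geometry of a single limit end, and then to control that geometry using the curvature estimates of Colding--Minicozzi together with the minimal lamination machinery developed in the authors' earlier papers. First I would observe that, since $M$ has compact boundary and exactly one limit end $\mathcal{E}$, any failure of properness must be detected near $\mathcal{E}$: outside a large compact set, a non-proper point $p\in\overline{M}\setminus M$ would be a limit of the surface, and one can extract a minimal lamination $\mathcal{L}$ of a neighborhood of $p$ in $\rth$ as a limit of large pieces of $M$, with $M$ accumulating on $\mathcal{L}$. The key point is that $M$ has \emph{finite genus}, so only finitely many ends of $M$ carry the topology; away from that finite set the surface is annular or planar-domain-like, and the limit lamination $\mathcal{L}$ therefore has leaves of genus zero with bounded geometry where it is not flat. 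I expect to invoke the earlier structural results (the finite-genus/locally-simply-connected machinery, and the characterization of limit ends via minimal laminations) to say that $M$ near $\mathcal{E}$ is "sandwiched'' in a way reminiscent of the Colding--Minicozzi picture for the one-ended finite-genus case.

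Next I would set up the argument by contradiction: assume $M$ is not proper in $\rth$. By the discussion above there is a point $p$ and a limit minimal lamination $\mathcal{L}$ with a leaf $L$ through $p$, and by the one-sided curvature estimate of Colding--Minicozzi (as used in~\cite{mr13,mpr3}), $L$ is stable, hence a plane $P$. Then $M$ accumulates on $P$ on at least one side, and the local picture is that of multi-valued graphs spiraling into $P$; this forces $M$ to have locally bounded Gaussian curvature in a neighborhood of $P$, at which point Corollary~1 of~\cite{mpr3} (quoted in the excerpt as covering the case when $M$ is a leaf of a minimal lamination of $\rth$, equivalently locally bounded curvature) applies to give properness — a contradiction. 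The subtle part is passing from "accumulates near one point'' to "has locally bounded curvature near the whole plane'': this is where one uses that $M$ has finite genus and only one limit end, so that the accumulation set is a single plane (not a more complicated lamination) and the chord-arc and curvature estimates propagate along it.

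The main obstacle, as I see it, is ruling out the possibility that the limit end $\mathcal{E}$ has infinite total curvature \emph{and} accumulates on itself in a way that is not modeled by a single flat leaf — i.e., excluding a "folded'' or "Colding--Minicozzi example''-type limiting behavior where small pieces of $M$ limit to a lamination with non-flat or stacked leaves. Handling this requires the quantitative locally-simply-connected estimate: one must show that on scales comparable to the injectivity radius, $M$ near $\mathcal{E}$ is a union of uniformly bounded-geometry pieces, so that any blow-up limit is a properly embedded minimal surface of genus zero with the constrained geometry asserted in the abstract, and such a limit is classified (a plane, catenoid, helicoid, Riemann-type example, or a limit of these) — none of which obstructs properness. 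I would organize this as: (1) reduce to studying $\mathcal{E}$; (2) produce a limit lamination from non-properness; (3) show the limit leaves are flat using stability/one-sided curvature estimates; (4) upgrade to locally bounded curvature on $M$ using finite genus and the single-limit-end hypothesis; (5) conclude via Corollary~1 of~\cite{mpr3}. Step (4) is the crux and is where the bulk of the work in Section~\ref{sec3} presumably lies.
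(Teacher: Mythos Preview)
Your outline has a genuine gap and diverges substantially from what the paper actually does.

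The core problem is in your steps (2)--(4). You propose to take a non-proper accumulation point $p$, form a limit lamination $\mathcal{L}$ near $p$, argue its leaves are stable hence flat, and then upgrade to locally bounded curvature on $M$ so that Corollary~1 of~\cite{mpr3} applies. But forming $\mathcal{L}$ in the first place already requires geometric control: the closure of a complete embedded minimal surface is a lamination only once you know the injectivity radius is locally positive (this is precisely the content of the Limit Lamination Closure Theorem in~\cite{mr13}). If the injectivity radius of $M$ collapses near $p$, there is no lamination to work with, and the one-sided curvature estimate --- which is a statement about embedded minimal \emph{disks} --- is unavailable at the relevant scale. So your argument is circular: the lamination step presupposes exactly the kind of local control you are trying to establish. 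Your step (4), ``upgrade to locally bounded curvature using finite genus and the single-limit-end hypothesis,'' is where the entire difficulty lives, and you have not proposed a mechanism.

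The paper's route is quite different and does not pass through locally bounded curvature at all. It reduces to showing that a genus-zero representative $E$ of the (necessarily simple) limit end has injectivity radius bounded away from zero outside a neighborhood of $\partial E$; properness of $E$ then follows from Theorem~\ref{thmmr}, and properness of $M=E\cup(M-\Int(E))$ follows since the complement has finite topology. To prove the injectivity radius bound, the paper argues by contradiction: if $I_E$ collapses along a sequence, one blows up on the \emph{scale of topology} using the Local Picture Theorem~\cite{mpr14}. Because $E$ has genus zero, the rescaled limits are classified --- a catenoid, a Riemann minimal example, or a two-column parking garage --- and each is eliminated by hand (Lemma~\ref{flux}, Propositions~\ref{2limit} and~\ref{cat}, Lemma~\ref{pg1}) via flux constraints, barrier constructions, least-area surfaces, and the L\'opez--Ros deformation. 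These case eliminations are the substance of Section~\ref{sec3}; nothing like them appears in your sketch.
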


More generally, we have the following extension of the above result,
which is proved in Section~\ref{sec5}.

\begin{theorem}
\label{thm1.2}
Suppose $M \subset \rth$ is a complete, connected, embedded minimal surface
of finite genus, an infinite number of ends and compact boundary
(possibly empty).  Then:
\begin{enumerate}
\item $M$ has at most two simple limit ends.
\item $M$ has exactly one or two limit ends if and only if $M$ is proper in $\rth$.
\item Suppose $M$ has a countable number of limit ends. Then:
\begin{description}
\item[{\it 3-A.}] $M$ has one or two limit ends.
\item[{\it 3-B.}] $M$ is proper in $\rth$.
\item[{\it 3-C.}] If $M$ has two limit ends, then its annular ends are planar.
\item[{\it 3-D.}] If $\partial M=\mbox{\rm \O }$, then $M$ has exactly two
limit ends and  $M$ is recurrent for Brownian motion.
\item[{\it 3-E.}] If  $\partial M \neq \mbox{\rm \O }$,
then $\partial M$ has full harmonic measure.
\end{description}
\end{enumerate}
\end{theorem}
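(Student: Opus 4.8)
\smallskip
\noindent{\it Proof (outline).}
The plan is to reduce everything to the two technical results established earlier in the paper --- the structure theorem for simple limit ends (each simple limit end of $M$ has a properly embedded representative with compact boundary, genus zero and constrained geometry) and Theorem~\ref{proper1end} (one limit end $\Rightarrow$ $M$ proper) --- together with the ordering and conformal results of \cite{ckmr1}, the properness of complete minimal surfaces of finite topology (\cite{cm35}, with \cite{mpe3} in the boundaried case), the recurrence statements \cite{mpr4} and Corollary~\ref{corolnew}, and elementary topology of the end space $\mathcal{E}(M)$. Note first that, since $M$ has infinitely many ends, $\mathcal{E}(M)$ is an infinite compact subset of a Cantor set and hence has an accumulation point, so $M$ always has at least one limit end; write $L\subseteq\mathcal{E}(M)$ for the nonempty compact set of limit ends. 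For Item~1, suppose for a contradiction that $M$ has three distinct simple limit ends $e_1,e_2,e_3$; choosing disjoint neighbourhoods of them in $\mathcal{E}(M)$ and invoking the structure theorem, I obtain pairwise disjoint, properly embedded, genus-zero representatives $\Sigma_i$ with compact boundary, and the constrained geometry provides a closed halfspace $H_i$, with boundary plane $P_i$, such that $\Sigma_i\subseteq H_i$ and $\Sigma_i$ accumulates at the plane at infinity of $H_i$ as almost-flat leaves parallel to $P_i$ that exhaust arbitrarily large disks and are joined by necks occurring in every large disk. If $P_1$ and $P_2$ were not parallel, a huge almost-planar leaf of $\Sigma_1$ near $P_1$ and one of $\Sigma_2$ near the transverse plane $P_2$, viewed as graphs over a common large disk, would have to cross, contradicting $\Sigma_1\cap\Sigma_2=\emptyset$; so all $P_i$ are parallel, say horizontal, and by pigeonhole two of the halfspaces, say $H_1$ and $H_2$, are co-oriented. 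But two disjoint co-oriented such representatives also meet, since a huge leaf of $\Sigma_2$ at a large height contains a neck location of $\Sigma_1$ --- a contradiction, proving Item~1. I expect this step to be the main obstacle: making precise the ``exhausting leaves with bounded-gap necks'' picture of a simple limit end and the resulting non-embeddedness of two co-oriented representatives; I would carry it out by a rescaling/compactness argument whose limit is a minimal lamination of $\rth$ by parallel planes, in the spirit of \cite{mpr3} and the Colding--Minicozzi lamination theory.

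\medskip
\noindent For Item~2, ``$\Leftarrow$'' is immediate: a proper $M$ has at most two limit ends by \cite{ckmr1}, and $|L|\ge 1$, so exactly one or two. For ``$\Rightarrow$'', if $M$ has one limit end it is proper by Theorem~\ref{proper1end}; if $M$ has exactly two limit ends $e_1,e_2$, they are isolated in $L$, hence simple, so the structure theorem gives disjoint properly embedded representatives $\Sigma_1,\Sigma_2$, which I take to be neighbourhoods of $e_1,e_2$; then $K:=M\setminus(\Sigma_1\cup\Sigma_2)$ has only finitely many ends (the only accumulation points of $\mathcal{E}(M)$ being $e_1,e_2$), hence is a complete embedded minimal surface of finite topology with compact boundary, hence proper by \cite{cm35} and \cite{mpe3}; so $M=\Sigma_1\cup\Sigma_2\cup K$, a finite union of properly embedded pieces meeting along compact curves, is proper. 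For Item~3, assume $L$ countable. \emph{(3-A)}: if $L$ is finite, its points are isolated in $L$, hence simple limit ends, so $|L|\le 2$ by Item~1; if $L$ were countably infinite it would (being compact metrizable) have infinitely many isolated points --- else deleting the finitely many isolated points would leave a nonempty compact perfect set, forcing $L$ uncountable --- contradicting Item~1; so $L$ is finite and $|L|\in\{1,2\}$. \emph{(3-B)} follows from (3-A) and Item~2.

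\medskip
\noindent \emph{(3-C)}: if $|L|=2$, then $M$ is proper by (3-B); by the ordering of the ends of a properly embedded minimal surface used in \cite{ckmr1}, the two limit ends are the top and bottom ends, and by Item~1 their limiting planes are parallel and oppositely oriented, which I normalize horizontal; hence every annular end of $M$ is a middle end and has zero logarithmic growth (nonzero growth would send $x_3\to\pm\infty$ on it, placing it above the top or below the bottom end), so, being a finite-total-curvature end with zero logarithmic growth, it is a planar end. \emph{(3-D)}: if $\partial M=\emptyset$, then $M$ is proper by (3-B) and, every limit end having genus zero, $M$ is recurrent by Corollary~\ref{corolnew} (equivalently \cite{mpr4}); to exclude $|L|=1$, note that then the unique limit end has a properly embedded genus-zero representative $\Sigma_0$ in a halfspace, $M\setminus\Sigma_0$ has finite topology with planar horizontal annular ends except one descending to $x_3=-\infty$, and a flux-balancing argument using the parabolicity of the subdomain $\Sigma_0$ of the recurrent surface $M$, together with the nonexistence of nonconstant positive superharmonic functions on $M$, yields a contradiction --- a second delicate point, which I would settle by the flux/recurrence analysis underlying \cite{ckmr1}; hence $|L|=2$. \emph{(3-E)}: if $\partial M\neq\emptyset$, then $M$ is proper by (3-B) and has a genus-zero limit end, so $\partial M$ has full harmonic measure by Corollary~\ref{corolnew} (recurrence in the boundaried sense).
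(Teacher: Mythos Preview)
Your overall architecture matches the paper's, but two steps diverge substantially from the paper's argument, and one of them is a genuine gap.

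\textbf{Item~1.} The paper does \emph{not} argue by showing that two co-oriented simple-limit-end representatives must intersect geometrically (``a huge leaf of $\Sigma_2$ contains a neck location of $\Sigma_1$''). In fact the neck locations are concentrated near a Lipschitz axis, not spread through every large disk, so your intersection claim is not obviously true and would itself require a nontrivial argument. Instead, after normalising so that $E_1,E_2$ are both top limit ends with horizontal limit tangent plane, the paper uses item~4 of Theorem~\ref{thm1.3} to see that a subend $E_1'\subset E_1$ enters the mean-convex region $X_2$ determined by $E_2\cup D_{E_2}$, then solves a least-area problem in the region $X_3$ bounded by $E_2$, $E_1'$, and the disk $D_{E_2}$. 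By Fischer-Colbrie the resulting stable surface $\widetilde{E}_2$ has finite total curvature; by the maximum principle its top end is catenoidal with \emph{positive} logarithmic growth. Since the annular ends of $E_1$ have nonpositive growth (Theorem~\ref{thm1.3}, item~1), $E_1$ is trapped between a positive half-catenoid and a negative half-catenoid, hence has quadratic area growth by~\cite{ckmr1}. Monotonicity then forces $E_1$ to have finitely many ends, a contradiction. This barrier/area-growth mechanism is the key idea you are missing.

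\textbf{Item~3-E.} Your appeal to Corollary~\ref{corolnew} ``in the boundaried sense'' is circular: the boundaried version is Corollary~\ref{corolnew2}, proved in Section~\ref{sec6} \emph{using} Theorem~\ref{thm1.2}. The paper instead proves 3-E directly: if all annular ends are planar, a horizontal plane cuts $M$ into finitely many pieces each lying in a closed halfspace, hence parabolic by Theorem~3.1 of~\cite{ckmr1}; if some annular end is catenoidal, one uses its graphical representative together with item~6 of Theorem~\ref{thm1.3} and quadratic area growth of the remaining pieces to again write $M$ as a finite union of parabolic surfaces along compact boundaries.

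\textbf{Minor points.} For 3-C, your ``nonzero growth sends $x_3\to\pm\infty$'' is the right intuition but the paper's clean route is: Theorem~\ref{thm1.3} item~1 gives nonpositive (resp.\ nonnegative) growth for the annular ends of the top (resp.\ bottom) limit-end representative, and embeddedness forces all growths to be zero. For 3-D, once you cite~\cite{mpr4} you get both recurrence \emph{and} the exclusion of a single limit end at once; your separate flux-balancing sketch is unnecessary. Items~2 and~3-A are essentially as in the paper (your perfect-set argument is a valid rephrasing of the Baire argument in Lemma~\ref{Baire}).
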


\begin{remark}
{\rm
In contrast to item~3-D of Theorem~\ref{thm1.2}, we note that Traizet~\cite{tra8}
has constructed a complete embedded minimal surface in $\rth$ of infinite genus,
with empty boundary, one limit end and infinitely many catenoidal type ends.
}
\end{remark}

The proof of Theorem~\ref{thm1.2}  depends on
Theorem~\ref{thm1.3} below, which
describes the geometry,
topology and conformal structure of certain representatives
of a simple limit end of genus zero for a complete embedded
minimal surface in $\R^3$; see Figure~\ref{figure1new} for a suggestive picture
describing the
key geometric features of such a representative.

Before stating Theorem~\ref{thm1.3}, we will need the following definition.

\begin{definition}
\label{def2.1}
{\rm
Let $E$ be a complete embedded minimal surface in $\rth$ with
nonempty compact boundary $\partial E$.
We define the {\it flux  vector of $E$} as
\begin{equation}
\label{FM}
F_E=\int _{\partial E}\eta \, \in \R^3,
\end{equation}
where $\eta $ denotes the inward pointing unit conormal vector to $E$ along $\partial E$.
}
\end{definition}

\begin{figure}
\begin{center}
\includegraphics[width=13cm]{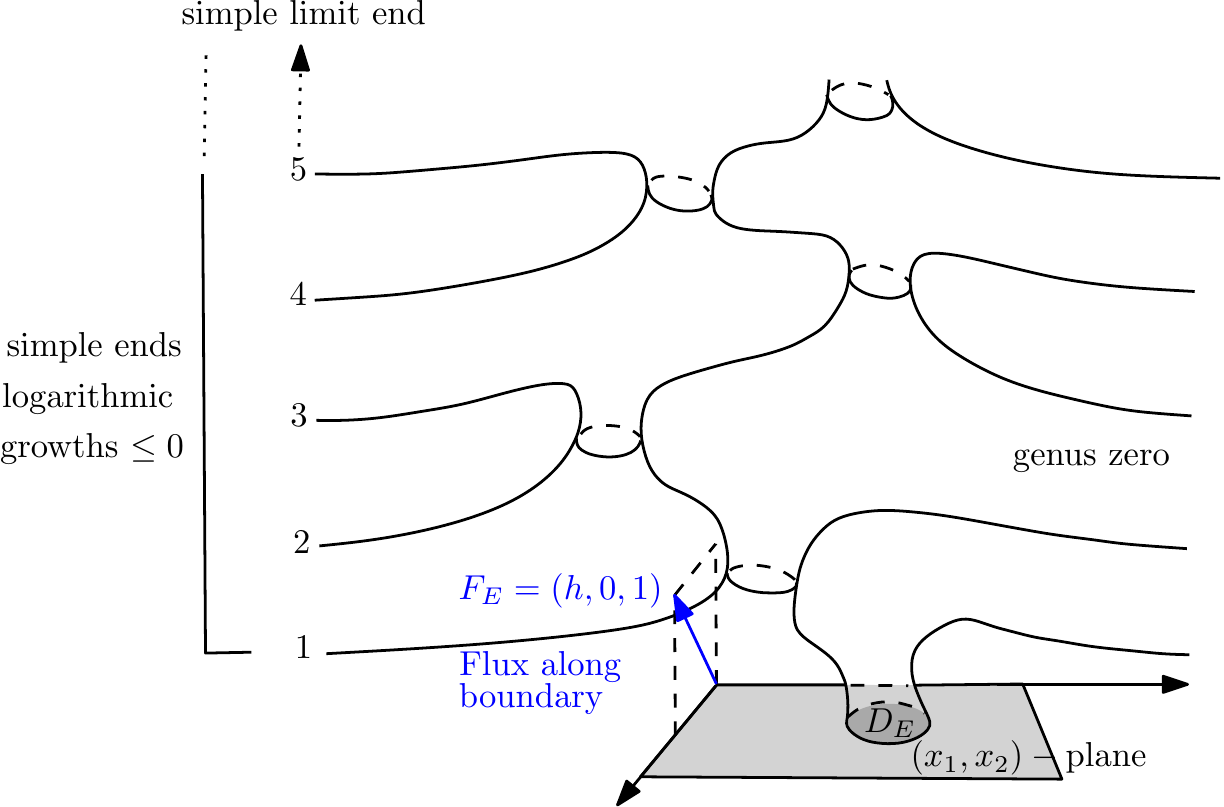}
\caption{A graphical representation of  the end representative $E$ in
Theorem~\ref{thm1.3}.} \label{figure1new}
\end{center}
\end{figure}

 \begin{theorem}
\label{thm1.3}
Suppose ${\bf e}$ is a simple limit end of  genus zero of a complete, connected, embedded
minimal surface $M \subset \rth$  with compact (possibly empty) boundary. Then
${\bf e}$ can be represented by a subdomain $E\subset \Int( M)$ that is properly
embedded in $\rth$ and, after a translation, rotation and homothety of $M$,
$E$ satisfies the following statements:
\begin{description}
\item[{\it 1.}] \label{thm1.3-1} The annular ends of $E$ have nonpositive logarithmic growths.
\item[{\it 2.}] \label{thm1.3-2} $E$ has genus zero and one limit end, which, in the
natural ordering of the ends of $E$ given by the
Ordering Theorem\footnote{Observe that the Ordering Theorem stated in~\cite{fme2}
also holds for properly embedded minimal surfaces in $\R^3$ with compact boundary.}
in~\cite{fme2}, is the top end of $E$.
\item[{\it 3.}]
\label{thm1.3-3} The boundary  $\partial E$ is a  simple closed curve
in the $(x_1, x_2)$-plane, and the flux vector $F_E$ of $E$ defined as in~(\ref{FM})
is $(h,0,1)$ for some $h>0$. Furthermore, $\partial E$ bounds
a convex disk $D_E \subset \{ x_3=0\} $
whose interior is disjoint from $E$, see Figure~\ref{figure1new}.
\item[{\it 4.}]
\label{thm1.3-4} There exists an orientation preserving
diffeomorphism $f \colon \rth \rightarrow \rth$ such that $f ({\cal R}_+) = E$,
where ${\cal R}_+$ is the top half of a Riemann minimal
example\footnote{See~\cite{mpr1, mpr6} for a
discussion of the singly-periodic, genus-zero, Riemann minimal
examples.} with boundary circle in the  $(x_1, x_2)$-plane.
\item[{\it 5.}]
\label{thm1.3-4'5}
$E$ has bounded Gaussian curvature.
\item[{\it 6.}]
\label{thm1.3-5} $E$ is conformally diffeomorphic to the closed punctured
disk $\{ z\in \C \ | \ 0<|z|\leq 1\}$ minus a sequence of points converging to
$0$. In particular, $\partial E$ has full harmonic
measure.
\end{description}
\end{theorem}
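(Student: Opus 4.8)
The plan is to obtain a properly embedded representative of ${\bf e}$ at once from the already‑proved one‑limit‑end case, and then to bootstrap its geometry, topology, flux and conformal type, using the Colding--Minicozzi structure theory for embedded minimal disks, the deformation theory of the Riemann minimal examples, and the parabolicity criterion for proper minimal surfaces with compact boundary in a halfspace. Concretely, choose an end representative $E_0 \subset \Int(M)$ of ${\bf e}$ of genus zero with compact boundary; by the definition of a simple limit end, $E_0$ can be taken so that ${\bf e}$ is its only limit end, so $E_0$ is a complete embedded minimal surface of finite genus with compact boundary and exactly one limit end, and Theorem~\ref{proper1end} gives that $E_0$ is properly embedded in $\R^3$. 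By the Ordering Theorem of~\cite{fme2}, which also holds for properly embedded minimal surfaces with compact boundary, the ends of $E_0$ are linearly ordered, and after a rotation the limit end is the top end.

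The crux is to prove that $E_0$ has bounded Gaussian curvature, equivalently that ${\bf e}$ is locally simply connected with horizontal limit planes. If the curvature were unbounded near ${\bf e}$, a rescaling of $E_0$ would, by the Colding--Minicozzi one‑sided curvature estimate \cite{cm21,cm22,cm24,cm23} and the structure of limits of sequences of embedded minimal disks, produce in $E_0$ either a helicoid‑type pair of interlocking multigraphs or a sequence of handles accumulating at ${\bf e}$; the former contradicts embeddedness together with the simplicity of the limit end, and the latter contradicts the genus of ${\bf e}$ being zero. I expect this to be the main obstacle: it is precisely here that the genus‑zero hypothesis has to be exploited, and where the injectivity‑radius and minimal‑lamination arguments underlying Theorem~\ref{proper1end} must be pushed further. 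Granting it, above some horizontal plane $E_0$ consists of a locally bounded number of sheets, each a small graph over a horizontal plane and joined by catenoidal necks; in particular $E_0$ has bounded curvature, which is item~5, and each non‑limit end of $E_0$ is asymptotic to a plane or a catenoid.

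Now cut $E_0$ by a horizontal plane at a height where the slice is a single convex simple closed curve (possible from the lamination picture near ${\bf e}$), and translate, obtaining $E := E_0 \cap \{x_3 \ge 0\}$; then $E$ has genus zero, its only limit end is ${\bf e}$, which is its top end, and $\partial E \subset \{x_3 = 0\}$ is a convex simple closed curve, giving item~2. Since $x_3$ is harmonic and nonnegative on the proper surface $E$, the maximum principle gives $E \cap \{x_3 = 0\} = \partial E$, so $\partial E$ bounds a convex disk $D_E \subset \{x_3 = 0\}$ with $\Int(D_E) \cap E = \emptyset$. The flux vector $F_E = \int_{\partial E}\eta$ is a homology invariant of $E$; since $E \subset \{x_3 \ge 0\}$ with planar boundary, comparing with the flux of $\{x_3 = 0\}$ across $D_E$ shows its third component is positive, so after a homothety and a rotation about the $x_3$‑axis $F_E = (h,0,1)$ with $h \ge 0$. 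A balancing argument rules out $h = 0$, which would force $E$ to be asymptotically catenoidal and hence of finite topology, contradicting the limit end; thus $h > 0$ and item~3 holds. Finally, the halfspace property $E \subset \{x_3 \ge 0\}$ together with $\partial E \subset \{x_3 = 0\}$, the ordering of the ends, and the maximum principle constrain the asymptotics of each annular end of $E$ and yield item~1, that its logarithmic growth is nonpositive.

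For item~4, the bounded‑curvature, genus‑zero, single‑limit‑end picture makes the limit end of $E$ smoothly asymptotic to the limit end of a Riemann minimal example, via the deformation and moduli‑space description of these examples in~\cite{mpr1,mpr6}; matching this asymptotic behavior near ${\bf e}$ with an ambient isotopy of the remaining compact pieces produces an orientation‑preserving diffeomorphism $f \colon \R^3 \to \R^3$ with $f({\cal R}_+) = E$. Only a diffeomorphism, not an isometry, is expected here, since $h > 0$ prevents $E$ from meeting $\{x_3 = 0\}$ orthogonally and hence blocks a smooth Schwarz reflection of $E$ into a complete Riemann example. For item~6, since $E$ is properly embedded with compact boundary in the halfspace $\{x_3 \ge 0\}$, the parabolicity criterion for such surfaces, via a universal superharmonic function on a halfspace minus a solid paraboloid (cf.\ \cite{ckmr1}), shows that $E$ is parabolic relative to $\partial E$, i.e.\ $\partial E$ has full harmonic measure. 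Uniformizing, the genus‑zero surface $E$ is conformally $\ov{\D}$ minus a closed countable set of interior punctures, these being the annular ends of $E$ (each conformally a punctured disk, by finite total curvature) together with the limit end ${\bf e}$, and parabolicity forces them to accumulate only at the single puncture coming from ${\bf e}$. Hence $E$ is conformally $\{0 < |z| \le 1\}$ minus a sequence converging to $0$, which is item~6.
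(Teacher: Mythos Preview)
Your proposal has two substantial gaps.

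\textbf{Bounded curvature does not follow from your dichotomy.} From Theorem~\ref{proper1end} (more precisely Theorem~\ref{propos3.4}) you know the injectivity radius of $E_0$ is bounded away from zero outside a neighborhood of $\partial E_0$, but this is strictly weaker than bounded Gaussian curvature. If curvature blows up along a divergent sequence while injectivity radius stays positive, Colding--Minicozzi theory does \emph{not} force handles or a local helicoid contradiction; it produces a \emph{parking garage} limit (a foliation by parallel planes with one or two singular columns). Such a limit is perfectly compatible with embeddedness, genus zero, and a simple limit end --- indeed, Riemann minimal examples with nearly horizontal flux look exactly like two-column parking garages on large scales. In the paper this case (called (G2)) is ruled out by a flux computation (item~3 of Corollary~\ref{fluxestimates}): the vertical flux component along the forming connection loops would have to tend to zero, contradicting the fact that it is bounded below by $|(F_E)_V|>0$. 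Your argument supplies nothing to replace this, so item~5 is unproven.

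\textbf{The representative $E$ is not contained in a halfspace.} You set $E:=E_0\cap\{x_3\ge 0\}$ and then argue as if $x_3\ge 0$ on $E$. But the annular ends of the correct representative have nonpositive logarithmic growth (item~1), so they descend to $x_3=-\infty$; intersecting with $\{x_3\ge 0\}$ destroys these ends and produces a surface with noncompact boundary. The paper instead chooses $\partial E$ to be a single convex planar curve $\widehat{\Gamma}(0)$ in a horizontal plane, obtained from the blow-up analysis (Lemma~\ref{lemma4.2}), and $E$ is the subdomain of $E_0$ on the limit-end side of this curve --- a subdomain that is \emph{not} contained in a halfspace. Consequently your maximum-principle argument that $E\cap\{x_3=0\}=\partial E$ fails, your proof that $h>0$ via ``$h=0$ forces finite topology'' is unjustified (the paper uses the L\'opez--Ros deformation applied to the convex-boundary subdomains $\Omega_n$), and your item~6 argument collapses: you cannot invoke the halfspace parabolicity criterion for $E$ itself. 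In the paper, the conformal type is obtained not from abstract parabolicity but from the structural decomposition of $E$ into infinitely many annular ``Riemann-type'' or ``catenoid-type'' pieces, together with a coarea/Cauchy--Schwarz area estimate (Proposition~\ref{propos4.11}) that rules out the annulus conformal model.

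In short, the paper's route is: blow up on the scale of topology to get a limit $M_\infty$ that is a catenoid or a Riemann minimal example (ruling out parking garages by flux), use this to build an exhaustion of $E$ by explicit model pieces (Sections~\ref{sec4.2} and~\ref{sec4.3}), and then read off items~3--6 from that decomposition. Your sketch tries to reach bounded curvature first and deduce the rest; that order cannot work without the flux argument, and the halfspace shortcut is geometrically incorrect.
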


\begin{remark}
{\rm
If $M\subset \R^3$ is a properly embedded minimal surface of
finite genus and infinite topology, then $M$ has exactly two limit
ends $e_{-\infty },e_{\infty }$ which are simple limit ends of genus
zero, and which admit representatives $E_{-\infty },E_{\infty }$
that satisfy the conclusions of Theorem~\ref{thm1.3},
see~\cite{mpr3,mpr4}.
In this case where $M$ has no boundary, then
Theorem~8.1 in~\cite{mpr6} implies that the asymptotic behavior
of each of its two limit ends can be
described by the geometry of the ends of a Riemann minimal example.
}
\end{remark}

Crucial ingredients in the proof of Theorem~\ref{thm1.3} are the Limit Lamination Closure Theorem
(Theorem~1 in~\cite{mr13}),  the Local Picture Theorem on the Scale of Topology
(Theorem~1.1 in~\cite{mpr14}) 
and Theorem~2.2 in~\cite{mpr8} on the structure of certain possibly
singular lamination limits of certain sequences of minimal surfaces in $\rth$.
These ingredients, as well as many arguments in this paper, rely heavily on Colding-Minicozzi theory.

Theorems~\ref{thm1.2} and~\ref{thm1.3} not only play an important theoretical role in
our strategy  to prove Conjecture~\ref{conj},
but they also have important consequences for properly embedded minimal
surfaces, such as the one given in the next corollary; this corollary
follows from the more general result Corollary~\ref{corolnew2}.

\begin{corollary}
\label{corolnew}
If $M\subset \R^3$ is a properly embedded minimal
surface with a limit end of genus zero,
then $M$ is recurrent for Brownian motion.
\end{corollary}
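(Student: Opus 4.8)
The plan is to localize the problem at infinity, peel off the genus-zero limit end as a parabolic piece using Theorem~\ref{thm1.3}, and then control what remains. Recall that for a complete Riemannian surface $\Sigma$ with (possibly empty) compact boundary, being \emph{recurrent} for Brownian motion is the same as being \emph{parabolic}, and that parabolicity is detected at infinity: $\Sigma$ is parabolic if and only if, for one and hence every smooth compact domain $K$ with $\partial\Sigma\subset\Int(K)$, every connected component of $\Sigma\setminus K$ is parabolic as a surface with compact boundary (equivalently, has full harmonic measure along that boundary). So it is enough to produce a smooth compact $K\subset M$ containing $\partial M$ such that each component of $M\setminus K$ is parabolic.

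First I would reduce to a simple limit end of genus zero. Since $M$ is properly embedded in $\rth$ with compact boundary, Collin--Kusner--Meeks--Rosenberg~\cite{ckmr1} shows $M$ has at most two limit ends; in particular each of its limit ends is isolated among the limit ends, hence is a \emph{simple} limit end. Thus the given limit end $\mathbf e$ of genus zero is a simple limit end of genus zero, and Theorem~\ref{thm1.3} applies: after a rigid motion and a homothety there is a properly embedded representative $E\subset\Int(M)$ of $\mathbf e$ with compact boundary (a convex curve in $\{x_3=0\}$), genus zero, a single limit end, flux $(h,0,1)$ with $h>0$, bounded Gaussian curvature, and --- this is the decisive output, item~6 --- conformally the closed punctured unit disk with a sequence of interior points deleted, so that $\partial E$ has full harmonic measure; i.e.\ $E$ is parabolic. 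Put $N:=\overline{M\setminus E}$, a properly embedded minimal surface with compact boundary $\partial N=\partial M\cup\partial E$. Since (for a small enough choice of $E$) the only limit end lying in $E$ is $\mathbf e$, the surface $N$ has at most one limit end, and if it has one, then $M$ has exactly two limit ends.

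Next I would treat $N$ by iterating the construction. If $N$ has a limit end $\mathbf e'$, it should again be a simple limit end of genus zero, so that Theorem~\ref{thm1.3} yields a parabolic representative $E'$; removing it leaves $N':=\overline{N\setminus E'}$, with compact boundary and no limit ends. Either way one is reduced to proving that a properly embedded minimal surface $N'$ with compact boundary and \emph{no} limit ends is parabolic. When $N'$ has finite topology this is classical: by the asymptotic description recalled in the Introduction (\cite{bb1,col1,mpe3,mr8}), each end of $N'$ is an annulus asymptotic to a plane, a catenoid or a helicoid, hence conformally a punctured disk, so $\partial N'$ has full harmonic measure (see~\cite{mpe3}); the gluing principle then assembles $M=E\cup\dots\cup E'\cup N'$ (finitely many pieces joined along the circles $\partial E,\partial E',\dots$) into a parabolic, hence recurrent, surface. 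As a consistency check: when $\partial M=\emptyset$ and $M$ has two limit ends the conclusion is already contained in~\cite{ckmr1}, while if $M$ has a single limit end then $N$ is automatically compact and the argument closes with no further input --- matching items~3-D and 3-E of Theorem~\ref{thm1.2}.

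The main obstacle is precisely what was glossed over above: one must show that the presence of the genus-zero limit end $\mathbf e$ forces the remaining ends of $M$ to be of this controlled type --- in effect, that $M$ has finite genus (so the pieces $N$, $N'$ genuinely have finite topology) and that a second limit end, if any, is likewise a simple limit end of genus zero. This is the nonformal heart of the matter, and it is where the deeper results of the paper come in: the Limit Lamination Closure Theorem of~\cite{mr13}, the Local Picture Theorem on the Scale of Topology of~\cite{mpr14}, the structure theorem of~\cite{mpr8}, together with flux balancing on $N$, the Ordering Theorem of~\cite{fme2} (which forces $\mathbf e$ and $\mathbf e'$ to be the top and bottom ends of $M$), and the identification $E\cong\mathcal{R}_+$ of Theorem~\ref{thm1.3}(4) --- all built on Colding--Minicozzi theory. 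Assembling these into the statement that a genus-zero limit end cannot coexist with an end of uncontrolled (positive or infinite) genus is exactly the substance of the more general Corollary~\ref{corolnew2}, from which Corollary~\ref{corolnew} follows.
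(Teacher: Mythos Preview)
Your decomposition $M=E\cup N$ with $E$ parabolic (via item~6 of Theorem~\ref{thm1.3}) is correct and is exactly how the paper begins. The gap is in how you handle $N=\overline{M\setminus E}$. Your iteration strategy requires showing that a second limit end of $M$, if present, also has genus zero, and that the leftover piece $N'$ has finite topology; you correctly flag this as the obstacle, but then misidentify its resolution. Corollary~\ref{corolnew2} does \emph{not} prove that $M$ has finite genus or that the other limit end is genus zero---indeed the corollary applies to surfaces $M$ of arbitrary (possibly infinite) genus, provided just one limit end has genus zero.

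The paper's actual mechanism for $N$ is different and bypasses the topology of $N$ entirely. Using the explicit geometric description of $E$ (items~3--4 of Theorem~\ref{thm1.3}), one finds two horizontal disks $D_1,D_2$ with $\partial D_i\subset E$ and $\Int(D_i)\cap E=\emptyset$, lying on opposite sides of $E$; the barrier/least-area argument from the proof of item~1 of Theorem~\ref{thm1.2} (stable surface trapped between two top limit ends, catenoidal end of positive growth, monotonicity contradiction) then forces $M-E$ to be disjoint from the mean-convex regions $X_1,X_2$ bounded by $E\cup D_i$. In particular $M-\Int(E)$ lies in a closed halfspace $\{x_3\le c\}$. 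Now Theorem~3.1 of~\cite{ckmr1} applies directly: a properly embedded minimal surface with compact boundary contained in a closed halfspace is parabolic. No control on the genus or the number of ends of $N$ is needed. Gluing the parabolic pieces $E$ and $M-\Int(E)$ along their common compact boundary gives recurrence of $M$.
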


Some of the results in this paper were announced by the authors at a conference in Paris in 2004.
Our proofs use results of Colding-Minicozzi theory that led us to develop a detailed study of minimal 
laminations with singularities and subsequent applications. The present paper can be considered 
as a culmination of a long term project in the understanding of complete embedded minimal surfaces of
finite genus in $\R^3$.

\section{Preliminaries on the  ends of a noncompact surface.}
\label{secprelim}
Given $p\in \R^3$ and $R>0$, we denote by $\B
(p,R)$ the open ball centered at $p$ of radius $R$. When $p=\vec{0}$, we let
$\B (R)=\B (\vec{0},R)$. If $\Sigma
\subset \R^3$ is a surface and $p\in \Sigma $, then $K_{\Sigma
},d_{\Sigma }, I_{\Sigma }, B_{\Sigma }(p,R)$ and $T_p\Sigma $
respectively stand for the Gaussian curvature function of $\Sigma $,
its intrinsic distance function, its injectivity radius function,
the intrinsic ball centered at $p$ of radius $R>0$ and the tangent plane to $\Sigma $ at $p$.
Also, $\overline{\D}=\{ z\in \C \ : \ |z| \leq 1\}$ stands for the closed unit disk.

{\bf Throughout the paper, $M\subset \R^3$ will denote a connected, complete
embedded minimal surface with compact boundary (possibly empty).}

We next recall the notion of end of $M$. Consider the set
\[
{\mathcal A}=\{ \a \colon [0,\infty )\to M\ \mid \ \a \mbox{
 is a proper arc}\} .
 \]
In ${\cal A}$, we define the equivalence relation $\a _1\sim \a _2$
if for every compact set $C \subset M$,  $\a
_1,\a _2$ lie eventually (outside a compact subset of the parameter domain $[0,\infty )$)
in
the same component of $M-C$.

\begin{definition}
\label{defend}
{\rm Each equivalence class in ${\mathcal E}(M)= {\mathcal A}/_\sim
$ is called an {\it end} of $M$. If ${\bf e}\in {\mathcal E}(M)$, $\a \in
{\bf e}$ is a  proper arc and $E\subset M$ is a proper connected
subdomain with  compact boundary such that $\alpha ([t_0,\infty ))\subset E$ for
some $t_0\geq 0$, then we
say that $E$ {\it represents} the end ${\bf e}$.}
\end{definition}

The space
${\mathcal E}(M)$ has the following natural Hausdorff topology.
  For each proper subdomain
$E\subset M$ with compact boundary, we define the basis open
set $B(E) \subset {\mathcal E} (M)$ to be those equivalence
classes in ${\mathcal E}(M)$ which have representative proper arcs contained in
$E$. With this topology, ${\mathcal E}(M)$ is a
 totally disconnected compact space which embeds topologically
 as a subspace of $[0,1]\subset \R $ (see pages 288-289 of \cite{mpe1} for a proof of this property).
In the sequel, we will
view ${\cal E}(M)$ as a subset of $[0,1]$ endowed with the induced metric topology.

Note that every {\it simple end} ${\bf x}$ of $M$ (i.e., ${\bf x}$ is an isolated point of ${\cal E}(M)$)
with genus zero can be represented by a proper annulus $E_{\bf x} \subset M$ which is homeomorphic to
$\esf^1 \times [0, \infty)$.

Next consider a {\em simple limit end}  ${\bf e}$ of $M$, i.e., there exists a
neighborhood $O({\bf e}) \subset {\cal E}(M)$ such that $ O({\bf e}) - \{ {\bf e} \}$ consists of
simple ends and ${\bf e}$ is a limit point of a sequence of
simple ends $\{ {\bf x}_n \}_n
\subset O({\bf e}) - \{ {\bf e} \}$. Suppose that the  simple limit end ${\bf e}$ has genus zero,
i.e., ${\bf e}$ admits a representative of genus zero. By the
 classification of genus-zero surfaces and after a possible replacement by a smaller
neighborhood $O({\bf e})$ of ${\bf e}$ in
${\cal E}(M)$, there
  exists a proper subdomain $E$ of $M$ satisfying:
\begin{enumerate}
\item[(A1)] $E$ is diffeomorphic to $\overline{\D}(\ast)=\overline{\D}
- [\left\{ 0\} \cup \{ \frac{1}{2n}\right\} _{n\in \N}]$. Furthermore,
$\partial E\cap \partial M=\mbox{\O }$.
\item[(A2)] $E$ represents all the ends in $O({\bf e})$, and the equivalence
class under $\sim $ of every proper arc
in $E$ represents a unique end in $O({\bf e})$.
\end{enumerate}

\section{Simple limit ends of genus zero can be represented by
properly embedded surfaces.}
\label{sec3}

We begin this section with several key notions that are closely tied
to obtaining properness results for minimal surfaces, including   Theorem~\ref{proper1end}
which will be proved here.

\begin{definition}
\label{defLSC}
{\rm
\begin{enumerate}[1.]
\item An embedded surface with boundary (possibly empty)
$\Sigma \subset \R^3$ is said to be {\it locally simply connected in $\rth$} if
 for every $p\in \R^3$, there exists $r=r(p)>0$
such that 
the closure of each  component of $\Sigma\cap \B (p,r)$ that is
disjoint from $\partial \Sigma$, is a compact disk with boundary in
$\partial \B (p,r)$. $\Sigma $ has {\it locally positive injectivity radius
away from $\partial \Sigma $} if
 for every $p\in \R^3$, there exists $r=r(p)>0$
such that 
the injectivity radius function $I_{\Sigma }$ of $\Sigma$ is bounded away from zero
on the union of  the components of $\Sigma\cap \overline{\B} (p,r)$ that are
disjoint from $\partial \Sigma$.

\item Let $A\subset \R^3$ be an open set and $\{ \Sigma _n\} _{n\in \N}\subset \R^3$
be a sequence of embedded surfaces
 (possibly with boundary). The sequence $\{ \Sigma_n\}_n$ is called
{\it locally simply connected} in $A$ if
for every $p\in A$, there exists $r=r(p)>0$ such that
$\B (p,r)\subset A$ and for $n$ sufficiently large,  $\B (p,r)$ intersects $\Sigma _n$
in components that are disks with boundaries in $\partial \B (p,r)$.
$\{ \Sigma _n\} _n$ is said to have
{\em locally positive injectivity radius in $A$,} if for every
$p\in A$, there exists $\ve _p>0$ and $n_p\in \N $ such that for
$n>n_p$, the restricted functions $(I_{\Sigma _n})|_{\Sigma _n\cap \B (p,\ve_p)}$ are
uniformly bounded away from zero.
\end{enumerate}
}
\end{definition}

\begin{remark}
\label{remark3.2}
{\rm
With the notation of item 2 of Definition~\ref{defLSC},
if the surfaces $\Sigma _n$ have nonempty boundaries and $\{ \Sigma _n\} _n$ has locally
positive injectivity radius in $A$, then for any $p\in A$
there exists $\varepsilon _p>0$ and $n_p\in \N$ such that $\partial
\Sigma _n\cap \B (p,\ve _p)=\mbox{\rm \O}$ for  $n>n_p$, i.e., points in the boundary
of $\Sigma_n$ must eventually diverge in space or converge to a subset
of~$\R^3-A$.
}
\end{remark}

By Proposition~1.1 in~\cite{cm35}, if $M\subset \R^3$
is an embedded minimal surface, then the property that  $M$ is locally simply
connected in $\R^3$ is equivalent to the property
that $M$ has locally positive injectivity radius away from
$\partial M$. The same proposition gives that a sequence
 of embedded minimal surfaces $\{M_n\}_n$ has locally positive
injectivity radius in an open set $A\subset \R^3$ if and only if $\{M_n\}_n$ is
locally simply connected in $A$.

Theorem~2 in~\cite{mr13} implies that if an  embedded, complete,
nonflat minimal surface in $\rth$ (with empty boundary) has positive injectivity
radius, then it is proper. Although not stated explicitly in~\cite{mr13},
the following result is an immediate consequence of the proof of Theorem~2
in~\cite{mr13} and other arguments therein.

\begin{theorem}
\label{thmmr}
Let $M\subset \R^3$ be  a complete, connected, 
embedded minimal surface with compact boundary.
If the injectivity radius function $I_M$ of $M$ is bounded away from zero outside of 
some intrinsic $\ve$-neighborhood  of $\partial M$, then $M$ is proper in $\rth$.
Furthermore, if $M$ has finite topology, then
$I_M$ is bounded away from zero outside of some intrinsic
$\ve$-neighborhood of
$\partial M$, and so, $M$ is proper in $\rth$.
\end{theorem}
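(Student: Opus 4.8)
The plan is to deduce Theorem~\ref{thmmr} from the machinery already developed in~\cite{mr13} for the boundaryless case, together with standard Colding--Minicozzi curvature estimates near a compact boundary. First I would observe that the only obstruction to applying the Minimal Lamination Closure Theorem (Theorem~1 in~\cite{mr13}) directly is the presence of $\partial M$; since $\partial M$ is compact, it lies in some ball $\B(R_0)$, and the hypothesis gives a positive lower bound $i_0>0$ for $I_M$ on $M$ minus the intrinsic $\ve$-neighborhood $N_\ve(\partial M)$ of $\partial M$. The strategy is to show $M$ is proper by contradiction: if $M$ were not proper, there would be a point $p_\infty\in\R^3$ and a divergent sequence $p_n\in M$ with $p_n\to p_\infty$ intrinsically divergent but extrinsically bounded. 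Choosing $R$ large enough that $p_\infty$ and all but finitely many $p_n$ lie in $\B(R)$ and $\B(R)\supset \B(R_0)\supset\partial M$, the points $p_n$ eventually leave $N_\ve(\partial M)$ (they diverge intrinsically while $\partial M$ is compact), so $M$ has injectivity radius bounded below by $i_0$ near each $p_n$.

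The key step is then to run the local-picture/limit-lamination argument of~\cite{mr13} at the scale where $M$ fails to be proper. Concretely, I would pass to a subsequence so that the components $\Sigma_n$ of $M\cap \B(p_\infty,\rho)$ through $p_n$ (for suitable fixed small $\rho$) form a sequence that is locally simply connected in $\B(p_\infty,\rho)$: this follows because positive injectivity radius away from $\partial M$ is equivalent to being locally simply connected away from $\partial M$ by Proposition~1.1 in~\cite{cm35}, and $\partial M$ has been pushed out of the relevant region. Then the arguments in the proof of Theorem~2 of~\cite{mr13} apply verbatim to produce a minimal lamination of a neighborhood of $p_\infty$ as a limit of (subsets of) $M$, with $M$ accumulating to a leaf $L$ of this lamination. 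The positive injectivity radius bound persists in the limit, forcing $L$ to be stable or to have bounded curvature in a way incompatible with $M$ being non-proper near $p_\infty$; this is exactly the contradiction extracted in~\cite{mr13}, and the compact-boundary modification changes nothing because everything happens in the interior, far from $\partial M$. Hence $M$ is proper in $\R^3$.

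For the second assertion, suppose $M$ has finite topology. Then I would invoke the structure theory cited in the introduction: a complete embedded minimal surface of finite topology with compact boundary has a finite number of ends, each of which (by Collin~\cite{col1}, Meeks--P\'erez~\cite{mpe3}, Meeks--Rosenberg~\cite{mr8}, and Bernstein--Breiner~\cite{bb1}) is asymptotic to an end of a plane, a catenoid, or a helicoid, or one of the few extra boundary-type ends in~\cite{mpe3}. Each such model end has injectivity radius function bounded away from zero outside a compact piece, so $M$ itself has injectivity radius bounded below outside a compact neighborhood of $\partial M$ union the necks of the finitely many ends; intersecting with an intrinsic $\ve$-neighborhood of $\partial M$ and enlarging $\ve$ if necessary absorbs the compact part, giving the claimed uniform lower bound for $I_M$. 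Then the first part of the theorem gives properness. The main obstacle is making the first step fully rigorous: one must check that the local-picture and lamination-limit arguments of~\cite{mr13}, which were written for surfaces without boundary, genuinely localize away from $\partial M$; this is why the hypothesis is phrased with an intrinsic (rather than extrinsic) $\ve$-neighborhood of $\partial M$, ensuring that intrinsically divergent sequences are eventually uniformly interior, and it is the point where one must cite "the proof of Theorem~2 in~\cite{mr13} and other arguments therein" rather than its statement.
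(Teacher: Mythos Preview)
The paper does not supply its own proof of this theorem; it simply asserts that the statement is an immediate consequence of the proof of Theorem~2 in~\cite{mr13} and other arguments therein. Your first paragraph, outlining how to localize the lamination-limit argument of~\cite{mr13} away from the compact boundary, is exactly the reduction the paper has in mind and is correct in spirit.

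Your second paragraph, however, contains a genuine circularity. You propose to deduce the injectivity-radius lower bound for finite-topology $M$ from the asymptotic end structure given by Collin~\cite{col1}, Meeks--P\'erez~\cite{mpe3}, Meeks--Rosenberg~\cite{mr8}, and Bernstein--Breiner~\cite{bb1}. But each of those results takes as hypothesis that the surface (or at least the annular end in question) is \emph{properly} embedded; Collin's theorem, for instance, is a statement about properly embedded minimal annular ends. Since properness of $M$ is precisely what you are trying to establish, you cannot invoke those asymptotic descriptions at this stage. Indeed, the passage in the introduction you are citing begins with ``These results,'' where the antecedent includes Colding--Minicozzi's properness theorem itself, so the logical order there is: finite topology $\Rightarrow$ proper (by~\cite{cm35}) $\Rightarrow$ asymptotic structure (by~\cite{col1}, etc.).

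The non-circular route---implicit in the paper's reference to~\cite{mr13} and in the sentence just preceding the theorem---is that the positive lower bound on $I_M$ for complete embedded finite-topology minimal surfaces comes directly from Colding--Minicozzi theory (the chord-arc and one-sided curvature estimates and the structure theorems for embedded minimal disks in~\cite{cm23,cm35}), \emph{before} one knows anything about the global asymptotics of the ends. Only after properness has been established do Collin's theorem and its companions become available.
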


\begin{remark}
\label{properness-rem}
{\em Theorem~\ref{propos3.4} below
implies the main properness statement in Theorem~\ref{thm1.3}.
It also
implies Theorem~\ref{proper1end} by the following reasoning.  Suppose that
$M\subset \rth$ is a complete embedded minimal surface
of finite genus with compact
boundary and exactly one limit
end  ${\bf e}$, which must therefore be a simple limit end.
If  $E$ is the proper representative
of  ${\bf e}$  given in the next
theorem, then the surface $ M-\Int(E)$ has finite topology and
must therefore be proper by Theorem~\ref{thmmr};
hence, $M=E\cup ( M-\Int(E))$ is also proper in $\rth$. }
\end{remark}

\begin{theorem}
\label{propos3.4}
Suppose that
$M\subset \rth$ is a complete embedded minimal surface
with possibly empty compact
boundary. 
Every simple limit end ${\bf e}\in {\cal E}(M)$ of genus zero
can be represented by a subdomain $E\subset M$ with compact boundary whose injectivity
radius is bounded away from zero outside each compact neighborhood of its boundary. In
particular, $E$ is proper in $\R^3$.
\end{theorem}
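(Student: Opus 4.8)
The plan is to start from the topological representative $E_0$ of the simple limit end $\mathbf e$ furnished by properties (A1)–(A2) in Section~\ref{secprelim}, so that $E_0$ is diffeomorphic to $\overline{\mathbb D}(\ast)$, has compact boundary disjoint from $\partial M$, and every proper arc in $E_0$ represents an end in the chosen neighborhood $O(\mathbf e)$. The goal is to promote this to a representative $E\subset E_0$ whose injectivity radius function $I_E$ is bounded away from zero on the complement of any compact neighborhood of $\partial E$; once this is done, Theorem~\ref{thmmr} applied to $E$ (which is a complete embedded minimal surface with compact boundary) gives that $E$ is proper in $\R^3$, which is exactly the last assertion. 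So the entire content is the injectivity radius estimate near the limit end.

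The strategy for the injectivity radius bound is a proof by contradiction combined with Colding--Minicozzi-type compactness. Suppose no sub-representative of $\mathbf e$ has the desired property. Then one can extract a sequence of points $p_n\in E_0$ diverging to the limit end (leaving every compact subset of $M$, but staying a fixed intrinsic distance away from $\partial M$) with $I_M(p_n)\to 0$. Rescale: set $\lambda_n = 1/I_M(p_n)\to\infty$ and consider the translated and homothetically expanded surfaces $M_n = \lambda_n (M-p_n)$, which are complete embedded minimal surfaces with $I_{M_n}(\vec 0)=1$ and with boundary $\partial M_n$ diverging to infinity. Here I would invoke the Limit Lamination Closure Theorem (Theorem~1 in~\cite{mr13}) together with the Local Picture Theorem on the Scale of Topology (Theorem~1.1 in~\cite{mpr14}) and Theorem~2.2 in~\cite{mpr8}: these control the possible limit objects of $\{M_n\}_n$. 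The finite-genus hypothesis on $M$ is crucial here — since $\mathbf e$ has genus zero, the rescaled surfaces are eventually genus zero in large balls, so the local pictures that can arise in the limit are severely constrained (planes, helicoids, catenoids, or the singular limits classified in~\cite{mpr8}). One then argues that each such limit picture is incompatible with the hypothesis that the points $p_n$ represent a \emph{simple} limit end of $M$: a nontrivial limit lamination with a leaf of positive genus or with a multivalued-graph (helicoidal) local picture would force either nonzero genus accumulating at $\mathbf e$ or a failure of the local-simply-connectedness that a simple end representative must have, contradicting (A1)–(A2).

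Concretely, the key steps in order are: (i) fix the topological representative $E_0$ from (A1)–(A2); (ii) assume for contradiction that $I_M$ is not bounded away from zero on $E_0$ minus any compact neighborhood of $\partial E_0$, producing the divergent sequence $p_n$ with $I_M(p_n)\to 0$; (iii) perform the blow-up rescaling $M_n=\lambda_n(M-p_n)$ and apply the Limit Lamination Closure Theorem and the Local Picture Theorem on the Scale of Topology to pass to a subsequential limit, a (possibly singular) minimal lamination $\cL$ of $\R^3$ with a leaf passing through $\vec 0$ of injectivity radius $1$; (iv) use Theorem~2.2 in~\cite{mpr8} to classify the limit and, using that the genus is zero near $\mathbf e$, rule out all cases, deriving either a contradiction with the curvature estimates for genus-zero pieces or with the fact that $E_0$ is an end representative of a \emph{simple} (hence, in large extrinsic balls, eventually simply connected) limit end; (v) conclude $I_M$ is bounded away from zero on $E:=E_0$ outside compact neighborhoods of $\partial E$ — possibly after first shrinking $E_0$ to a smaller sub-representative to absorb the finitely many bad scales near $\partial E_0$ — and then apply Theorem~\ref{thmmr} to get properness of $E$.

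The main obstacle I expect is step (iv): carefully carrying out the case analysis of the limit lamination $\cL$ and showing that \emph{every} possible local picture at the blow-up scale contradicts the simple-limit-end structure. This is where the genus-zero hypothesis, the embeddedness, and the fine structure theory of singular minimal laminations (Colding--Minicozzi theory, \cite{mpr8}, \cite{mpr14}) all have to be combined delicately — in particular, ruling out the helicoidal/multivalued-graph picture requires knowing that such a local picture near a limit end would inject nontrivial topology into $E_0$, contradicting that arcs in $E_0$ detect only the prescribed simple ends. A secondary technical point is making precise the phrase ``outside each compact neighborhood of its boundary'': one must ensure the bad set of small injectivity radius can only accumulate at $\partial E$ or at the limit end, and the limit-end possibility is exactly what the blow-up argument excludes, while the $\partial E$ possibility is handled by passing to a slightly smaller representative.
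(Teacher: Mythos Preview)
Your framework in steps (i)--(iii) matches the paper: start from the (A1)--(A2) representative, assume for contradiction that the injectivity radius collapses along a divergent sequence, and blow up via the Local Picture Theorem on the Scale of Topology. The paper arrives at exactly the dichotomy you would expect: either (C3) a smooth genus-zero limit $M_\infty$ (hence a catenoid or a Riemann minimal example), or (C4) a two-column parking garage.

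The genuine gap is in step (iv). Your proposed mechanism for the contradiction --- that a nontrivial local picture would ``inject nontrivial topology'' into $E_0$ or violate that a simple limit end is ``in large extrinsic balls, eventually simply connected'' --- does not work. A simple limit end of genus zero has \emph{infinite} topology (infinitely many annular ends accumulating to the limit end); it is never simply connected in large balls, and the phrase ``simple limit end'' refers to $\mathbf e$ being an isolated point of the derived set of $\mathcal E(M)$, not to simple connectivity. The catenoid, the Riemann minimal example, and the two-column parking garage are all perfectly compatible, both topologically and as local models, with the genus-zero planar-domain structure of $E_0$. Nothing topological rules them out.

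What actually kills each case is geometric and analytic, and it is the bulk of the paper's Section~\ref{sec3}. First one shows (Lemma~\ref{flux}) that the approximating necks $\gamma_n\subset E$ have \emph{vertical} flux, using that they are either homologous to sums of annular ends (which have vertical flux by (B1)) or to $\partial E$ plus such ends, with the residual flux along $\gamma_{n+k}$ tending to zero. The Riemann case (Proposition~\ref{2limit}) is then excluded by a barrier construction: one traps a piece of $E$ between planar curves, builds a region $R_n$ with mean-convex boundary, and produces a noncompact stable least-area surface $S_n(\infty)$ whose Gauss map, after rescaling and applying the Local Removable Singularity Theorem, is forced to point near the (nonhorizontal) limit tangent plane of the Riemann example --- contradicting that the annular ends of $E$ have vertical limiting normals. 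The parking garage case is handled by the same mechanism. The catenoid case (Proposition~\ref{cat}) is the longest: the convex waist circles $\gamma_n$ fall into three homological configurations (D1)--(D3), and each is eliminated separately using the L\'opez--Ros deformation, unstable-annulus barrier arguments (Assertions~\ref{asser3.11}, \ref{asser3.11new}), a lamination/removable-singularity analysis for the nested regions $W_n$, and a separation argument with graphical topological planes $P_n$. None of this follows from the simple-limit-end hypothesis alone; you should expect step (iv) to be where essentially all the work lies.
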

\begin{proof}
Let ${\bf e}\in {\cal E}(M)$ be a simple limit end of genus zero. Consider a proper
subdomain $E\subset M$ satisfying properties (A1) and (A2) stated in the preliminaries section.
With a slight abuse of notation, we identify
 $E$ with the parameter domain $\overline{\D}(\ast)=
\overline{\D}- [\left\{ 0\} \cup \{ \frac{1}{2n}\right\} _{n\in \N}]$, see property (A1).
The proof of Theorem~\ref{propos3.4} will be divided into several statements; more
precisely, Lemmas~\ref{flux} and~\ref{pg1} and Propositions~\ref{2limit} and~\ref{cat}.
As the proof develops, we will replace $E$ by similar proper subdomains of $E$ and
$O({\bf e})$ by the open subset of ends of the replaced $E$,  but will continue  to label these objects
by the same letters.

We first deal with the (simple) annular ends in $E$.
For $n \in \N$, let $\esf^{1}_{n}$ denote the
circle of center $0\in \C$ and radius $\frac{1}
{2n+1}$. Let $E_n$ be the proper subdomain of $E$ bounded by $\partial E\cup
\esf^{1}_{n}$.
 Since $E_n$ has finite topology and compact boundary,
 then Theorem~\ref{thmmr} applied to $E_n$ insures that $E_n$ is proper in $\R^3$.
As each of the (finitely many) ends of $E_n$ is an annular end, then
 Collin's theorem~\cite{col1} implies that each end of $E_n$ has
finite total curvature and is asymptotic to an
 end of a plane or catenoid. After a
rigid motion in $\R^3$, we may assume that:
\begin{enumerate}[(B1)]
\item \label{B1}
The annular ends of $E$ are represented by graphs over their projections to
 $\{ x_3=0\} $ with logarithmic growth (which is zero when the end is
asymptotic to the end of a plane).
\end{enumerate}

The proof of Theorem~\ref{propos3.4} is by contradiction. Hence {\bf assume there is no
end representative $E$ of ${\bf e}$ which is a proper surface}.
By Theorem~\ref{thmmr}, the injectivity radius function $I_E$ of every such
a representative has the property that $I_E$ fails to be bounded away from zero
outside some small $\ve$-neighborhood of $\partial E$.
Therefore, there exists a sequence of points $q_n\in E$ such that
$d_E(q_n,\partial E)$ is bounded away from zero and $I_E(q_n)\to 0$
as $n\to \infty $. Clearly, the $q_n$ diverge in $E$. As $I_E$ becomes unbounded
when approaching each of the simple ends of $E$, we deduce that the $q_n$ converge
to the origin when viewed inside $\ov{\D}(*)$.

Since $E$ has genus zero, then
the Local Picture Theorem on the Scale of Topology
(see Theorem~1.1, Proposition~4.20 and Remark~4.32 in~\cite{mpr14})
implies that we can find a divergent sequence of points $p_n\in E$ (called {\it points of almost minimal
injectivity radius for $E$}) and positive numbers $\ve _n \to 0$, such that
$d_E(p_n,q_n)\to 0$ as $n\to \infty $ and:
\begin{enumerate}[(C1)]
\item \label{D1}
 The closure $M_n$ of the component of $\B (p_n,\ve _n)\cap E$ that contains $p_n$ is compact with
boundary $\partial M_n\subset \partial \B (p_n,\ve _n)$. Furthermore, $M_n$ is disjoint from $\partial E$
for $n$ large enough (this follows from the fact that $p_n$ is divergent in $E$).

\item  \label{D2}
Let $\l_n=1/I_{M_n}(p_n)$, where $I_{M_n}$ denotes the injectivity radius function of
$E$ restricted to $M_n$. Then, $\l _nI_{M_n}\geq 1-\frac{1}{n}$ in $M_n$
and $\l _n\ve _n \to \infty$.
\end{enumerate}
Furthermore, exactly one of the following two cases occurs after extracting a subsequence.
\begin{enumerate}
\item[(C3)] \label{D3}
The surfaces $\l_n(M_n -p_n)$ have
uniformly bounded Gaussian curvature
on compact subsets of $\R^3$. In this case, there exists a connected, properly
embedded minimal surface $M_{\infty}\subset \R^3$
with $\vec{0}\in M_{\infty }$, $I_{M_{\infty}}\geq 1$
and $I_{M_{\infty}}(\vec{0})=1$,
 such that for any
$k \in \N$, the surfaces  $\l_n(M_n -p_n)$ converge $C^k$
on compact subsets of $\rth$  to $M_{\infty}$ with
multiplicity one as $n \to \infty$.

\item[(C4)] \label{D4}
After possibly a rotation in $\R^3$, the surfaces
$\l_n(M_n -p_n)$ converge to a minimal parking
garage structure\footnote{
We refer the reader to Section~3 in~\cite{mpr14}
for the definition of parking garage structure of $\R^3$.} of
$\R^3$ consisting of a foliation ${\cal F}$ of $\R^3$ by horizontal planes, with
two columns $l_1,l_2$
such that the associated highly sheeted, double multivalued graphs forming
in  $\l_n(M_n -p_n)$ around $l_1,l_2$
for $n$ sufficiently large, are oppositely handed. Furthermore, after relabeling,
$l_1$ intersects $\overline{\B}(1)$ and $l_2$ is at distance 1 from $l_1$.
\end{enumerate}

In order to finish the proof of Theorem~\ref{propos3.4}, we must find a contradiction
in each of the Cases (C3), (C4) above.

Suppose first that Case~(C3) holds. As the $\l_n(M_n -p_n)$ all have genus zero, then
$M_{\infty }$ has genus zero as well. By classification results for properly embedded minimal
surfaces of genus zero (Collin~\cite{col1}, L\'opez-Ros~\cite{lor1},
Meeks-P\'erez-Ros~\cite{mpr6}), $M_\infty$ is a catenoid or a
Riemann minimal example. Let $\gamma \subset
M_\infty$ be the waist
  circle if $M_\infty$ is a catenoid, and in the case $M_\infty$
is a Riemann minimal example, then let $\gamma $
be a simple closed planar curve (actually a circle) which separates the
  two limit ends of $M_\infty$.

\begin{remark}
\label{rem3.6}
{\rm
In the sequel, we will need the notion of {\em flux vector} of a minimal surface along a closed curve $\G $ once we have
chosen a unit conormal vector $\eta $  along $\G $; this flux is the vector in $\R^3$ given by the
integral of $\eta $ along $\G $, which clearly is defined up to a sign. This ambiguity still lets us make sense of when this
flux is nonzero, or when it is vertical.
}
\end{remark}

Since $\gamma$ has nonzero flux,  then for $n$
large, $\gamma$ is approximated
by the image by the composition of a translation by vector $-p_n$ with a homothety by $\l _n$ of a simple closed
planar curve $\gamma_n \subset M_n$
also with nonzero flux.

\begin{lemma}
\label{flux}
  $\gamma \subset M_\infty$ has vertical flux. Furthermore, after choosing a
subsequence, each curve $\gamma_n$ also has vertical flux.
\end{lemma}
\begin{proof}
It suffices to prove that for $n$ large, $\gamma_n \subset E$
has vertical flux. If the subdisk in $\overline{\D }$ bounded by $\g _n$ does not contain
$0 \in \overline{\D}$, then $\gamma_n$
is homologous to a finite number of loops around the annular ends of $E$,
and so, $\gamma_n$ has vertical flux by property (B1). Otherwise, after replacing by a subsequence,
we may assume that $\gamma_n$ is topologically
parallel to $\gamma_{n+k}$ and to $\partial E$ in $\overline{\D} - \{ 0 \}$
for $n,k \in \N$, $n$ large. Hence for any $k\in \N$, $\gamma_n$ is
homologous in $E$ to the union of $\gamma_{n+k}$ with a finite number of
loops around annular ends of $E$, and so, the flux
along $\gamma_n$ is equal to a vertical vector minus the
flux along $\gamma_{n+k}$. Since the flux along
$\gamma_{n+k}$ goes to zero as
$k \rightarrow \infty$ (because length$(\g _{n+k})\to 0$),
then  the flux of $E$ along $\gamma_n$ is vertical.
\end{proof}

\begin{proposition}
 \label{2limit}
  {$M_\infty$} is not a Riemann minimal example.
\end{proposition}
\begin{proof}
Arguing by contradiction, assume $M_{\infty }$ is a
Riemann minimal example. Since $\g \subset M_{\infty }$
has nonzero vertical flux by Lemma~\ref{flux},
then Theorem~6 in~\cite{mpr3} implies that the planar ends of $M_\infty$
are not horizontal.

Let $Q$ be the plane passing through the origin in $\rth$ that is parallel
to the planar ends of $M_\infty$ (equivalently, $Q$ is
the limit tangent plane at infinity of $M_\infty$).
Observe that planes parallel to $Q$
at heights (with respect to $Q$) different from the heights corresponding to the
planar ends of $M_\infty$, intersect $M_\infty$ transversely in simple
closed curves (actually in circles).
  Let $\Gamma _1, \Gamma _2, \Gamma _3, \Gamma _4$ be four such
circles 
on $M_\infty$, 
chosen so that the cycles $\Gamma_1 \cup \Gamma_2$,
$\Gamma_2 \cup \Gamma_3$ and $\Gamma_3 \cup \Gamma_4$ each bound a
noncompact subdomain $\Omega _{1,2},\Omega _{2,3},\Omega _{3,4}$ respectively of
$M_\infty$, each containing exactly two planar ends and such that
$\Omega _{1,2}\cap \Omega _{2,3}
=\Gamma_2$ and $\Omega _{2,3}\cap \Omega _{3,4}= \Gamma_3$,
see Figure~\ref{figure2} top. Observe that there exists a compact arc $c \colon [1,4]\to M_{\infty }$ such that
$c (i)\in \G _i$, $i=1,2,3,4$, and $c$ intersects transversely exactly once each of the curves $\G _i$.
\begin{figure}
\begin{center}
\includegraphics[width=11cm]{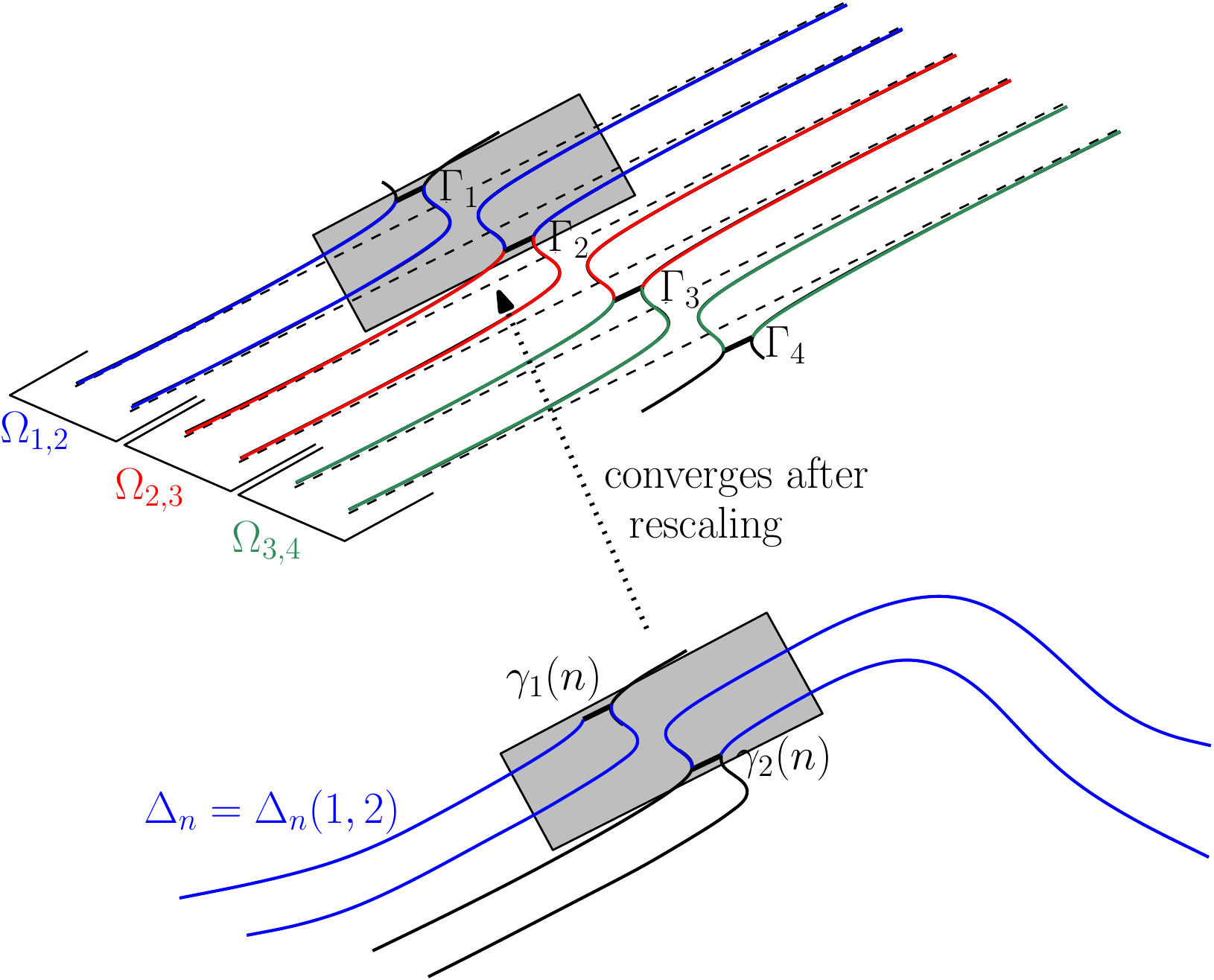}
\caption{Top: The (tilted) limit minimal example $M_{\infty }$. Below:
compact portions $\Delta_n$ of $E$ inside the shaded box which
converges after expanding to
a related compact portion of $M_{\infty }$.}
\label{figure2}
\end{center}
\end{figure}

For $n$ large, let $\gamma_1(n), \gamma_2(n), \gamma_3(n),
\gamma_4(n)$ be related simple closed planar curves in $M_n$, in the
sense that $\l _n(\g _i(n)-p_n)$ converges as $n\to \infty $ to $\G _i$,
$i=1,2,3,4$.  We can also assume that the $\g _i(n)$ are
contained in planes parallel
to $Q$. Similarly, $c$ is the limit of related compact arcs
$\l _n(c_n-p_n)$, where $c_n\colon  [1,4]\to E$ satisfies
$c_n(i)\in \g _i(n)$, $i=1,2,3,4$, and $c_n$ intersects exactly once each of the curves $\g _i(n)$.

To proceed with the proof of Proposition~\ref{2limit}, we
will need two assertions.

\begin{assertion}
\label{asser3.7}
After possibly reindexing, there is a domain
$\Delta_n \subset E$ of finite
topology such that $\partial \Delta_n = \gamma_1(n) \cup \gamma_2(n)$.
\end{assertion}
\begin{proof}
When considered to be curves in $\overline{\D }$, the closed
curves $\g _i(n)$ all separate $\overline{\D }$. Therefore,
$\overline{\D }-\cup _{i=1}^4\g _i(n)$ consists of five components. As the compact arc
$c_n([1,4])$ intersects exactly three of these five components
in open intervals of the form $c_n((j,j+1))$ ($j=1,2,3$),
then at least two of these components
are annuli disjoint from $\partial \overline{\D }$; of these two annuli, at least one,
called $A$, is disjoint from the limit end $0$ of $E$.
Hence, if we remove from $A$ the annular ends of $E$, then we obtain a planar domain
with finite topology, which we take as $\Delta _n$.
Now the assertion follows.
\end{proof}

For $i=1,2$, let $\gamma'_i(n) \subset
E- \Delta_n$ be planar curves (contained in planes parallel to
$Q$) close to and topologically parallel to $\gamma_i(n)$,
let $A_i(n) \subset E - \Delta_n$ be the open annulus with compact closure
bounded by $\gamma_i(n) \cup \gamma'_i(n)$,
and let $D_i(n), D'_i(n)\subset \R^3$ be the corresponding compact planar
disks bounded by $\gamma_i(n), \gamma'_i(n)$ respectively.
Finally, define $B_i(n)$ to be the
compact domain in $\rth$ with boundary $A_i(n) \cup D_i(n) \cup
D'_i(n)$, for $i = 1, 2$, see Figure~\ref{figure6}.
\begin{figure}
\begin{center}
\includegraphics[width=15cm]{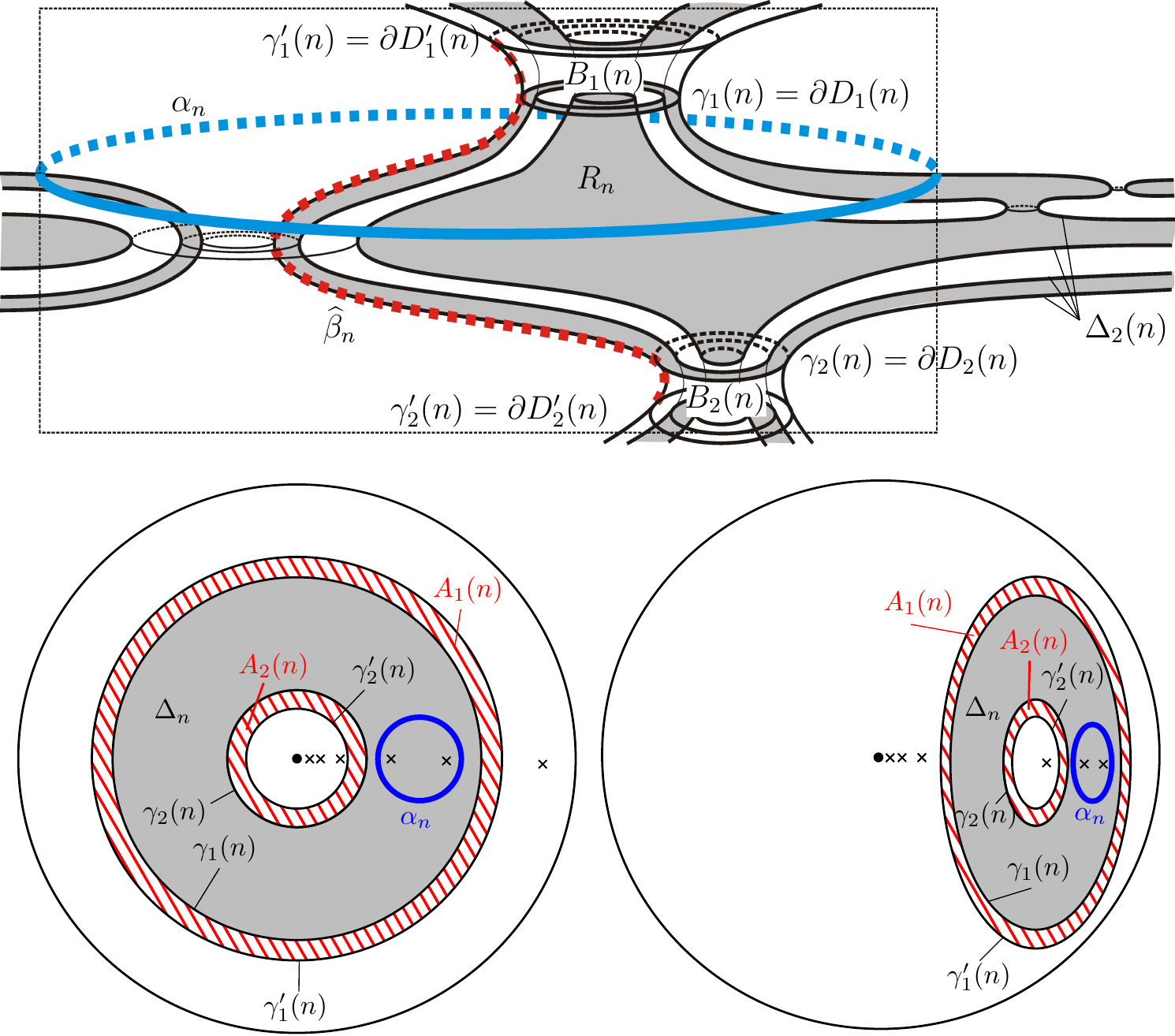}
\caption{Top: The portion of $E$ inside the dotted box
converges as $n\to \infty $ to a compact portion of a Riemann minimal example
$M_{\infty }$; note that the limit tangent plane of $M_{\infty }$
is represented as horizontal in the figure. Bottom: Two topological
configurations for the subdomain with finite topology $\Delta _n\subset E$,
depending on whether or not the curves $\g _i(n)$
wind around the limit end of $E$, $i=1,2$.
}
 \label{figure6}
\end{center}
\end{figure}

Since $\Delta_n$ has finite topology and compact boundary, then it is
properly embedded in $\rth$ (note that $\Delta _n$ is not compact
by the convex hull property). Without loss of generality, we may assume
that for $i= 1, 2$, each of the interiors of $D_i(n), D'_i(n)$ intersects
$\Delta_n$
transversely in a finite (possibly empty) collection of
simple closed curves (recall that
$M_{\infty }$ has been obtained as a limit after an intrinsic
blow-up procedure, rather than an extrinsic one).
Let $\Delta_1(n)$ denote the closure of the component of $\Delta_n \cap
\left[ \rth - (B_1(n) \cup B_2(n))\right] $ that contains $\partial
\Delta_n$. Since $X_n=\rth - (B_1(n) \cup B_2(n))$ is simply connected
and $\Delta_1(n)-\partial \Delta_1(n)$
is properly embedded in $X_n$, then
$\Delta_1(n)-\partial \Delta_1(n)$  separates $X_n$ into
two subdomains. Let
$F_i(n)$ denote the planar domain in $D_i(n)$ with boundary $\Delta_1(n)
\cap D_i(n)$ and let $F'_i(n)$ denote the planar domain in $D'_i(n)$ with
boundary $\Delta_1(n) \cap D'_i(n)$, for $i = 1, 2$. Hence,
\[
\Delta_2(n) =\Delta_1(n) \cup F_1(n) \cup F'_1(n)
\cup F_2(n) \cup F'_2(n)
\]
 is a properly embedded, piecewise smooth surface that
bounds an open region $R_n$ of $\rth$ such that the boundary of $R_n$ is a good
barrier for solving least-area problems in it (the smooth part of
$\partial R_n$ is minimal and the interior angles are convex); see Figure~\ref{figure6}.

Now choose a simple closed curve $\alpha \subset \Omega _{1,2}\subset M_{\infty }$
which bounds one of the
annular ends of $\Omega _{1,2}$. For $n$ sufficiently large, let $\alpha_n$ denote a related
simple closed curve on
$\Delta_n$ such that the $\l _n(\a _n-p_n)$
converge smoothly to $\a$ as $n\to \infty$ and
\[
\alpha_n\subset \mbox{Int}[\Delta _1(n)]\subset \Delta_2(n) =\partial R_n.
\]
We can also assume that the curves $\a$ and $\a_n$ are chosen so that
$\alpha_n \cup D_1'(n)\cup D'_2(n)$ lies on the boundary of its
convex hull.

\begin{assertion}
\label{asser3.8}
Consider a (possibly empty) collection ${\cal T}$ of closed curves
in $$\cup _{i=1}^2(D_i(n)\cup D'_i(n))\cap \Delta _1(n).$$ Then, for $n$ sufficiently large,
$\a _n\cup {\cal T}$ does not bound a
 compact minimal surface in
 the closure $\overline{R_n}$ of $R_n$.
\end{assertion}
\begin{proof}
Assume by contradiction that such a surface $S$ exists.
Since $S$ is compact and minimal, the convex hull property implies that $S$
is contained in the convex hull of its boundary $\partial S=\a _n\cup {\cal T}$;
thus $S$ lies in the closed slab containing the disks $D_1'(n)\cup D'_2(n)$.
Note that for $n$ large, there exists a path
$\be _n\subset \Delta_n - \alpha_n$ joining
$\g_1(n) $ to $\g_2(n)$. After adding two arcs $c_{n,1},c_{n,2}$ to $\be_n$ such that
$c_{n,i}\subset A_i(n)$, we obtain an embedded arc $\wh{\be}_n$ that joins
$\g_1'(n) $ to $\g_2'(n)$, see Figure~\ref{figure6} top.
Let $\wh{\be}'_n$ be a path parallel and
close to $\wh{\be}_n$, lying outside $\overline{R_n}$ in $\R^3$, and with the same
end points as $\wh{\be}_n$. Observe that $\wh{\be}'_n$ lies
the closed slab containing $D_1'(n)\cup D'_2(n)$.
Since $\wh{\be '}_n$ does not intersect $\overline{R_n}$, then $\wh{\be }'_n$
has zero intersection number with $S$. On the other hand, this
intersection number can be computed (mod~2) as the sum of the linking numbers (mod~2)
of $\wh{\be}'_n$ with the boundary components of $S$.
Since $\partial S$ can be assumed to lie  on the boundary of a convex body disjoint from the
end points of $\wh{\be}'_n$, $S$ is contained
in this convex body, and $\wh{\be }'_n$ does not link any of the
curves in ${\cal T}$ but it has linking number one with
$\a _n$, then $\wh{\be}'_n$ must have odd intersection number with $S$,
which is a contradiction. This proves Assertion~\ref{asser3.8}.
\end{proof}

Note that $\a _n$ separates the planar domain $\Delta _n$ into two closed
components, where one of these components $S_n'$ is a planar domain with $\partial S_n'=\a_n$,
see Figure~\ref{figure6}.
Also $\a _n$  separates the planar domain $\Delta _1(n)$ into two closed
components, where  one of these components $S_n$ satisfies $S_n\subset S_n'$. The boundary
of $S_n$ consists of $\a _n$ together with a collection ${\cal T}_n$  of closed planar curves
in $\cup _{i=1}^2(D_i(n)\cup D'_i(n))\cap \Delta _1(n)$. Let
$S_n(1)\subset S_n(2)\subset \ldots $ be a compact exhaustion of $S_n$ by smooth connected
subdomains with $\partial S_n\subset \partial S_n(1)$ and let $\wh{S_n}(k) $ be an area-minimizing
compact surface in $\ov{R_n}$ with $\partial \wh{S_n}(k) = \partial S_n(k) $
in the relative $\Z_2$-homology class of  $S_n(k) $, for all $k\in \N$; 
$\wh{S_n}(k)$ is orientable since either ${S_n}(k)\cup\wh{S_n}(k)$
is the piecewise-smooth boundary of a connected compact region of $\rth$, or else
${S_n}(k)\cup\wh{S_n}(k)$ is the union of some components of ${S_n}(k)$ and a
piecewise-smooth compact region of $\rth$.
A limit of some subsequence of $\{ \wh{S_n}(k) \} _k$
produces a properly embedded, oriented  stable minimal surface
$\wh{S_n}(\infty) \subset \overline{R_n}$ with boundary $\a _n\cup {\cal T}_n$, see~\cite{msy1} for
these standard arguments.
By Assertion~\ref{asser3.8},
$\a _n\cup {\cal T}_n$ does not bound a compact minimal surface in $\overline{R_n}$.
Therefore, the component $S_n(\infty)$ of $\wh{S_n}(\infty)$ which contains $\a _n$ is noncompact.
Let $G_n\colon S_n(\infty)\to \esf^2(1)$
be the Gauss map of $S_n(\infty)$.

Given $n\in \N$, consider the dilation (i.e., the composition of a translation and a homothety) 
$f_n(x)=\lambda_n (x -p_n)$, $x\in \R^3$.
Let $R_0>0$, $n_0\in \N$ be sufficiently large so that the following properties hold
for all $n\geq n_0$:
\ben[1.]
\item $f_n(\partial \Delta _1(n))\subset \ov{\B}(R_0)$ and, without loss of generality,
we may assume that the closed curves
$f_n(\a_n )\subset f_n(\Delta _1(n))\cap \partial\B(R_0)$
converge to $\a =M_\infty\cap \partial\B(R_0)$ as $n\to \infty$.
\item  There exists an increasing sequence of numbers  $R_n>R_0$ that diverge to infinity and such that
for every $n\in \N$, the component $\Sigma_n$ of $f_n(\Delta _1(n))\cap [\B(R_n)-\B(R_0)]$
that contains $f_n(\a _n)$ is a graph over its projection to the plane $Q$, and the $\Sigma _n$
converge smoothly on compact sets of $\R^3$ as $n\to \infty $ to the  annular end of $M_\infty$ bounded by $\a$.
\een
Note that $f_n(S_n(\infty))\cap [\B (R_n)-\ov{\B}(R_0)]$ is either contained in $\Sigma _n$, or it is
disjoint from $\Sigma _n$. By curvature estimates for stable minimal surfaces and  after choosing a subsequence,
the surfaces
\[
\frac{1}{\sqrt{R_n}}\left( \left[ \Sigma_n \cup \left( f_n(S_n(\infty)\right) \right]\cap \left[
\B (R_n)-\ov{\B}(R_0)\right] \right)
\]
converge to a minimal lamination $\cL$ of $\rth-\{\vec{0}\}$ with quadratic
decay of curvature, which contains the leaf $Q -\{\vec{0}\}$. By the
Local Removable Singularity Theorem (Theorem~1.1 in~\cite{mpr10}),
$\cL$ extends to a minimal lamination $\ov{\cL}$ of $\R^3$ with quadratic decay of curvature. As $\ov{\cL}$
contains $Q$,  Corollary~6.3 in~\cite{mpr10} implies that all leaves
of $\ov{\cL}$ are flat, and hence, they are planes parallel to $Q$.

Back to the scale of $E$, consider the compact subdomain
$S'_n(\infty)=S_n(\infty)\cap f_n^{-1}(\overline{\B}(\sqrt{R_n}))$ of $S_n(\infty)$. Then,
the normal
lines to the boundary of $S_n(\infty)-S'_n(\infty)$ make arbitrarily small angles
with the normal line to the plane $Q$ for $n$ sufficiently large. Pick a component $K_n$ of
$S_n(\infty)-S'_n(\infty)$ that intersects the boundary of $S_n(\infty)-S'_n(\infty)$.
Since $K_n$ is stable with finite total curvature~\cite{fi1}, then, for $n$ sufficiently large,
the Gaussian image of $K_n$ must be  arbitrarily close to
one of the two unit normal vectors $\pm V_Q$ to $Q$,
considered to be points of $\esf^2(1)$. As $K_n$ lies in the closure of a complement of $E$, then
property (B1) implies that the planar and catenoid-type ends of $K_n$
have limiting Gaussian images contained in the set $\{(0,0,\pm 1)\}\subset\esf^2(1)$.
But this last set is a positive
distance from $\{ \pm V_Q\} $, which is a contradiction.
This contradiction completes the proof of Proposition~\ref{2limit}.
\end{proof}

  \begin{lemma}
\label{pg1}
Case~(C4) cannot occur.
\end{lemma}
\begin{proof}
Since Case~(C4) forces the surfaces $\l _n(M_n-p_n)$
to have the appearance for $n$ large of a properly embedded, minimal planar domain with
two limit ends, large curvature and fixed
size ``horizontal'' flux\footnote{By ``horizontal flux'' we mean the nonzero component of the flux vector
of $\l _n(M_n-p_n)$ that is parallel to the planes
of the limit parking garage structure
associated to Case~(C4).} (see Traizet and Weber~\cite{tw1}, or~\cite{mpe2,mpr14}),
the proof of this lemma follows from a straightforward
adaptation of the proof  of Proposition~\ref{2limit}.
\end{proof}

\begin{proposition}
\label{cat}
$M_\infty$ is not a 
catenoid.
\end{proposition}
\begin{proof}
Reasoning by contradiction, assume $M_{\infty }$ is a catenoid.
By Lemma~\ref{flux}, $M_\infty$ has a vertical axis and
  the simple closed curves $\gamma_n \subset E$ defined just before Lemma~\ref{flux}
can be chosen to be
horizontal convex curves with vertical flux.
For $n$ large, we can choose
a compact unstable annulus $C_n\subset E$ with
$\gamma _n\subset \Int(C_n)$ so that $C_n$ is
arbitrarily close to a rescaling of a fixed, large, compact
unstable piece $C$ of a vertical catenoid. We may also assume that $\partial C_n$
consists of two convex curves in horizontal planes.
Let $D_n\subset \R^3$ denote the open convex
horizontal disk with $\partial D_n=\g_n$.

There are three different possible
 topological configurations for $\gamma_n
$ in $E$, after choosing a subsequence (see Figure~\ref{fig8}).
\begin{figure}
\begin{center}
\includegraphics[width=12.5cm]{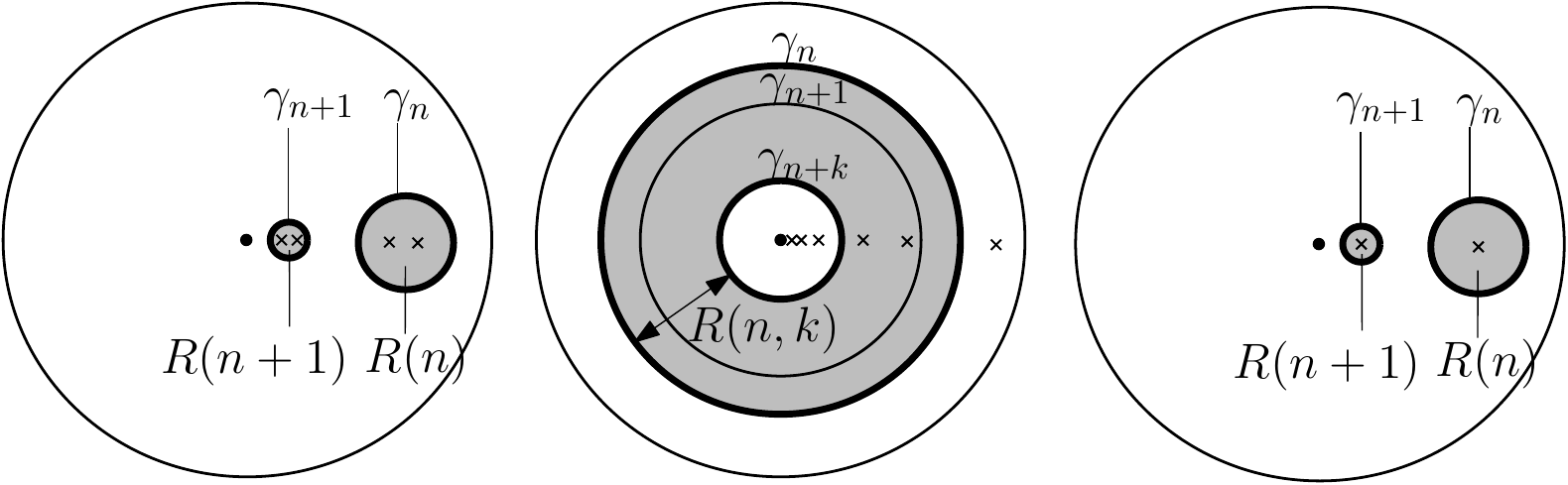}
\caption{Cases (D1) (left), (D2) (center) and (D3) (right) for Proposition~\ref{cat}.}
\label{fig8}
\end{center}
\end{figure}

\begin{enumerate}[(D1)]
\item Each $\gamma_n$ is
the boundary of a proper subdomain $R(n) \subset E$
with a finite number of annular ends greater than $1$.
\item When considered to lie in
$\overline{\D }- \{ 0 \}$, each
$\gamma_n$ is
homologous to $\partial E$. Hence, for $k$
large, the annular domain $R(n, k) \subset E$ bounded by $\gamma_n \cup \gamma_{n+k}$
has a finite positive number of
annular ends.
\item Each $\gamma_n$ bounds a proper annulus $R(n) \subset E$ with
 $\partial R (n) = \gamma_n$.
\end{enumerate}
The proof of Proposition~\ref{cat} will be a case-by-case elimination of each of
these three possibilities (for $n$ sufficiently large).

{\bf We first check that Case~(D1) does not occur.}
 In this case, $\gamma_n$ bounds
a proper, finite topology domain $R(n) \subset E$ with more than one end
and vertical flux.

\begin{assertion}
\label{asser3.11}
The open planar disks $D_1(n),D_2(n)\subset \R^3$
bounded by the curves in $\partial C_n$, are disjoint from $R(n)$.
\end{assertion}
\begin{proof}
If not, the proper surface $R(n)$ intersects the compact region
$W_n\subset \R^3$ bounded by
$C_n\cup D_1(n)\cup D_2(n)$ in a compact component
$\Omega (n)$ with boundary in $D_1(n)\cup D_2(n)$. Observe that
$\Omega (n)$ intersects both $D_{1}(n)$ and $D_{2}(n)$ by
the maximum principle for minimal surfaces.
Let $\widehat{W}_n$ be the closure of the component of $W_n-\Omega (n)$
that contains $C_n$ in its boundary. Since $\partial D_1(n)\cup \partial D_2(n)$
bounds the annulus $C_n$ in $\widehat{W}_n$
and $\partial D_1(n)$ is homotopically nontrivial in $\widehat{W}_n$, then
the Geometric Dehn Lemma
for Planar Domains in Theorem 5 in~\cite{my1} (as adapted in the more general boundary
setting of~\cite{my2}) implies that
$\partial D_1(n)\cup \partial D_2(n)$ is the boundary
of an embedded, least-area minimal annulus in $\widehat{W}_n$. But $\partial
D_1(n)\cup \partial D_2(n)$ also
bounds a stable minimal annulus in the
outer side of $C_n$, since $C_n$ is a good
barrier  that is an unstable minimal annulus. This
contradicts Theorem~1.1 in~\cite{mw1} which states that a pair of
convex curves
in parallel planes can bound at most one compact stable minimal annulus.
This contradiction proves Assertion~\ref{asser3.11}.
\end{proof}

Once we know that $D_i(n)\cap R(n)=\mbox{\O }$
for $i=1,2$, then $\g_n=R(n)\cap \ov{D_n}$,
which implies that
$R(n)\cup D_n$ is a properly embedded surface in $\R^3$.
Hence, $R(n)\cup D_n$ separates
$\R^3$ into two components. In this situation, for $n$ large the standard
L\'opez-Ros argument can be applied to $R(n)$
(since it is a complete embedded minimal surface with finite total
curvature, vertical flux and convex planar boundary
which is the boundary of an open convex planar disk  disjoint from the surface,
see Theorem~2 in~\cite{pro1} for a similar
argument), to conclude that $R(n)$ is an annulus. {\bf Thus,
 Case~(D1) does not occur.}
\vspace{.2cm}

We will use the following property when ruling out Cases (D2) and (D3).
\begin{assertion}
\label{ass3.14a}
Suppose after choosing a subsequence, that $\{ p_n\} _n$
converges to some point $p_{\infty }\in \R ^3$ and Case~(D3) holds for $\g _n$ for all $n\in
\N$. Then, the horizontal plane $L(p_{\infty })
\subset \R^3$ passing through $p_{\infty }$ satisfies that
 $E\cap L(p_{\infty })=\mbox{\rm \O}$, after removing any small compact neighborhood
of $\partial E$.
\end{assertion}
\begin{proof}
Since we are in Case~(D3), then $\g _n$ bounds a proper annulus $R(n)\subset E$.
After replacing
$\g _n$ by one of the boundary curves of the almost perfectly formed catenoid $C_n$, we
have that the new annulus $R(n)\subset E$ with $\partial R(n)$ the replaced boundary curve,
is disjoint from $\Int(C_n)$, and thus, we can assume that the
total absolute curvature of $R(n)$ is arbitrarily small for $n$ sufficiently large. Since the Gauss map
of $R(n)$ is open, almost vertical along $\partial R(n)$ (by Lemma~\ref{flux}),
the image of this Gauss map
has a limiting value $ (0,0,\pm1)$ at the end of $R(n)$, and the spherical image of the Gauss map of $R(n)$
is arbitrarily small, then we deduce that $R(n)$ is the graph of
a function defined on the projection of $R(n)$ to the $(x_1,x_2)$-plane,
and this graph has arbitrarily small gradient.

As we can assume that $\gamma_n \rightarrow p_{\infty }$
as $n \rightarrow \infty$, it follows that
the graphical annuli $R(n)$ converge smoothly away
from $p_\infty$ to the horizontal plane $L(p_{\infty })$
passing through $p_{\infty }$. To finish the proof of the assertion,
it only remains to show that $\Int (E)\cap L(p_{\infty })=\mbox{\rm \O}$.
Arguing by contradiction,
suppose that $L(p_{\infty })$ intersects $E$ at an interior point.
Since $L(p_{\infty })$ is not contained in $E$, then $L(p_{\infty })$ intersects
$E$ transversely at some interior point of $E$. This implies that for $n$
sufficiently large, $R(n)$ intersects $E-R(n)$,
which is impossible since $E$ is embedded. Now the assertion is proved.
\end{proof}

{\bf
We next check that Case~(D2) does not occur for $n$ large.} Arguing by
contradiction, assume that $n$ is large and (D2) holds.
Notice that for $n$ fixed and for $k\geq 1$, the proper subdomains $R(n, k)$  bounded by $\gamma_n \cup
\gamma_{n+k}$ give rise to  an proper exhaustion
of the representative of the limit end of $E$ whose boundary is  $\g_n$.
Rather than choosing $\gamma_n$ near the waist circle of
the forming unstable compact catenoid piece $C_n$, we choose $\g _n$
to be a curve contained in a horizontal plane at a height so
that for each $k$, the (noncompact) subdomain
$R(n, k) \subset E$ contains two unstable, pairwise disjoint, compact almost-catenoidal pieces,
also denoted by $C_n$, $C_{n+k}$, near $\gamma_n$ and
$\g _{n+k}$ respectively, so that $C_n$ is an annular neighborhood of
$\g _n$ (resp. $C_{n+k}$ is a neighborhood of
$\g _{n+k}$) in the new proper domain $R(n,k)$.
We may assume that both boundary curves of $C_n$ and of $C_{n+k}$ are convex horizontal curves
for all $k$. Also, $n$ can be chosen so that for all $k$ sufficiently large, the
almost-catenoid $C_{n+k}$ is much smaller than the scale of the
almost-catenoid $C_n$, see Figure~\ref{figure5}.
\begin{figure}
\begin{center}
\includegraphics[width=14cm]{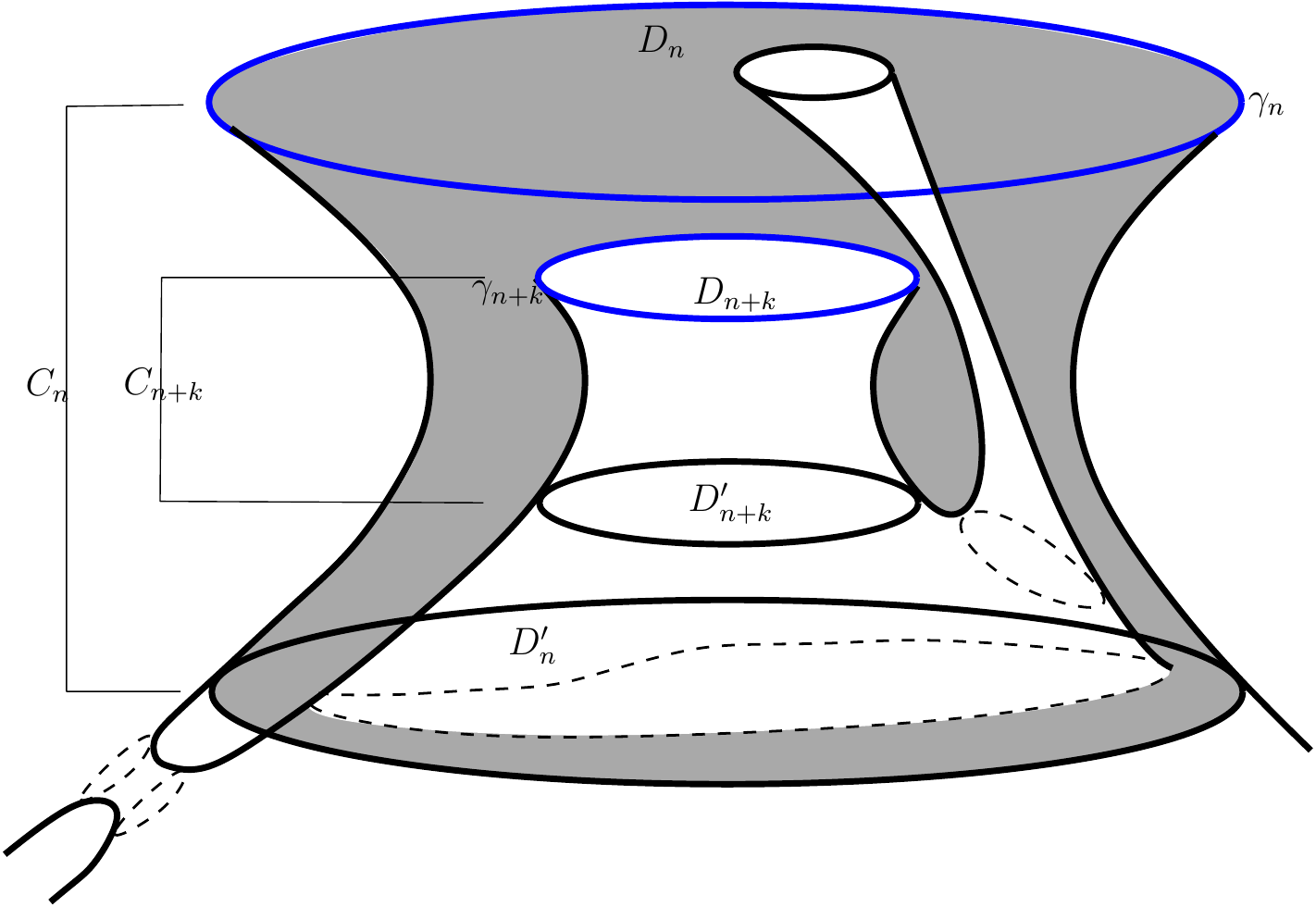}
\caption{
The boundary curve $\g _{n+k}$ of $R(n,k)$
must be contained in the compact region
$W_n\subset  \R^3$ bounded by $C_{n}\cup D_{n}\cup D'_{n}$.
}
\label{figure5}
\end{center}
\end{figure}

Given $n\in \N$, let $D'_n\subset \R ^3$ be the horizontal open disk
bounded by $\partial C_{n}-\g _{n}$ (recall that $D_n$ is the horizontal open disk
bounded by $\g _{n}$). 
We next analyze the intersection of $R(n,k)$ with $D_n,D'_n,D_{n+k},D'_{n+k}$.

\begin{enumerate}
\item[(D2-a)] We may assume that $D_{n+k},D'_{n+k}$
are disjoint from $C_n$ (because the scale of $C_{n+k}$
is much smaller than the scale of $C_n$, and both $C_n,C_{n+k}$ are inside $E$ which
is an embedded surface).

\item[(D2-b)] An analogous reasoning as in the proof of Assertion~\ref{asser3.11}
shows that both $D_{n+k}$, $D'_{n+k}$ are disjoint from $R(n,k)$.
Observe that the boundary curve $\g _{n+k}$
must be contained in the compact region
$W_n\subset  \R^3$ bounded by $C_{n}\cup D_{n}\cup D'_{n}$ as in Figure~5
(otherwise the arguments in the proof of Assertion~\ref{asser3.11} lead to a contradiction).
\end{enumerate}
The maximum principle and the fact that the scale of $C_{n+k}$ is much smaller
than the scale of $C_n$ imply  that $C_{n+k}$ is contained in the interior of $W_n$.
Therefore, the topological balls $W_n$ can be assumed to be concentric, in the following sense:
\begin{enumerate}[$(\star )$]
\item After replacing by a
subsequence and re-indexing, $W_{n+1}\subset \Int (W_n)$.
\end{enumerate}
Since the scales of the catenoids $C_{n}$ are converging to  zero as $n\to \infty $,
Property $(\star )$ implies that the $W_n$ converge to a point
$c_{\infty }\in \R^3$, which satisfies
$\{ c_{\infty }\} =\bigcap _{n\in \N }W_n\subset \mbox{Int}(W_1)$.
Without loss of generality, we may assume that $\partial E\cap W_1=\mbox{\O}$.

We next prove that the surface $E(W_1):=E\cap [\Int(W_1)-\{c_\infty\}]$ has
locally positive injectivity radius in $\Int(W_1)-\{c_\infty\}$.  Otherwise, there is a point
$q\in \Int(W_1)-\{c_\infty\}$ and a sequence of points $q_j\in E(W_1)$, $j\in \N$,
of almost minimal injectivity radius
for $E(W_1)$ in the sense of the Local Picture Theorem on the Scale of Topology,
that diverge in $E(W_1)$ but converge to $q$ as $j\to \infty $. After blowing up
$E(W_1)$ around the points $q_j$ on the scale of the injectivity radius, we find a limit which is
a catenoid (i.e., the other possibilities given by the
Local Picture Theorem on the Scale of Topology are not possible by the
arguments in Proposition~\ref{2limit} and Lemma~\ref{pg1}).
In particular, the catenoid which is forming nearby $q_j$ inside $E(W_1)$ for $j$ large,
is of one of the types (D1),
(D2) or (D3); in this case we will simply say that Case~(D1), (D2) or (D3) holds for $q_j$.
Case~(D1) for $q_j$ is not possible by our previous arguments based on
Assertion~\ref{asser3.11} and the L\'opez-Ros deformation. Also observe that Case~(D2)
cannot occur at $q_j$ for $j$ large, because the $q_j$ are converging to $q
\neq c_{\infty }$, which implies that $q_j$ does not lie in $W_n$ for $n$ large but fixed,
in contradiction with Property~$(\star )$. This implies that for $j$ large, Case~(D3) holds
for $q_j$. Since the $q_j$ converge to $q$ and Case~(D3) holds for $q_j$ for every $j$,
then Assertion~\ref{ass3.14a} insures that the horizontal plane $L(q)$ passing through $q$
in disjoint from $E$ after removing any small compact neighborhood of $\partial E$.
This is impossible, since $L(q)$ intersects $C_1$. This contradiction proves that
$E(W_1)$ has
locally positive injectivity radius in $\Int(W_1)-\{c_\infty\}$.

Since $E(W_1)$ has locally positive injectivity
radius in $\Int(W_1)-\{c_\infty\}$, Remark~2 in~\cite{mr13} ensures that
the closure of $E(W_1)$ in $\Int(W_1)-\{c_\infty\}$ is a
minimal lamination  $\cL$
of $\Int(W_1)-\{c_\infty\}$ that contains $E(W_1)$ as a subcollection of leaves.

We next prove that $\cL$ has no limit leaves in some neighborhood of $c_{\infty }$.
Otherwise, the sublamination $\cL'$ of limit leaves of $\cL$ is not empty,
and $\cL'$ consists of stable leaves by Theorem~1 in~\cite{mpr18}. By
Corollary~7.1 in~\cite{mpr10}, $\cL'$ extends across $c_{\infty }$
to a lamination of $\Int(W_1)$.
Thus, there exists a stable minimal surface $L_1\subset \Int (W_1)$
passing through $c_{\infty }$
such that $L_1-\{ c_{\infty }\}$ is a leaf of $\cL'$. Since $L_1$ is stable
and $C_n$ is unstable, then $L_1$ is disjoint from $C_n$ for all $n\geq 2$.
Therefore, for $\ve >0$ small enough, the
ball $\B (c_{\infty },\ve )$ of center $c_{\infty }$ and radius $\ve $ intersects
$L_1$ in a component $\Omega _1$ which is a disk that separates $\B (c_{\infty },\ve )$.
Take $n\in \N$ large enough so that $W_n\subset \B (c_{\infty },\ve )$, which
exists since $\{ c_{\infty }\} =\bigcap _{n\in \N }W_n$.
As $\Omega _1$ contains $c_{\infty }\in \Int (W_n)$ but $\Omega _1\cap
C_n=\mbox{\O }$ and $W_n\cap \partial \Omega _1=\mbox{\O }$,
then $\Omega _1\cap (D_n\cup D'_n)$ is nonempty. Without loss of generality,
we may assume that  $\Omega _1$ intersects $D_n\cup D'_n$ transversely and so,
there exists a simple closed curve $\be $
in $\Omega _1\cap (D_n\cup D'_n)$. This contradicts
 the maximum principle applied to the subdisk of  $\Omega _1$ bounded by $\be $.
This contradiction proves that $\cL$ has no limit leaves in some neighborhood of $c_{\infty }$.

Since $\cL$ has no limit leaves in some neighborhood of $c_{\infty }$, we may assume
that in some small compact neighborhood $N$ of $c_\infty$ in $\R^3$,
$\cL\cap N=[E-\{c_\infty\}]\cap N$ and $[E-\{c_\infty\}]\cap N$
is a properly embedded minimal surface in $N-\{c_\infty\}$ of genus zero.
But properly embedded minimal surfaces of finite genus in
a punctured Riemannian ball extend smoothly across the puncture
(see for example, Corollary~2.7 in~\cite{mpr11}
for this minimal lamination extension result).
This is clearly not possible because the Gaussian curvature of $E$
is not bounded in any neighborhood of $c_\infty$.
{\bf This contradiction proves that Case~(D2) does not occur for $n$ large.}
\par
\vspace{.2cm}

 {\bf Finally we check that Case~(D3) does not occur}, which will finish the proof of
Proposition~\ref{cat}.  By Lemmas~\ref{flux}, \ref{pg1} and Proposition~\ref{2limit}
and from the previously considered cases, we may assume that
all local pictures $M_n$ of $E$ on the scale
of topology (defined by properties (C1)-\ldots -(C4))
produce, after blowing-up, limiting catenoids
with vertical axes, and the horizontal almost waist circles $\g _n\subset E$ are in Case~(D3)
for all $n\in \N $ (after passing to a subsequence).
Consider  the related sequences $\{ M_n\} _n$, $\{ \g _n\} _n$.
We can assume that for all $n$, $M_n$ contains a compact piece of an almost perfectly
formed unstable catenoid $C_n$ containing $\g_n$, where $C_n$ is a shrunken image of a
large compact portion of an
almost-catenoid whose boundary consists of simple closed convex horizontal planar curves.
Since we are in Case~(D3), then $\g _n$ bounds a proper annulus $R(n)\subset E$.
After replacing
$\g _n$ by one of the boundary curves of the almost perfectly formed catenoid $C_n$, we
have that the new annulus $R(n)\subset E$ bounded by $\g _n$ satisfies the
following properties (see the proof of Assertion~\ref{ass3.14a}):
\begin{enumerate}[(E1)]
\item $R(n)$ is the graph of a function defined on the projection of $R(n)$ to the
$(x_1,x_2)$-plane, and this graph has arbitrarily small gradient.
\item Length$(\g _n)\to 0$ as $n\to \infty $.
\end{enumerate}

We will next show that Assertion~\ref{asser3.11} holds in this new setting.
\begin{assertion}
\label{asser3.11new} After extracting a subsequence and possibly
 replacing $E$ by another end representative,
for every $n\in \N$, the open planar disks $D_1(n),D_2(n)\subset \R^3$
bounded by the curves in $\partial C_n$, are disjoint from $E$.
\end{assertion}
\begin{proof}
Let $W_n\subset \R^3$ be the compact region
bounded by $C_n\cup D_1(n)\cup D_2(n)$.
After choosing a subsequence and removing a small neighborhood of $\partial E$ from $E$,
we may assume that $W_n\cap \partial E =\mbox{\O}$.
Observe that $E\cap \Int (W_n)$ is
locally simply connected: otherwise, there exists
some point $p_{\infty }\in E\cap \Int(W_n)$
where Case~(D3) holds for $\g_m$ for all $m\in \N$ sufficiently large ($m$ larger than $n$);
in this case, Assertion~\ref{ass3.14a} ensures
that the horizontal plane $L(p_{\infty })$ passing through
$p_{\infty}$ is disjoint from $E$ after removing any compact neighborhood of $\partial E$, which is
impossible since $L(p_{\infty})\cap C_n\neq \mbox{\O}$. Thus,
$E\cap \Int (W_n)$ is locally simply connected.

The arguments in the previous paragraph and Assertion~\ref{ass3.14a} ensure
that there exists an open set $U\subset \R^3$ such that $C_n\subset U$ and
the restriction of the injectivity radius  function of $E$ to $E\cap U$ is bounded away from
zero. Therefore, the closure of $E\cap U$ relative to the open set $U$ is a minimal lamination of $U$.
As $E\cap \Int (W_n)$ is locally simply connected, the closure of $E\cap \Int (W_n)$
relative to $\Int (W_n)$ is a minimal lamination of $\Int (W_n)$. Consequently,
the closure of $E\cap [U\cup \Int (W_n)]$ is a minimal lamination of $U\cup \Int (W_n)$.
Since $C_n$ is unstable, then $C_n$ is not contained in a limit leaf of this lamination,
which implies that the distance from $C_n$ to the closure $\ov{E\cap \Int (W_n)}$
of $E\cap \Int (W_n)$ is positive.

As $C_n$ is unstable, we can find a compact unstable
subannulus $C_n'\subset \Int(C_n)$ such that
$\partial C'_n$ consists of two convex horizontal curves
that bound open planar disks $D'_1(n),D'_2(n)\subset \R^3$.
Let $W'_n\subset W_n$ be the compact region
bounded by $C'_n\cup D'_1(n)\cup D'_2(n)$. It follows from the previous
paragraph that the closure of $E\cap \Int (W_n)$ relative to $\Int (W_n)$
is a minimal lamination of $\Int(W_n)$, that is at a positive distance from $C_n$.
In particular, the closure of $E\cap \Int (W_n)$ relative to $\Int (W_n)$
intersected with $W'_n$ is a compact, possibly empty, set $X$ in $W_n'$.

Suppose the assertion fails for some $n$, that is, $E$ intersects $D_1(n)\cup D_2(n)$.
Then, $E\cap \Int(W_n)\neq \mbox{\O}$ and thus, we can assume
 $E\cap \Int(W'_n)\neq \mbox{\O}$ by choosing $C_n'$ sufficiently close to $C_n$.
 In particular, $X\neq \mbox{\O}$. As $X$ is a compact union of minimal surfaces in
 $W_n'$, then the maximum principle applied to $x_3$ gives that each component of
 $X$ intersects both disks $D_1'(n),D_2'(n)$.
 Since $X$ is a good barrier for solving Plateau type problems in $W'_n$, and
 $\partial C'_n$ does not bound minimal disks in $W_n'-X$, then there exists
a least area annulus $A'\subset W_n'$ with boundary $\partial A=\partial C'_n$.
This is impossible, by the same reasoning as  in the proof of Assertion~\ref{asser3.11}.
This completes the proof of Assertion~\ref{asser3.11new}.
\end{proof}

Arguing by contradiction, assume that Case~(D3) occurs for all $n$.
By our earlier considerations, there would exist an infinite collection
of pairwise-disjoint almost-catenoids $C_n$ forming on $E$ of the
type described in Case~(D3) and that satisfy the conclusions of
Assertion~\ref{asser3.11new}. Also, we can assume that
the logarithmic growths of the associated graphs $R(n)$ all have the same sign,
say negative.

Consider the piecewise smooth graphical planes $P_n=D_2(n)\cup R(n)$, where
$D_2(n)$ is the lower open disk given in Assertion~\ref{asser3.11new}.
Note that as $D_2(n)\cap E=\mbox{\O}$, then $E-R(n)$ is contained in
the component of $\R^3-P_n$ above $P_n$. It follows that the connected surface
$E-\cup_n R(n)$ must lie above each of the $P_n$. By elementary separation properties,
this situation is not possible as it would imply that $P_1$ lies above $P_2$ and $P_2$ lies above
$P_1$.  {\bf This contradiction completes the proof that Case~(D3) does not occur.}
So, Proposition~\ref{cat} is proved.
\end{proof}

By  Lemma~\ref{pg1}, Propositions~\ref{2limit}, \ref{cat}
 and the paragraph before Remark~\ref{rem3.6},
we conclude that the injectivity radius function $I_E$
 is bounded away from zero outside of some (and thus, every)
intrinsic $\ve$-neighborhood  of $\partial E$.
Therefore, Theorem~\ref{thmmr} insures that $E$ is properly embedded in $\R^3$,
which completes the proof of Theorem~\ref{propos3.4}.
\end{proof}

\section{The proof of Theorem~\ref{thm1.3}.}
\label{sec4}
Let ${\bf e}$ be a simple limit end of genus zero of a complete, embedded minimal
surface $M\subset \R^3$ with compact
boundary (possibly $\partial M=\mbox{\O}$).
By Theorem~\ref{propos3.4}, we can choose a representative $E$ of
${\bf e}$  such that $E$ is properly embedded in $\R^3$. The arguments
at the end of Section~\ref{secprelim} show
that after relabeling, properties (A1), (A2) hold for
$E$. As explained in the second paragraph of the proof of Theorem~\ref{propos3.4},
each  simple end of $E$ has and an annular end representative
with finite total curvature and is asymptotic to an
end of a plane or catenoid, which after a
fixed rotation of $M$ in $\R^3$, is a graph over
its projection to  the $(x_1,x_2)$-plane.
Since $E$ is properly embedded in $\R^3$, it
follows from the Ordering Theorem~\cite{fme2} and Theorem~1.1
in~\cite{ckmr1} that the limit end of $E$, after a possible
rotation by $\pi$ around the $x_1$-axis,
is the top end of $E$.


Lemma 3.6 in~\cite{ckmr1} implies that a limit end of a properly
embedded minimal surface with compact boundary in $\rth$ cannot
have a representative that lies
above the end of a catenoid with positive logarithmic growth. Therefore, since
the limit end of $E$ is its top end and the middle ends of $E$ are
asymptotic to planes and catenoidal ends, none of the catenoidal
ends in $E$ have positive logarithmic growth. This proves items~1 
 and~2 
 of Theorem~\ref{thm1.3}.

\begin{lemma}
\label{lemma3.15}
There exists a divergent sequence of points $q_{n} \in
E$ such that $\frac{I_{E}(q_{n})}{|q_{n}|} \rightarrow
0$ as $n \rightarrow \infty$, where $I_{E}$ is the injectivity radius
function of $E$.
\end{lemma}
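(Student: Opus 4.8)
The goal of Lemma~\ref{lemma3.15} is to produce a divergent sequence $q_n\in E$ with $I_E(q_n)/|q_n|\to 0$. This is a scale-invariant (rather than absolute) blow-up statement, and the natural strategy is to argue by contradiction: \textbf{assume instead that there is a constant $c>0$ such that $I_E(q)\geq c\,|q|$ for all $q\in E$ outside some compact set}. I want to derive from this a strong structural conclusion about $E$ — essentially that $E$ looks like a minimal surface with quadratic decay of curvature — which will contradict the fact that $E$ is a simple limit end of genus zero (hence has unbounded topology diverging to infinity, or at any rate cannot be as controlled as the hypothesis would force). The key point is that a lower bound $I_E(q)\geq c|q|$ rules out the topology of $E$ (the annular ends plus the limit end) concentrating at any finite scale, so everything must happen on the scale of $|q|\to\infty$.

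The plan is as follows. First I would recall that by Theorem~\ref{propos3.4} and the discussion opening Section~\ref{sec4}, $E$ is properly embedded, has genus zero, finitely many annular ends asymptotic to planes or non-positively-growing catenoid ends, and one limit end which is its top end. Now suppose the Lemma fails, so $I_E(q)\geq c|q|$ for $|q|$ large. Consider the sequence of rescaled surfaces $E_t = \frac{1}{t}\,E$ as $t\to\infty$ (intersected with fixed balls away from the origin). The hypothesis $I_E(q)\geq c|q|$ says precisely that the $E_t$ have injectivity radius bounded below away from $\vec 0$ on compact subsets of $\R^3-\{\vec 0\}$, uniformly in $t$; equivalently, $\{E_t\}$ is locally simply connected in $\R^3-\{\vec 0\}$ there. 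By the Limit Lamination Closure Theorem (Theorem~1 in~\cite{mr13}), after passing to a subsequence the $E_t$ converge to a minimal lamination $\cL$ of $\R^3-\{\vec 0\}$. Because the $E_t$ have genus zero and uniformly bounded-below injectivity radius, one gets quadratic curvature decay for $\cL$, and then by the Local Removable Singularity Theorem (Theorem~1.1 in~\cite{mpr10}) $\cL$ extends across $\vec 0$ to a minimal lamination $\overline{\cL}$ of $\R^3$ with quadratic decay of curvature; by Corollary~6.3 in~\cite{mpr10} all its leaves are flat, hence parallel planes.

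Next I would extract the contradiction from the limit being a foliation by parallel planes. The limit end of $E$ is its top end and has nonzero (indeed, by the later item~3, vertical-type) flux along separating curves, and the annular ends of $E$ carry definite flux as well; under the rescaling $E_t=\frac1t E$ these fluxes do not scale away on the homology classes that survive in $\R^3-\{\vec 0\}$, since the limit end representative persists. A lamination by flat parallel planes has zero flux along every cycle, so the convergence $E_t\to\overline{\cL}$ would force the flux of $E$ along a fixed separating curve $\gamma$ near the limit end to vanish in the limit, while it is in fact bounded away from zero (the curve $\gamma$ can be taken at unit distance from $\vec 0$ in the rescaled picture and its flux is scale-invariant). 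Alternatively — and this is probably the cleanest route — the multiplicity and sheeting: if $\{E_t\}$ converges with locally bounded curvature to a plane, then for large $t$ the surface $E_t$ near a regular value is a finite union of graphs, so $E$ would have finite topology outside a compact set, contradicting that $E$ has infinitely many ends. Either way one reaches a contradiction, proving the Lemma.

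The main obstacle, and the step requiring real care, is verifying that the rescaled family $\{E_t\}$ genuinely has \emph{locally positive injectivity radius} (equivalently is locally simply connected) in $\R^3-\{\vec 0\}$ from the pointwise hypothesis $I_E(q)\geq c|q|$ — one must be careful that the boundary $\partial E$ and a neighborhood of it get pushed into $\vec 0$ under rescaling (which is fine, since $\partial E$ is compact), and that the injectivity radius comparison is the intrinsic one restricted to components disjoint from $\partial E$, matching the hypotheses of Theorem~1 in~\cite{mr13} and the removable singularity results. Once the family is set up correctly in $\R^3-\{\vec 0\}$, the cited machinery (Colding--Minicozzi theory packaged in~\cite{mr13} and the removable singularity/flatness results in~\cite{mpr10}) does the heavy lifting, exactly as in the analogous blow-up arguments already carried out in the proof of Proposition~\ref{2limit}.
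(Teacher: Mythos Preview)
Your overall strategy matches the paper's: argue by contradiction, assume $I_E(q)\geq c|q|$ outside a compact set, and study the rescaled surfaces $\frac{1}{t}E$ in $\R^3-\{\vec 0\}$, which are then locally simply connected there. The setup is correct, including the observation that $\partial E$ shrinks to the origin.

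The gap is in how you extract the contradiction. Both of your proposed endgames fail:
\begin{itemize}
\item Flux is \emph{not} scale-invariant; under a homothety by $\frac{1}{t}$ it scales by $\frac{1}{t}$. So the flux of $E_t$ along a curve at unit distance from $\vec 0$ is $\frac{1}{t}$ times the flux of $E$ along the corresponding curve at distance $\sim t$, and there is no reason for this to stay bounded away from zero.
\item Smooth convergence of $E_t$ to a lamination by planes on compact subsets of $\R^3-\{\vec 0\}$ only controls $E$ on annular shells $\B(Ct)-\B(ct)$; it does not force $E$ to have finite topology outside a compact set. The infinitely many annular ends could (and do) escape the shell as $t\to\infty$.
\end{itemize}
More fundamentally, knowing only that the limit is a \emph{lamination} by parallel planes is too weak; you need it to be a \emph{foliation of all of $\R^3$}.

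The paper closes this gap with two extra ingredients. First, since $E$ has compact boundary, is proper, and does not have finite total curvature, Theorem~1.2 in~\cite{mpr10} produces a divergent sequence $y_n\in E$ with $|K_E(y_n)|\,|y_n|^2\to\infty$. Rescaling by $\sigma_n=1/|y_n|$, the curvature of $\sigma_nE$ at the point $\sigma_ny_n\in\partial\B(1)$ blows up. Second, item~2 of Theorem~2.2 in~\cite{mpr8} (applied to this locally simply connected sequence of genus-zero surfaces with a curvature blow-up point on $\partial\B(1)$) forces the limit $\overline{\cL}$ to be a foliation of \emph{all} of $\R^3$ by parallel planes, with nonempty singular set of convergence consisting of one or two lines orthogonal to the planes. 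Now the contradiction is immediate: since the limit end of $E$ is its top end and the annular ends have nonpositive logarithmic growth, each $\sigma_nE$ lies essentially in $\{x_3\geq 0\}$, so $\cL\subset\{x_3\geq 0\}-\{\vec 0\}$, which cannot close up to a foliation of all of $\R^3$.
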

\begin{proof}
Otherwise, there exists $c>0$ such that $I_{E}(\cdot )\geq c\, |\cdot |$ in $E$,
away from a compact neighborhood of $\partial E$.
Since $\partial E$ is compact, $E$ is properly
embedded and $E$ does not have finite total curvature, then Theorem~1.2
in~\cite{mpr10} implies that there exists a divergent sequence of
points $y_n \in E$ such that $K_{E} (y_n)  |y_n|^2 \rightarrow
-\infty$ as $n \rightarrow \infty$. Consider the sequence of positive numbers
$\sigma_n =\frac{1}{|y_n|}\to 0$. Since
\[
\frac{I_{\sigma _nE}(\sigma_nx)}{|\sigma _nx|}=\frac{I_{E}(x)}{|x|},
\]
we conclude that the sequence of surfaces $\{ \sigma _nE\} _n$ has locally positive
injectivity radius in the open set $\R^3-\{ \vec{0}\} $
in the sense of Definition~\ref{defLSC}, or
equivalently, the sequence of compact genus-zero minimal surfaces
$\{ (\sigma _nE)\cap \ov{\B}(n)\} _n $ is locally simply connected in $\R^3-\{ 0\} $,
see the first paragraph after Remark~\ref{remark3.2}. Since
the surfaces $(\sigma _nE)\cap \ov{\B}(n)$ have genus zero with compact
boundary and the
Gaussian curvature of $(\sigma _nE)\cap \ov{\B}(n)$ at the point $\sigma _ny_n\in
\partial \B (1)$ diverges as $n\to \infty $, then item~2 of Theorem~2.2
in~\cite{mpr8} implies  that
after passing to a subsequence, $\{ (\sigma _nE)\cap \ov{\B}(n)\} _n$ converges to a
minimal lamination ${\cal L}$  of $\R^3-\{ \vec{0}\} $, outside of a
nonempty singular set of convergence $S({\cal L})\subset {\cal L}$
(this is the closed subset of points
$x\in {\cal L}$ such that the supremum of the absolute Gaussian curvature of
$(\sigma _nE)\cap \B (x,\ve )$ is not bounded in $n$, for any $\ve >0$), and
the following property holds:
\begin{enumerate}[(F)]
\item The closure $\ov{\cL}$ of $\cL$ in $\R^3$ is a foliation of $\R^3$ by planes,
and the closure $\ov{S({\cL})}$ of $S({\cL})$ consists of one or two complete lines
orthogonal to the planes in $\ov{\cL}$.
\end{enumerate}

Since the limit end of $E$ is its top end and its annular ends are
catenoidal with nonpositive logarithmic growth, it follows that ${\cal L}$
is contained in the closed upper halfspace
$\{ x_3\geq 0\} $ minus the origin. This contradicts property (F) above,
which completes the proof of Lemma~\ref{lemma3.15}.
\end{proof}

Consider the divergent sequence $\{ q_n\}_n \subset E$ given by Lemma~\ref{lemma3.15}.
We next apply a similar rescale-by-topology argument as
as we did in the proof of Theorem~\ref{propos3.4}
just after property (B1), but instead of using the Local Picture Theorem on the
Scale of Topology as we did there,  we will use the following extrinsic argument.
Given $n\in \N$ large
so that the boundary of $E$ lies in $\B (|q_n|/2)$, consider
the continuous, nonnegative function $h_n\colon \ov{\B }(q_n,|q_n|/2)\cap E\to \R $ given by
\[
h_n(x)=\frac{\mbox{dist}_{\R^3}(x,\partial \B (q_n,|q_n|/2))}{I_{E}(x)}.
\]
$h_n$ vanishes at $\partial \ov{\B }(q_n,|q_n|/2)$. Let $p_n$ be a maximum of $h_n$.
Observe that
\[
h_n(p_n)\geq h_n(q_n)=\frac{|q_n|}{2I_E(q_n)}\to \infty ,
\]
and define
\[
r_n=\frac{1}{2}\mbox{dist}_{\R^3}\left( p_n,
\partial \B (q_n,|q_n|/2)\right)
=\frac{1}{2}h_n(p_n)I_E(p_n).
\]
Then, the sequence of embedded minimal surfaces
of genus zero and compact boundary
\begin{equation}
\label{eq:tildeEn}
\widetilde{E}_n = \lambda _n \left[E\cap \ov{\B}(p_n,r_n) -p_n\right]
\end{equation}
is uniformly locally simply connected in $\R^3$, where
$\l _n=1/I_{E}(p_n)$ (in fact, $\wt{E}_n$ has boundary in the sphere
centered at the origin with
radius $\frac{1}{2}h_n(p_n)\to \infty $ and the injectivity radius function of $\wt{E}_n$ is
at least $1/2$ at points at least at distance $1/2$ from its boundary). By Theorem~2.2 in~\cite{mpr8}
applied to this sequence of surfaces, we deduce that
there exists a minimal lamination $\cL$ of $\R^3$ and a closed subset $S({\cL})
\subset \cL $ such that $\{ \wt{E}_n\} _n$ converges $C^{\be}$, for all $\be
\in (0, 1)$, on compact subsets of $\R^3-S({\cL})$ to $\cL $; here $S({\cL})$
is the singular set of convergence of the $\wt{E}_n$ to $\cL$. Furthermore, exactly one
of the two following cases holds:
%
%

\begin{enumerate}[(G1)]
\item The surfaces $\widetilde{E}_n$ have uniformly bounded Gaussian curvature on compact
subsets of $\R^3$. In this case, $S({\cal L})=\mbox{\O }$ and either $\cL$ is a collection of
planes (this case cannot occur since the injectivity radius function of $\wt{E}_n$
at the origin is 1 for each $n\in \N$),
 or $\cL$ consists of a single leaf $M_{\infty }$, which is properly embedded in $\R^3$
with genus zero. Furthermore, in this last case $\wt{E}_n$ converges
smoothly on compact sets in $\R^3$ to $M_{\infty }$ with multiplicity one and exactly
one of the following three cases holds for $M_{\infty }$:
\begin{enumerate}[(a)]
\item $M_{\infty }$ has one end and it is asymptotic to a helicoid (in this case,
Theorem~0.1 in \cite{mr8} insures that $M_{\infty }$ is a helicoid). Again,
this case cannot occur
as the injectivity radius function of $\wt{E}_n$ at the origin is 1 for each $n\in \N$.
\item $M_{\infty }$ has nonzero finite total curvature. In this case, $M_{\infty }$ is a catenoid
by the main result in~\cite{lor1}.
\item $M_{\infty }$ has two limit ends. In this case, $M_{\infty }$ is a Riemann minimal example
by~\cite{mpr6}.
\end{enumerate}

 \item $\cL$ has the structure of a limiting parking garage in the following sense:
$\cL$ is a foliation of $\R^3$ by parallel planes and $S({\cL})$ consists of one or two
lines orthogonal to the planes in $\cL$ (called columns of the limiting parking garage
structure), and as $n\to \infty $, a pair of highly sheeted multivalued graphs
forms inside $\wt{E}_n$ around each of the lines in $S({\cL})$. Furthermore,
if $S({\cL})$ consists of two lines $l,l'$, then $l$ intersects $\overline{\B}(1)$,
$l'$ is at distance~1 from $l$ and the pairs of multivalued graphs inside the
$\wt{E}_n$ around different lines are oppositely handed. In fact, 
$S({\cL})$ cannot consist of a single line; a proof of
this property can be found by a direct adaptation of the second paragraph of the proof
of Lemma~3.4 in~\cite{mpr8}.
\end{enumerate}

\subsection{Finding horizontal planes $P_n$ and ``concentric'' curves $\wh{\G}(n)\subset E\cap P_n$.}
\begin{lemma}
\label{lemma4.2}
After possibly replacing $E$ by another end representative,
there exists a sequence $\{ P_n\} _{n\in \N\cup \{0\}}$ of
horizontal planes with $x_3(P_n)<x_3(P_{n+1})$ and $x_3(P_n)\to \infty $, such that
each $P_n$ intersects $E$ transversely and $P_n\cap E$ contains
a simple closed curve $\wh{\G}(n)$ with the following properties:
\begin{enumerate}
\item $\partial E=\wh{\G}(0)\subset P_0$.
\item When viewed in $\ov{\D}-\{ 0\} $, each $\wh{\G}(n)$ with $n\in \N$
is topologically parallel to $\partial E$.
\item Given $n\in \N$, let $\Omega _n\subset \ov{\D}(*)$ be the finite
topology subdomain whose boundary is $\wh{\G}(n)\cup \partial E$.
Then, $\Omega _n\subset \Omega _{n+1}$ for all $n$.
\item When viewed in $\R^3$, $\Omega _n$ lies below the plane $P_n$.
\item $E$ lies locally above $P_0$ along $\partial E$.
\item  If Case~(G1) occurs then:
\ben
\item For each $n\in \N\cup \{ 0\} $, $\wh{\G}(n)$ bounds a compact convex disk $D_n\subset P_n$ 
whose interior is disjoint from $E$. Furthermore, the
$D_n$ all lie in the same side of $E$.
\item The limit tangent plane at infinity of $M_{\infty }$ is horizontal.
\een
\item If Case~(G2) occurs, then the planes in the limit parking garage structure are horizontal.
\end{enumerate}
\end{lemma}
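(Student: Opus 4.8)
The plan is to replace $E$ by a representative whose boundary lies in a horizontal plane, to establish the horizontality assertions (items~6(b) and~7) by a flux argument, and then to extract the planes $P_n$ and the curves $\wh{\G}(n)$ from the resulting horizontal limit geometry. For the first point: since $M$ is proper (Theorem~\ref{propos3.4}) and ${\bf e}$ is its top limit end, for $t_0$ large and regular the plane $P_0=\{x_3=t_0\}$ misses $\partial M$ and meets $M$ transversally; using the Ordering Theorem and the fact that ${\bf e}$ has genus zero and is isolated among the simple ends, there is a single curve $\wh{\G}(0)\subset P_0\cap M$ --- the slice of the ``central channel'' towards ${\bf e}$ --- which separates ${\bf e}$ from $\partial M$ and encloses only the finitely many annular ends branching below $t_0$, and letting $E$ be the subdomain of $M$ on the ${\bf e}$-side of $\wh{\G}(0)$ we get $E\cong\ov{\D}(\ast)$ with $\partial E=\wh{\G}(0)\subset P_0$, so items~1 and~5 hold and properties~(A1), (A2), (B1) persist. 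Finally, $h_n(p_n)\to\infty$ together with $\mbox{dist}_{\rth}(p_n,\partial\B(q_n,|q_n|/2))\le|q_n|/2$ forces $I_E(p_n)=o(|q_n|)$, so the $p_n$ eventually avoid the (graphical) catenoidal and planar ends of $E$, along which the injectivity radius grows linearly; hence $x_3(p_n)\to\infty$ by properness.

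Next, horizontality. In Case~(G1) let $\g\subset M_\infty$ be the waist circle of the catenoid (case~(b)) or a separating planar circle of the Riemann minimal example (case~(c)): in either case $\g$ is a convex planar curve with nonzero flux whose plane is perpendicular to the axis of $M_\infty$, resp.\ parallel to its limit tangent plane $Q$. For $n$ large, $\g$ is approximated by $\l_n(\g_n-p_n)$ for a simple closed curve $\g_n\subset E$, so $\l_nF_{\g_n}\to F_\g\neq\vec{0}$ and the direction of $F_{\g_n}$ converges to that of $F_\g$. By an argument as in the proof of Lemma~\ref{flux}, viewing $\g_n$ inside $\ov{\D}(\ast)$ it is homologous to a finite union of loops around annular ends of $E$ --- whence $F_{\g_n}$ is vertical by~(B1) --- unless it ``encloses'' the limit end; the latter is excluded for $n$ large by the Gauss-image comparison with $\{(0,0,\pm1)\}$ from the end of the proof of Proposition~\ref{2limit} (the region such a $\g_n$ would cut off towards ${\bf e}$ has infinite total curvature, whereas $\g_n$, being $C^1$-close to the small circle $\g$, has arbitrarily small total rotation and its end has limiting Gauss image in $\{(0,0,\pm1)\}$). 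Therefore $F_{\g_n}$, hence $F_\g$, is vertical, which gives item~6(b), and the limit of the $\wt{E}_n$ near $\vec{0}$ has horizontal limit tangent plane. Case~(G2) is entirely analogous, taking $\g_n\subset E$ a curve linking one of the two forming columns: the same dichotomy makes its flux vertical, so the column, and with it the planes of the limiting parking garage, are vertical, resp.\ horizontal, which is item~7.

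Granting items~6(b)/7, in Case~(G1) the curves $\g_n$ lie, after rescaling, in nearly horizontal planes at heights tending to $+\infty$ (as $x_3(p_n)\to\infty$), and in Case~(G2) the forming double multigraphs contain nearly horizontal sheets at heights tending to $+\infty$; so we may choose regular heights $t_1<t_2<\cdots\to\infty$ with $P_n=\{x_3=t_n\}$ meeting $E$ transversally and a component $\wh{\G}(n)$ of $P_n\cap E$ that is a convex simple closed curve close (after rescaling) to $\g$, resp.\ to a sheet boundary. Since the annular ends accumulating at ${\bf e}$ branch off at heights tending to $+\infty$, only finitely many of them lie below $P_n$, so $\wh{\G}(n)$, viewed in $\ov{\D}-\{0\}$, is topologically parallel to $\partial E$ and the finite-topology subdomain $\Omega_n$ it cobounds with $\partial E$ can be arranged to lie below $P_n$; choosing the $\wh{\G}(n)$ coherently gives $\Omega_n\subset\Omega_{n+1}$, hence items~2, 3 and~4. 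Finally $\wh{\G}(n)$, being planar and convex, bounds a compact convex disk $D_n\subset P_n$; its interior is disjoint from $E$ because, by the convex hull property, the only parts of $E$ below $P_n$ are the catenoidal ends, which diverge away from the bounded disk $D_n$; and, $E$ being proper and two-sided (it is a planar domain) with $D_0$ lying on the lower side of $E$ along $\partial E$ by item~5, all the $D_n$ lie on that same side. This proves item~6(a) and, after shifting indices so that $\wh{\G}(0)$ corresponds to the lowest plane, completes all seven conclusions.

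The crux is the horizontality in items~6(b) and~7: ruling out that the rescaled cross section $\g_n$ ``encloses'' the limit end, so that its flux is forced to be vertical. This is the one place where the global geometry of $E$ --- properness, its limit end being the top end, and its annular ends being horizontal graphs of nonpositive logarithmic growth, so that all Gauss images at infinity cluster at $(0,0,\pm1)$ --- must be combined with the Colding--Minicozzi local picture. The remaining steps --- the choice of the initial representative with planar boundary, the identification of $\wh{\G}(n)$ among the slices $P_n\cap E$, and the nesting --- are topological bookkeeping in $\ov{\D}(\ast)$ together with standard uses of the Ordering Theorem, the convex hull property, and the maximum principle.
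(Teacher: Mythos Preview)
Your horizontality argument for items~6(b) and~7 contains a genuine gap, and in fact the direction of your key dichotomy is reversed. You argue that the curves $\g_n$ (approximating the waist circle or separating circle of $M_\infty$, resp.\ linking a column in Case~(G2)) do \emph{not} enclose the limit end, hence bound finite-topology subdomains with vertical flux, forcing $F_\g$ vertical. But this is precisely the opposite of what happens: the paper shows (see the fourth paragraph of the proof for~(G2), and Claim~\ref{claim4.3} for the catenoid case) that these curves \emph{are} homologically nontrivial in $\ov{\D}-\{0\}$, i.e.\ they do enclose the limit end. Your appeal to ``the Gauss-image comparison from the end of the proof of Proposition~\ref{2limit}'' is spurious: that passage concerns the Gauss map of a \emph{stable} minimal surface trapped in a region of $\R^3-E$, and has no bearing on whether a curve in $E$ winds around~$0$ in $\ov{\D}-\{0\}$. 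Your parenthetical Gauss--Bonnet sketch (``infinite total curvature vs.\ small total rotation'') does not yield a contradiction either, since the region $E(\g_n)$ towards ${\bf e}$ has infinitely many ends.

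More fundamentally, even if you tried to run the argument of Lemma~\ref{flux} correctly (which handles \emph{both} homological cases), that proof requires $\mbox{length}(\g_{n+k})\to 0$, so that the flux along $\g_{n+k}$ vanishes in the limit. Here $\mbox{length}(\g_n)\approx \mbox{length}(\g)/\l_n = \mbox{length}(\g)\cdot I_E(p_n)$, and nothing in the construction of the $p_n$ via the extrinsic function $h_n$ guarantees $I_E(p_n)\to 0$; indeed, in the Riemann case the paper later shows $\l_n\to\l_\infty>0$. So the flux-is-vertical route is simply unavailable in this setting.

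The paper's actual mechanism for horizontality is a \emph{maximum principle} argument, not a flux argument. One first shows the curves (connection loops in~(G2), the $\g_n$ in~(G1)) are homologically nontrivial: if $c_n$ bounded a disk $\Delta_n$ in $\ov{\D}-\{0\}$, its flux would be vertical, forcing the column (resp.\ the axis of $M_\infty$) horizontal, and then interior points of $\Delta_n$ near $c_n$ would lie strictly above $\max x_3|_{c_n}$, contradicting the maximum principle on a domain whose only ends have nonpositive logarithmic growth. Once the curves are known to be nontrivial, one takes two of them bounding an annular domain $A(c_n,c'_n)$ in $\ov{\D}-\{0\}$ and applies the same maximum-principle obstruction: if $M_\infty$ had non-horizontal limit tangent plane, $A(c_n,c'_n)\cap E$ would contain interior points above its boundary, which is impossible. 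Only \emph{after} horizontality is established does one choose horizontal planes $P_n$ and locate the unique component $\wh{\G}(n)$ of $P_n\cap E$ nontrivial in $H_1(\ov{\D}-\{0\})$ (this uniqueness is the claim in the first paragraph of the paper's proof, which you also omit). Your remaining items then follow, though your justification of~6(a) via ``the convex hull property'' is too brief; the disjointness of $\Int(D_n)$ from $E$ ultimately rests on the barrier arguments of Assertion~\ref{asser3.11} type.
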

\begin{proof}
We first claim that if $P$ is a horizontal plane such that $\partial E\subset \{ x_3<x_3(P)\} $, then
$P\cap E$ contains exactly one compact component that is nonzero in $H_1(\ov{\D}-\{ 0\} )$ ($P\cap E$ might
contain infinitely many compact components that bound disks in $\ov{\D}-\{ 0\} $, each one containing finitely
 many annular ends of $E$). To see this, note that $P\cap E$ contains at least one compact component that is
 nonzero in $H_1(\ov{\D}-\{ 0\} )$ since $\partial E$ lies below $P$, the limit end of $E$ is its top end and $E$ is connected.
If $P\cap E$ contains two compact components both nonzero in $H_1(\ov{\D}-\{ 0\} )$, then we can choose
two of such components $\G,\G'$ satisfying that $\G\cup \G'$ is the boundary of
a compact annulus $A(\G,\G')\subset \ov{\D}-\{ 0\} $ such that  $\Int (A(\G,\G'))\cap x_3^{-1}(x_3(P))$ does not 
contain components which are nonzero in $H_1(\ov{\D}-\{ 0\} )$ and when
viewed in $\R^3$, $A(\G,\G')\cap E$ locally lies above $P$ along $\G\cup \G'$. Observe that $A(\G,\G')$ 
contains finitely many (annular)
ends of $E$, each of which has nonpositive logarithmic growth. Therefore,
$A(\G,\G')-x_3^{-1}(-\infty ,x_3(P))$ is a parabolic surface with 
boundary, and $x_3|_{A(\G,\G')-x_3^{-1}(-\infty ,x_3(P))}$ is a bounded nonconstant harmonic 
function with constant boundary values, which is impossible. This proves our claim.

{\bf Assume that Case~(G2) occurs for the limit of the
$\wt{E}_n$.} Recall that a limiting parking garage structure in $\R^3$ with two
oppositely handed vertical columns closely resembles geometrically and
topologically a Riemann minimal example with almost horizontal flux vector
and finite positive injectivity radius; we refer the reader to the
paper~\cite{mpr14} for further explanations.

 Let $l,l'$ be the straight lines which are the columns of
 the limiting parking garage structure, and let
$\widetilde{c}_n= \lambda_n(c_n-p_n)\subset
\widetilde{E}_n $ be a connection loop for
the forming parking garage structure; this means that
$\widetilde{c}_n$ is a closed curve, which
approximates arbitrarily well (for $n$ large enough) a path that
starts at a point in the first column, travels on one level of the
limiting parking garage to the second column, goes ``up'' one level
(remember that we do not know that the columns $l,l'$ are vertical) and then
travels back again on this level ``over'' the previous arc until arriving at the
first forming column, and then goes ``down'' one level until it closes up.

We claim that when viewed in $\ov{\D}-\{ 0\} $, $c_n$ cannot bound a disk; to see this,
note that if $c_n$ bounds a disk in $\ov{\D}-\{ 0\} $, then $c_n$ bounds a
finite topology domain $\Delta_n$ in $E$ with vertical flux. Since for $n$ large
the flux of $\widetilde{E}_n$ along $\wt{c}_n$ is arbitrarily close to a nonzero vector orthogonal to $l$,
we conclude that $l,l'$ are horizontal. This implies that there are points in the interior of $\Delta_n$
whose heights are strictly greater than the maximum height of $c_n$.
Since the ends of $\Delta_n$ are graphical with nonpositive logarithmic growth,
we find a contradiction with the maximum principle for $x_3|_{\Delta_n}$. Therefore, our claim holds.

We next prove that $l,l'$ are vertical lines. Pick a plane $\wt{P}$ in the limiting
parking garage structure, orthogonal to $l,l'$ and for $n$ large, let $P_n$ be a
plane such that $\l _n(P_n-p_n)$ converges to $\wt{P}$ as $n\to \infty$, such
that the height of $P_n$ does not coincide with the height of any planar end of
$E$. Choose two connection loops $c_n,c'_n\subset E$ lying at different sides of $P_n$. Since both
$c_n,c'_n$ are homologically nontrivial in $\ov{\D}-\{ 0\} $
by the last paragraph, then $c_n,c'_n$ are topologically parallel in
$\ov{\D}-\{ 0\} $ and thus, there exists an annular
domain $A(c_n,c'_n)\subset \ov{\D}-\{ 0\} $ bounded by $c_n\cup c'_n$.
Observe that we can choose $c_n,c'_n$ so that $A(c_n,c'_n)$
contains annular ends of $E$ (by the convex hull property).
If $l,l'$ were not vertical, then for $n$ large $A(c_n,c'_n)\cap E$
 would contain interior points
whose heights are strictly greater than the maximum height
of $c_n\cup c'_n$, which is a contradiction as in the previous paragraph.
Therefore, $l,l'$ are vertical lines, which proves item~7 of the lemma.

We continue assuming that Case~(G2) occurs. By Sard's theorem,
 we can assume that $P_n$ intersects transversely $E$.
Identifying $A(c_n,c'_n)\cap E$ with its image minimal surface in $\R^3$, we deduce that the intersection set
$A(c_n,c'_n)\cap x_3^{-1}(x_3(P_n))$
consists of a nonzero finite number of Jordan curves contained in the interior of
$A(c_n,c'_n)$. By elementary separation properties, there exists at least
one component $\wh{\G}(n)$ of
$A(c_n,c'_n)\cap x_3^{-1}(x_3(P_n))$ which is
topologically parallel to $c_n$ in $A(c_n,c'_n)$;
in fact, $\wh{\G}(n)$ is unique by the arguments in the first paragraph of this
proof. Thus, $\wh{\G}(n)\subset E$ satisfies item~2 of the lemma.

Note that the curves $\wh{\G}(n)$ can be chosen (after passing to a subsequence)
so that the finite topology domains $\Omega _n\subset \ov{\D}(*)$ bounded by
$\wh{\G}(n)\cup \partial E$ satisfy $\Omega _n\subset \Omega _{n+1}$
for all $n$, so item~3 of the lemma holds by construction.
Without loss of generality, we may assume that $c_n\subset \Omega _n$.
Given $n\in \N\cup \{ 0\}$ and $k\in \N$, the
annulus $A(\wh{\G}(n),\wh{\G}(n+k))\subset \ov{\D}-\{ 0\} $ bounded by
$\wh{\G}(n)\cup \wh{\G}(n+k)$ satisfies that $A(\wh{\G}(n),\wh{\G}(n+k))\cap E$
is a finitely punctured annulus and
$A(\wh{\G}(n),\wh{\G}(n+k))\cap E$ lies below the horizontal plane
at height $\max \{ x_3(\wh{\G}(n)),x_3(\wh{\G}(n+k))\} $
(by the maximum principle applied to $x_3|_{A(\wh{\G}(n),\wh{\G}(n+k))\cap E}$,
since the annular ends of $E$
have nonpositive logarithmic growth). As $E$ contains points
of arbitrarily large heights because the limit end of $E$
is its top end, we conclude that the heights of the planes $P_n$ are not
bounded from above. After passing to a subsequence,
we can assume that $x_3(P_n)<x_3(P_{n+1})$ and $x_3(P_n)\to \infty $
as $n\to \infty $. This implies that after replacing $E$ by a
representative of the same limit end bounded by the curve $\wh{\G}(0)$, we
can assume that item~1 of the lemma holds provided that Case~(G2) occurs.

Observe that the finite topology domain $\Omega _n$ equals $
A(\wh{\G}(0),\wh{\G}(n))$, hence item~4 holds by the last paragraph.
By transversality, this implies that $E-\Omega _n$ lies locally above
$P_n$ along $\wh{\G}(n)$. In particular, $E$ lies locally above
$P_0=\{ x_3=x_3(\partial E)\} $ along $\partial E$ and item 5 of the
lemma holds provided that Case~(G2) occurs. Thus, the proof
of Lemma~\ref{lemma4.2} is finished if Case~(G2) holds.

{\bf Next assume that Case~(G1) occurs for the limit of the
$\wt{E}_n$ with $M_{\infty}$ being a Riemann minimal example.} The previous arguments can be adapted to prove that:
\begin{itemize}
\item If $\widetilde{c}_n=\l_n(c_n-p_n)\subset \wt{E}_n$ converges to
a circle $C$ in the Riemann minimal example $M_{\infty }$, then $c_n$ winds
once around $0$ in $\overline{\D}-\{ 0\}$ (adapt the arguments in the fourth
paragraph of the present proof and use that if the flux of a Riemann minimal
example is vertical, then its planar ends are not horizontal).
\item The limit tangent plane at infinity for $M_{\infty }$ is vertical
(adapt the arguments in the fifth paragraph of the present proof).
\item There exists a sequence of horizontal planes $P_n$ such that $\{ \l_n(P_n-p_n\} \}_n$
converges to $\{ x_3=x_3(C)\} $, and compact components $\wh{\G}(n)$ of $E\cap P_n$ that
are Jordan curves which, when viewed in $\overline{\D}-\{ 0\}$, wind once around $0$ (adapt the arguments in the sixth paragraph above).
\item The finite topology domain $\Omega _n\subset \overline{\D}(\ast)$ bounded by
$\wh{\G}(n)\cup \partial E$ can be chosen so that $\Omega_n\subset \Omega_{n+1}$ for all $n\in \N$, and all of the remaining properties of
Lemma~\ref{lemma4.2} hold (follow verbatim the arguments in the seventh paragraph of this proof).
\end{itemize}

{\bf Finally suppose that Case~(G1) occurs for the limit of the
$\wt{E}_n$ with $M_{\infty}$ being a catenoid.}
Let $\wt{P},P_n\subset \R^3$ be parallel planes so that $\wt{P}$ intersects $M_{\infty }$ in its waist circle
$\wt{\g}$, and for each $n$ $P_n\cap E$ contains a convex Jordan curve $\g_n$ such that $\{ \l_n(\g_n-p_n)\} _n$
converges to $\wt{\g}$ as $n\to \infty$.
\begin{claim}
\label{claim4.3}
For $n$ sufficiently large, $\g_n$ is nonzero in $H_1(\ov{\D}-\{ 0\} )$.
\end{claim}
\begin{proof}
Assume that $\g_n$ bounds a disk $\Delta $
in $\ov{\D}-\{ 0\} $. By the convex hull property, $\Delta $ contains a finite positive number of ends of $E$, all of which
are annular with finite total curvature and vertical (possibly zero) flux. As $\g_n$ is
convex, a standard application of the L\'opez-Ros deformation argument shows that $\Delta $ contains exactly one end of
$E$. This annular end of $E$ has negative logarithmic growth for $n$ sufficiently large, as the flux of $M_{\infty }$ along
$\wt{\g}$ is nonzero. The same reason gives that $M_{\infty }$ is a vertical catenoid, and thus, $\wt{P},P_n$ are
horizontal planes. For $n$ sufficiently large,
consider a compact annular neighborhood $A(\g_n)$ of $\g_n$ in $E$ with the following properties:
\begin{enumerate}[(H1)]
\item $A(\g_n)$ is bounded by two compact, convex curves in horizontal planes and
the lower boundary curve of $A(\g_n)$ bounds an annular end $R(n)$ of $E$ of catenoidal type (with negative
logarithmic growth).
\item $A(\g_n)$ is unstable and the sequence $\l_n(A(\g_n)-p_n)$ converges smoothly with multiplicity one to a
large compact piece of $M_{\infty }$ containing $\wt{\g}$.
\end{enumerate}
Let $D_n$ (resp. $D_n'$) be the compact horizontal disk in $\R^3$ whose boundary is the lower (resp. upper)
boundary component of $A(\g_n)$. Thus, $\partial D_n=\partial R(n)$.
By the same arguments as in the proof of Assertion~\ref{asser3.11}, the compact region $W_n\subset \R^3$
bounded by $A(\g_n)\cup D_n\cup D_n'$, satisfies that $W_n\cap E=A(\g_n)$
(note that we can assume that $n$ is sufficiently large so that $\partial E$ does not intersect $W_n$).
As $E$ is connected and proper, we deduce that $E- R(n)$ is disjoint from the piecewise smooth,
properly embedded topological plane $ R(n)\cup D_n$. As the limit end of $E$ is its top end,
we deduce that $E- R(n)$ lies entirely above $ R(n)\cup D_n$. In particular, $ R(n)$ is the lowest end of $E$.
As this can only happen once for the $\g_n$, this proves Claim~\ref{claim4.3}.
\end{proof}

We continue assuming that Case~(G1) occurs with $M_{\infty}$ being a catenoid.
By Claim~\ref{claim4.3}, we can
assume that $\g_n$ is nonzero in $H_1(\ov{\D}-\{ 0\} )$ for each $n\in \N$. Let $\Omega _n\subset \ov{\D}(*)$
be the subdomain with finite topology and $\partial \Omega _n=\partial E\cap \g_n$. Adapting the arguments
in the fifth paragraph of this proof (with $\Omega _n$ instead of $A(c_n,c'_n)$) we conclude that the catenoid
$M_{\infty }$ is vertical, and thus, $\wt{P},P_n$ are horizontal planes. As for $n$ large we can assume that
$\partial E$ lies below $P_n$, the claim in the first paragraph of the proof of Lemma~\ref{lemma4.2} shows that $\g_n$ is the
unique compact component of $P_n\cap E$ that is nonzero in $H_1(\ov{\D}-\{ 0\} )$. We now
define $\wh{\G}(n):=\g_n$. Once here, items 1-6 in Lemma~\ref{lemma4.2} are easy to prove by direct adaptation
of the arguments in paragraphs six and seven above. We leave the details to the reader.
\end{proof}

For the remainder of this section, we will assume that $E$ satisfies the properties
stated in Lemma~\ref{lemma4.2}.

\begin{definition} \label{def4.3}
{\rm
Since $E$ is proper,
Theorem~3.1 in~\cite{ckmr1} implies that $(x_3|_E)^{-1}([t,\infty ))$ is a
parabolic manifold with boundary, i.e., it has full harmonic measure
on its boundary. In this situation, the
Algebraic Flux Lemma for parabolic manifolds (Meeks~\cite{me23}) ensures that
if we define
\begin{equation}
\label{VE}
V_{E}:=\int _{\{ x_3=t\} }\frac{\partial x_3}{\partial \eta } \, \in [0,\infty ],
\end{equation}
where $ \eta$ is the inward pointing conormal to $(x_3|_E)^{-1}([t,\infty))$, then $V_E$
is independent of $t\geq \max (x_3|_{\partial E})=x_3(P_0)$, where $P_0$ is the horizontal plane
defined in Lemma~\ref{lemma4.2}. We call $V_{E}$
the {\it vertical flux component} of $E$.
}
\end{definition}

In what follows, we will use the notation
\begin{equation}
\label{notation}
T=T_H+T_V
\end{equation}
for the decomposition of a vector $T\in \R^3$
in its horizontal and vertical components.

\begin{corollary}[Flux Estimates]
\label{fluxestimates}
Let $\Omega_n$ be the subdomains of $E$ defined in Lemma~\ref{lemma4.2},
and let $\be _n\in (-\infty,0]$ be the sum of the (nonpositive) logarithmic growths
of the simple ends of $\Omega_n$.  Let $\eta$ denote the outward
pointing conormal vector to $\Omega_n$ along $\wh{\G}(n)$ and define the associated flux vector
$$\mbox{F}(\wh{\G}(n)):=\int_{\wh{\G}(n)} \eta.$$
 Then, for each $n\in \N$:
\ben
\item \label{eq:Fn}
$F(\wh{\G}(n))=F_{E}-2\pi\beta_n e_3$,
where $F_E$ is the flux of $E$ given in~\eqref{FM} and $e_3=( 0,0,1)$.
\item   $F(\wh{\G}(n))_H=(F_E)_H.$
Furthermore, after a normalization of $E$ by replacing it by its image
under a rotation around the $x_3$-axis,  $F_E=(h,0,\tau)$ for some $h,\tau\in (0,\infty)$,
where $h=|(F_E)_H|$ and $\tau=|(F_E)_V|$.
\item Case~(G2) does not occur.
\item Let $\ds \be_{\infty }=\lim _{n\to \infty }\be _n\in [-\infty ,0]$. If  $\beta_{\infty }$  is finite,
then $V_E\ e_3 =(F_E)_V- 2\pi\beta_{\infty } e_3$, where $V_E$ is defined in (\ref{VE}).
\item Case~(G1-c) (i.e., $M_{\infty}$ is a Riemann minimal example) occurs if and only if
$\be_{\infty }$ is finite.  In this case,
$\ds \l_\infty=\lim_{n\to \infty}\l_n$ exists and is a positive number, and
$M_{\infty}$ is the scaled Riemann minimal example with horizontal limit tangent plane at infinity
that has injectivity radius 1 and  flux vector $\l_{\infty } (h,0,\tau-2\pi \be_{\infty })$.
\item Case~(G1-b) (i.e., $M_{\infty}$ is a catenoid)  occurs if and only
if  $\be_{\infty }=-\infty$ (equivalently, $\ds \lim_{n\to \infty}\l_n=0$).
\een
\end{corollary}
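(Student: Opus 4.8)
The plan is to squeeze everything out of the single homological identity in item~\ref{eq:Fn} and then feed it into the blow‑up dichotomy of Lemma~\ref{lemma4.2}. First I would prove item~\ref{eq:Fn}: since each coordinate function of a minimal surface is harmonic, $\Gamma\mapsto\int_\Gamma\eta$ is an invariant of the homology class of $\Gamma$ in $E$, and in $\Omega_n$ the curve $\wh\G(n)$ is homologous to $\wh\G(0)=\partial E$ together with one small loop around each of the finitely many annular ends of $E$ contained in $\Omega_n$; each such end is asymptotic to a horizontal plane or to a catenoid of logarithmic growth $c_i\le 0$ (items~1--2 of Theorem~\ref{thm1.3}, already established), contributing flux $2\pi c_i e_3$, so $F(\wh\G(n))=F_E-2\pi\beta_n e_3$ with $\beta_n=\sum_i c_i$. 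The correction term being vertical gives $F(\wh\G(n))_H=(F_E)_H$, and a rotation about the $x_3$-axis puts $F_E$ in the form $(h,0,\tau)$ with $h=|(F_E)_H|\ge 0$ and $\tau=F_E\cdot e_3$. For the sign of $\tau$: by item~5 of Lemma~\ref{lemma4.2}, $E$ lies locally above $P_0$ along $\partial E$, so $\eta\cdot e_3\ge 0$ on $\partial E$ and $\tau=\int_{\partial E}\eta\cdot e_3\ge 0$; if $\tau=0$ then $\eta\cdot e_3\equiv 0$, i.e.\ $E$ is tangent to $P_0$ all along $\partial E$ while lying on one side of it, and the boundary maximum principle forces $E$ to coincide with $P_0$ near $\partial E$, which is absurd. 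Hence $\tau>0$, so $|F(\wh\G(n))|\ge|\tau-2\pi\beta_n|=\tau-2\pi\beta_n\ge\tau>0$ for every $n$.

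For items~5 and~6, pass to a subsequence with $\lambda_\infty:=\lim_n\lambda_n\in[0,\infty]$. When Case~(G1) holds, $\lambda_n(\wh\G(n)-p_n)$ converges smoothly to a nontrivial cycle $C$ of $M_\infty$ (a cross-sectional circle if $M_\infty$ is a Riemann minimal example, the waist if $M_\infty$ is a catenoid); since arclength scales by $\lambda_n^{-1}$ while the conormal direction is scale invariant,
\begin{equation}
\label{eq:Cflux}
F_{M_\infty}(C)=\lim_{n\to\infty}\lambda_n F(\wh\G(n))=\lim_{n\to\infty}\bigl(\lambda_n F_E-2\pi\lambda_n\beta_n\,e_3\bigr)\neq 0 .
\end{equation}
Finiteness of the left side together with $|F(\wh\G(n))|\ge\tau>0$ rules out $\lambda_\infty=\infty$. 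If $\lambda_\infty\in(0,\infty)$ then $F(\wh\G(n))$ converges, hence $\beta_n\to\beta_\infty$ is finite and $F_{M_\infty}(C)=\lambda_\infty(h,0,\tau-2\pi\beta_\infty)$; $M_\infty$ then has bounded geometry and genus zero, so it is a catenoid or a Riemann minimal example, and the catenoid is impossible since $\beta_\infty<\infty$ forces the catenoidal necks of $E$ accumulating near $p_n$ to have vanishing size, incompatible with a bounded-scale blow-up to a catenoid. Thus $M_\infty$ is the Riemann minimal example normalized by $I_{M_\infty}\ge 1$, $I_{M_\infty}(\vec{0})=1$ and flux $\lambda_\infty(h,0,\tau-2\pi\beta_\infty)$, giving item~5; moreover in this case $h>0$, because by Lemma~\ref{lemma4.2} the cross-section $C$ lies in a horizontal plane and $M_\infty$ has horizontal limit tangent plane at infinity, so its cross-sections cannot carry purely vertical flux (Theorem~6 in~\cite{mpr3}), and the horizontal part of $F_{M_\infty}(C)$ is $\lambda_\infty(h,0)$. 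If instead $\lambda_\infty=0$, then $\lambda_n F_E\to 0$ and \eqref{eq:Cflux} gives $F_{M_\infty}(C)=-2\pi\lim(\lambda_n\beta_n)e_3$, a nonzero vertical vector, so $\beta_n\to-\infty$; the flux being purely vertical forces $M_\infty$ to be a vertical catenoid (the Riemann option again excluded by Lemma~\ref{lemma4.2} and Theorem~6 in~\cite{mpr3}), which is item~6.

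Case~(G2) (item~3) is excluded along the same lines: there $\lambda_n(\wh\G(n)-p_n)$ approximates a connection loop of the limiting parking garage, whose flux is horizontal because the columns are vertical (Lemma~\ref{lemma4.2}); the vertical component $\lambda_n(\tau-2\pi\beta_n)\ge\lambda_n\tau$ of $\lambda_n F(\wh\G(n))$ must then tend to $0$, forcing $\lambda_n\to 0$, whence its horizontal component $\lambda_n(F_E)_H\to 0$ as well, contradicting convergence to a nonzero horizontal vector. Finally, item~4 follows by letting $n\to\infty$ in item~\ref{eq:Fn} and applying the Algebraic Flux Lemma to the parabolic manifold $(x_3|_E)^{-1}([t,\infty))$ of Definition~\ref{def4.3}: the vertical flux across the level $\{x_3=t\}$ differs from $F(\wh\G(n))_V$ only by the contributions $2\pi|c_i|$ of the annular ends protruding above height $t$, and when $\beta_\infty$ is finite this is the tail of a convergent series and vanishes as $t\to\infty$, leaving $V_E e_3=(F_E)_V-2\pi\beta_\infty e_3$.

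The step I expect to be the real obstacle is the positivity of $h$ in the catenoid case~(G1-b): there \eqref{eq:Cflux} only constrains the vertical direction, since $(F_E)_H=0$ would make $F_{M_\infty}(C)$ purely vertical, which is perfectly consistent with a vertical catenoid. My approach would be to rule out $E$ having purely vertical flux along every cycle: if it did, the harmonic conjugates $x_1^{*},x_2^{*}$ would have no periods, so $dx_1+i\,dx_1^{*}$ and $dx_2+i\,dx_2^{*}$ would be exact, forcing a symmetry (or vertical periodicity) of $E$ incompatible with its having a simple limit end together with infinitely many catenoidal ends. Making this precise, together with the routine but delicate tracking of orientations and of the correction terms throughout item~\ref{eq:Fn} and item~4, is where the main effort of the proof lies.
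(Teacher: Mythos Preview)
You have correctly located the one genuine gap: the positivity of $h=|(F_E)_H|$, and you are right that your harmonic--conjugate sketch will not close it. The paper's device here is not analytic but deformation-theoretic: it applies the L\'opez--Ros argument to the finite-topology domain $\Omega_n$. The key input is item~6(a) of Lemma~\ref{lemma4.2}, which in all of Case~(G1) guarantees that the two boundary curves $\wh\G(0),\wh\G(n)$ of $\Omega_n$ are \emph{convex} horizontal curves bounding horizontal disks $D_0,D_n$ whose interiors are disjoint from $E$ and lie on the \emph{same} side of $E$. If $(F_E)_H=0$, then (by your item~1) every cycle of $\Omega_n$ has vertical flux, and a L\'opez--Ros deformation of $\Omega_n$ preserves embeddedness and the convex planar boundary, forcing $\Omega_n$ to be an annulus --- impossible for $n$ large. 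This argument does not distinguish (G1-b) from (G1-c), so $h>0$ is secured \emph{before} the dichotomy, and items~5--6 then fall out cleanly from the horizontal component alone: $\l_n F(\wh\G(n))_H=\l_n(h,0,0)$ has a nonzero limit (the horizontal flux of a horizontal-end Riemann example) exactly in case~(G1-c), forcing $\l_n\to\l_\infty\in(0,\infty)$, while in case~(G1-b) the catenoid has zero horizontal flux, forcing $\l_n\to 0$ and hence $\be_n\to-\infty$.

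Two smaller corrections. First, your heuristic ``$\be_\infty<\infty$ forces the catenoidal necks accumulating near $p_n$ to have vanishing size, incompatible with a bounded-scale blow-up to a catenoid'' is off: the waist circle of the limit catenoid is the limit of $\l_n(\wh\G(n)-p_n)$, where $\wh\G(n)$ winds around the \emph{limit} end of $E$, not around an annular end; the sizes of the annular ends are irrelevant to that limit. The correct exclusion of the catenoid when $\l_\infty>0$ is simply that its flux is vertical while $\l_\infty(h,0,\tau-2\pi\be_\infty)$ has nonzero horizontal part once $h>0$. Second, in your treatment of (G2) the curves $\l_n(\wh\G(n)-p_n)$ do \emph{not} approximate connection loops --- they are horizontal planar curves and become unbounded in the parking-garage limit. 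The paper works instead with the connection loops $c_n$ themselves (which are homologous to $\wh\G(n)$ modulo vertical-flux annular ends, so carry the same horizontal flux $(F_E)_H$): since the limit loop has purely horizontal flux and $|F(c_n)_V|\ge (F_E)_V>0$, the ratio $|F(c_n)_V|/|F(c_n)_H|$ cannot go to zero. This is essentially your argument, but run on the right curve.
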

\begin{proof}
Item~1 follows from the divergence theorem applied to the
harmonic coordinate functions of $E$, using
the fact that the flux contributions for catenoidal ends of $\Omega_n$
are all vertical with negative logarithmic growth.

The first statement in item~2 follows from taking horizontal components in~item~1;
we next prove the second statement in item~2.
First suppose that Case~(G1) holds. By item (6-a) of Lemma~\ref{lemma4.2},
the boundary curves of $\Omega_n$ are convex planar curves that bound
horizontal disks $D_n$ whose interiors are disjoint from $E$, and
the $D_n$ all lie on the same side of $E$.
Since for $n$ large $\Omega_n$ is not an annulus, then if $(F_E)_H=0$,
then the L\'opez-Ros deformation argument applied to $\Omega_n$
would lead to a contradiction. The fact that $(F_E)_V\neq 0$ follows directly from the
maximum principle for $x_3$, since $E$ is not contained in a horizontal plane.
This proves the second statement in item~2
when Case~(G1) holds.  Thus item~2 will hold once we
prove item~3.

In the case that (G2) holds, the connection loop $c_{n}\subset \Omega_n$
(defined in the proof of Lemma~\ref{lemma4.2})
is homologous in $\ov{\D}-\{0\}$ to $\partial E$.
Since the planes in the limiting parking garage are horizontal by item~7 of
Lemma~\ref{lemma4.2}, then the ratio $\frac{|F(c_n)_V|}{|F(c_n)_H|}$
of the length of the vertical component $F(c_n)_V$ over the length of the
horizontal component $F(c_n)_H$ of the flux vector $F(c_n)$
converges to zero as $n\to \infty $.
As $F(c_n)_H=(F_E)_H$ by the divergence theorem, then
$|F(c_n)_V|$ tends  to $0$ as $n\to \infty$.  This is impossible, since
the arguments in obtaining item~\ref{eq:Fn} show that $|F(c_n)_V|\geq |(F_E)_V|>0$.
This contradiction gives that items~2 and 3 hold.

We next prove item~4. Taking vertical components in
the equality of item~1 and using that the limit $\be_{\infty }$ of the $\be_n$ is assumed to be finite,
we have that $\lim _nF(\wh{\G}(n))_V$ exists and equals $(F_{E})_V-2\pi\beta_{\infty} e_3$.
Hence it remains to show that
\begin{equation}
\label{eq:VE1}
V_E \ e_3 =\lim _nF(\wh{\G}(n))_V.
\end{equation}
To see this, we will describe $E\cap P_n$ for $n\in \N$ given. Observe that if $C$ is a noncompact component of $E\cap P_n$, then
$C$ is a noncompact embedded arc and each of the two ends of $C$ diverges to the same annular
end of $E$, which is therefore a planar end asymptotic to $P_n$. Hence, after moving slightly the height
of $P_n$, we can assume that every component of $E\cap P_n$ is compact.
Next consider a (compact) component $C$ of $E\cap P_n$. By item~4 of Lemma~\ref{lemma4.2},
$C\subset E-\mbox{Int}(\Omega_n)$. By the claim in the first paragraph of the proof of Lemma~\ref{lemma4.2},
either $C=\wh{\G}(n)$ or $C$ bounds a disk in $\ov{\D}-\{ 0\} $.

Assume that $E$ contains a planar annular end. By embeddedness of $E$, all annular ends above $E$ (with the
ordering given by the Ordering Theorem) must be also planar. After replacing $E$ by another end representative
of its limit end, we
can assume that all the ends of $E$ are planar. In this case, $E\cap P_n=\wh{\G}(n)$ (otherwise,
there exists a component $C$ of $E\cap P_n$ such that $C$ bounds a disk $\Delta_C$ in $\ov{\D}-\{ 0\} $
by the last paragraph, and we contradict the maximum principle applied to $x_3|_{\Delta_C}$
as all the (finitely many) ends of $E$ in $\Delta _C$ are planar). Since
$E\cap P_n=\wh{\G}(n)$ for each $n\in \N$, then  (\ref{VE}) computed for $t=x_3(P_n)$ gives that $V_E\ e_3=F(\wh{\G}(n))_V$
for each $n\in \N$, from where (\ref{eq:VE1}) follows directly.

By the arguments in the last paragraph, we can assume that all the annular ends of $E$ have negative logarithmic growth.
Fix $n\in \N$. As $E-\Omega _n$ lies locally above $P_n$ along $\wh{\G}(n)$ (by item~4 of Lemma~\ref{lemma4.2})
and every annular end of $E$ in $E-\Omega_n$ is represented by a punctured disk that lies entirely below $P_n$, then
we conclude that $E\cap P_n$ consists of $\wh{\G}(n)$ together with infinitely many compact components $C_i(n)$,
$i\in \N$, each of which bounds a disk $\Delta_{C_i(n)}$ in $\ov{\D}-\{ 0\} $ that contains a finite positive number of catenoidal type
ends of $E$. Therefore,  (\ref{VE}) computed for $t=x_3(P_n)$ gives that
\begin{equation}
\label{VE2}
V_E\ e_3=F(\wh{\G}(n))_V+\sum _{i\in \N}F(C_i(n)),
\end{equation}
where $F(C_i(n))$ is the (vertical) flux vector of $E$ along $\partial \Delta_{C_i(n)}$ computed with the unit conormal
vector that points outwards from $\Delta_{C_i(n)}$ along its boundary. Observe that given $n,i\in \N$, the divergence theorem
gives that $F(C_i(n))$ equals $e_3$ times a finite sum of logarithmic growths of annular ends of $E$. As the
sequence of domains $\{ \Omega_n\} _n$ forms an increasing exhaustion of $E$, then given $n,i\in \N$, there exists
$k\in \N$ sufficiently large so that all annular ends in $\Delta _{C_i(n)}$ lie in the closure of $\Omega_{n+k}$ in $\ov{\D}-\{ 0\} $.
This observation and (\ref{VE2}) imply that (\ref{eq:VE1}) holds, and the proof of item~4 is complete.

We next show items~5 and 6. Using item~2
we have
\[
\l_nF(\wh{\G}(n))_H=\l_n(F_E)_H=\l_n(h,0,0)
\]
for each $n\in \N$. If Case~(G1-c) occurs, then the left-hand-side of the
last equation tends to the nonzero horizontal component of the flux $F(M_{\infty })$
of $M_{\infty }$, which implies that the $\l _n$ converge to a finite positive
number $\l _{\infty}$. Taking vertical components in item~\ref{eq:Fn} we have
\begin{equation}
\label{eq:aaa}
\l_nF(\wh{\G}(n))_V=\l_n[(F_E)_V-2\pi \be_ne_3].
\end{equation}
Taking $n\to \infty $ in (\ref{eq:aaa}), we obtain $\langle F(M_{\infty }),e_3\rangle =\l _{\infty }(\tau -2\pi \beta _{\infty })$,
hence $\be_{\infty }$ is finite (and negative, as $\be _n$ is nonpositive for every $n$) and
$F(M_{\infty})=\l _{\infty }(h,0,\tau -2\pi \beta _{\infty })$.

If Case~(G1-b) happens, then the horizontal component of the flux of $M_{\infty }$ is zero and thus,
a similar reasoning shows that $\frac{|F(\wh{\G}(n))_H|}{|F(\wh{\G}(n))_V|}\to 0 $, hence
the $\be_n$ diverge to $-\infty $ and the $\l_n$ converge to zero. This finishes the proof of the corollary.
\end{proof}

In the remainder of this section, we will assume that $E$ satisfies the normalization
stated in Corollary~\ref{fluxestimates}, and we will also use the notation in that corollary.

\begin{lemma}
\label{lemma4.5}
Suppose $\{ 	p'_n\} _n\subset E$ is a divergent sequence such that $\{ I_E(p'_n)\} _n$ is bounded.
Then, $\beta_{\infty }$ is finite and a subsequence of the surfaces
$E-p'_n$ converges smoothly on compact sets of $\R^3$
with multiplicity one to the Riemann minimal example with horizontal ends
and flux vector $(h,0,\tau-2\pi \be_{\infty })$.
\end{lemma}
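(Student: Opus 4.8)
The plan is to leverage the rescaling-by-topology machinery already set up before Lemma~\ref{lemma4.5} and combine it with the flux identities from Corollary~\ref{fluxestimates}. Given the divergent sequence $\{p'_n\}_n\subset E$ with $\{I_E(p'_n)\}_n$ bounded, say $I_E(p'_n)\le C$, the first step is to show that a subsequence of $\{E-p'_n\}_n$ has uniformly bounded geometry on compact sets. Since $I_E$ is bounded away from zero outside a compact neighborhood of $\partial E$ (this is the conclusion of Theorem~\ref{propos3.4}, i.e.\ Case~(G1) holds and $S(\cL)=\mbox{\O}$), the surfaces $E-p'_n$ are uniformly locally simply connected in $\R^3$, so by Theorem~2.2 in~\cite{mpr8} a subsequence converges to a minimal lamination $\cL'$ of $\R^3$ with empty singular set. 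Because $I_E(p'_n)$ stays bounded, no leaf through a limit of the (translated) base points can be a plane, so $\cL'$ is a single properly embedded minimal surface $M'_\infty$ of genus zero (the $E-p'_n$ have genus zero), and the convergence is smooth with multiplicity one. By the classification (Collin, L\'opez--Ros, Meeks--P\'erez--Ros) $M'_\infty$ is a catenoid or a Riemann minimal example, the helicoid being excluded since $I_E(p'_n)$ is bounded above.

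**Next I would rule out the catenoid.** The key is a flux comparison: by item~2 of Corollary~\ref{fluxestimates}, every curve $\wh\G(n)$ has horizontal flux $(F_E)_H=(h,0,0)$ with $h>0$, and more generally (by the divergence theorem, since all catenoidal ends of $E$ have vertical flux) any homologically nontrivial loop in $\ov\D-\{0\}$ has horizontal flux component exactly $(h,0,0)$. Pick a fixed closed curve $\Gamma\subset M'_\infty$ that is homologically nontrivial (the waist circle if $M'_\infty$ is a catenoid, a separating circle between the two limit ends if it is a Riemann example). For $n$ large, $\Gamma$ is approximated by a curve $\Gamma_n\subset E-p'_n$, i.e.\ a curve in $E$ translated by $-p'_n$; translation does not change flux. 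Now $\Gamma_n$, viewed in $\ov\D-\{0\}$, is either null-homologous — in which case its flux is a sum of vertical fluxes of catenoidal ends, hence vertical — or homologically nontrivial, in which case its horizontal flux is $(h,0,0)\ne 0$. But a catenoid waist circle $\Gamma$ has horizontal flux zero and nonzero vertical flux; passing to the limit forces $F(\Gamma_n)_H\to 0$, which is compatible only with $\Gamma_n$ null-homologous for all large $n$, and then $F(\Gamma_n)$ is vertical, hence $F(\Gamma)$ is vertical — consistent with a catenoid, so this alone does not close it. The cleaner argument: if $M'_\infty$ were a catenoid, run exactly the contradiction of Proposition~\ref{cat} (Cases (D1)--(D3)) on the sequence $E-p'_n$ — but $E$ is now already known to be proper and to have $I_E$ bounded below away from $\partial E$, so the forming of an almost-catenoid $C_n$ of shrinking-or-fixed scale inside $E$ violates the lower injectivity-radius bound directly. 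Hence $M'_\infty$ is a Riemann minimal example; since $M'_\infty$ has genus zero and two limit ends, and its horizontal flux is $(h,0,0)$ while its existence forces $\be_\infty$ finite by item~5 of Corollary~\ref{fluxestimates}, we conclude $\be_\infty\in(-\infty,0]$ is finite.

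**Then I would pin down the normalization and the scale.** Once $\be_\infty$ is known finite, item~5 of Corollary~\ref{fluxestimates} (and its proof) shows $\l_\infty=\lim\l_n$ is a positive number and the flux of the blow-up limit $M_\infty$ at the base points $p_n$ is $\l_\infty(h,0,\tau-2\pi\be_\infty)$. For the present sequence $\{p'_n\}_n$ the scaling factor is $1$ (we translate but do not dilate), so the flux of $M'_\infty$ equals $\lim_n F(\Gamma_n)$ where $\Gamma_n$ is a homologically nontrivial loop in the relevant domain $\Omega_{k(n)}$, which by item~1 of Corollary~\ref{fluxestimates} equals $F_E-2\pi\be_n e_3$, converging to $F_E-2\pi\be_\infty e_3=(h,0,\tau-2\pi\be_\infty)$. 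Therefore $M'_\infty$ is the Riemann minimal example with flux vector $(h,0,\tau-2\pi\be_\infty)$; that its limit tangent plane at infinity is horizontal follows because the horizontal component $(h,0,0)$ of its flux is nonzero and Theorem~6 in~\cite{mpr3} says a Riemann example with nonvertical flux has nonhorizontal planar ends — wait, that gives nonhorizontal; the correct normalization is forced by item~7 / item (6-b) of Lemma~\ref{lemma4.2} together with the fact that the approximating planar sections $\wh\G(n)$ lie in horizontal planes, so the circles of $M'_\infty$ are horizontal and its limit tangent plane at infinity is horizontal. Finally, the multiplicity-one smooth convergence of $E-p'_n$ to $M'_\infty$ on compact sets is exactly what Theorem~2.2 in~\cite{mpr8} delivers in Case~(G1) with empty singular set.

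**The main obstacle** I anticipate is the bookkeeping that identifies the homology class of the approximating curves $\Gamma_n$ inside $\ov\D-\{0\}$ and shows it must be the nontrivial one — this is what transfers the nonzero horizontal flux $h$ to the limit and simultaneously excludes the catenoid. Equivalently, one must show the base points $p'_n$ "wind around" the limit end of $E$ in the appropriate sense; this is handled by the same separation arguments used in the proof of Lemma~\ref{lemma4.2} (the claim in its first paragraph, that a horizontal plane meets $E$ in exactly one homologically nontrivial compact component), combined with the fact that $I_E(p'_n)$ bounded forces the local picture around $p'_n$ to be a fixed-scale piece of a Riemann example rather than a degenerating catenoid. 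Once that is in place, the flux identities of Corollary~\ref{fluxestimates} do the rest mechanically.
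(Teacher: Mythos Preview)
Your proposal has the right overall shape but contains two genuine gaps.

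\textbf{First gap: the parking garage case is not handled.} You claim that since $I_E$ is bounded away from zero outside a neighborhood of $\partial E$, the sequence $E-p'_n$ converges to a lamination with empty singular set. This is not justified. A uniform positive lower bound on the injectivity radius does \emph{not} prevent curvature blow-up along the sequence of translates; it only guarantees that the sequence is locally simply connected. Theorem~2.2 in~\cite{mpr8} then still allows the limit to be a parking garage structure with two columns (the one-column case is indeed excluded by the upper bound on $I_E(p'_n)$). You also seem to conflate the dichotomy (G1)/(G2) established earlier for the \emph{specific rescaled} sequence $\wt E_n=\l_n[E\cap\ov\B(p_n,r_n)-p_n]$ with the present \emph{unrescaled} sequence $E-p'_n$; item~3 of Corollary~\ref{fluxestimates} ruled out (G2) for the former, not for the latter. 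The paper treats this explicitly: assuming the Gaussian curvature of $\{E-p'_n\}_n$ fails to be locally bounded, it invokes Theorem~2.2 in~\cite{mpr8} to get a two-column parking garage limit, and then repeats the flux argument from the proof of item~3 of Corollary~\ref{fluxestimates} to reach a contradiction. You need to insert this case.

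\textbf{Second gap: the exclusion of the catenoid is muddled.} Your three attempts do not work as written. The pure flux argument stalls exactly where you notice: if the approximating loops $\Gamma_n$ were null-homologous in $\ov\D-\{0\}$, their flux would be vertical, which is consistent with a catenoid. Rerunning Proposition~\ref{cat} is not appropriate here, since that proposition concerns the blow-up sequence with $\l_n\to\infty$, not fixed-scale translates. And a fixed-scale almost-catenoid does \emph{not} violate any lower injectivity-radius bound. The paper's route is the one you flag as the ``main obstacle'' but do not carry out: adapt the arguments of Lemma~\ref{lemma4.2} to show that (I1) the waist circle (resp.\ each circle, in the Riemann case) of the limit $L$ is approximated by curves $\a_n\subset E$ that wind once around $0$ in $\ov\D-\{0\}$, and (I2) the annular ends of $L$ are horizontal. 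Once (I1) holds, $F(\a_n)_H=(h,0,0)\neq 0$ for all $n$ by item~2 of Corollary~\ref{fluxestimates}, which is impossible for a vertical catenoid. Establishing (I1) requires real work (essentially the argument of Claim~\ref{claim4.3}), and (I2) comes from the maximum-principle arguments in the proof of Lemma~\ref{lemma4.2}; you should spell these out rather than defer them.

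Once these two points are fixed, your computation of the flux of the Riemann limit and of the finiteness of $\be_\infty$ via Corollary~\ref{fluxestimates} is correct and matches the paper's.
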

\begin{proof}
First assume that the Gaussian curvature of the sequence
$\{ E-p'_n\} _n$ is locally bounded in $\R^3$.
Then, a subsequence of $\{ E-p'_n\}_n$ converges to a minimal
lamination $\mathcal{L}$ of $\R^3$ with a nonsimply connected leaf $L$
passing through the origin and genus zero. By Theorem~7 in~\cite{mpr3},
$L$ is proper. By the Halfspace Theorem, $L$ is the unique leaf of $\mathcal{L}$. Since $L$ has genus zero, then $L$ is either a catenoid or a
Riemann minimal example; in particular, the convergence of
$E-p'_n$ to $L$ is of multiplicity one.
Similar arguments as those in the proof of Lemma~\ref{lemma4.2}
imply that:
\begin{enumerate}[({I}1)]
\item If $L$ is a catenoid (resp. a Riemann minimal example), then the waist curve of $L$ (resp.
 each circle contained in $L$) is the limit as $n\to \infty $ of closed curves
$\a _n\subset \overline{\D}(*)$ that wind once around the limit end $\vec{0}$ of $E$ in the parameter domain
$\overline{\D}(*)$ of $E-p'_n$.
\item  The annular ends of $L$ are horizontal.
\end{enumerate}
Note that the horizontal component $F(\a_n)_H$ of the flux of $E-p'_n$ along
$\a_n$ is independent of $n$ and nonzero (by item~2 of Corollary~\ref{fluxestimates}),
which is clearly impossible if $L$ is a vertical catenoid.
This proves that $L$ is a Riemann minimal example. The fact that the flux of $L$ is  $(h,0,\tau-2\pi \be_{\infty })$
comes from taking limits in the fluxes of the curves $\a_n$ and using the arguments in the proof of item~5 of Corollary~\ref{fluxestimates}.
This completes the proof of the lemma provided that the Gaussian curvature of
$\{ E-p'_n\} _n$ is locally bounded in $\R^3$.

Now assume that the Gaussian curvature of $\{ E-p'_n\} _n$ fails to be locally bounded in $\R^3$.
As $I_E$ is bounded away from zero outside every $\ve$-neighborhood of $\partial E$ by
Theorem~\ref{propos3.4}, then Theorem~2.2 in~\cite{mpr8}
ensures that after choosing a subsequence, $\{ E-p'_n\} _n$
converges to a minimal parking garage structure with two columns (the
one-column case of a limiting parking garage structure is ruled out because
$I_E(p'_n)$ is bounded from above by assumption). Similar
arguments as in the proof of item~3 of Corollary~\ref{fluxestimates} lead
to a contradiction, which completes the proof of the lemma.
\end{proof}

\subsection{Analysis of the Case~(G1) when $M_{\infty }$ is a Riemann minimal example.}
\label{sec4.2}

In this section, we will prove that Theorem~\ref{thm1.3}
holds provided that Case~(G1) occurs and
that the limit surface $M_{\infty }$ of the surfaces $\wt{E}_n$ given by (\ref{eq:tildeEn})
is a Riemann minimal example. 

Let $\mathcal{R}$ be the Riemann example with horizontal ends
and flux vector $(h,0,\tau-2\pi \be_{\infty })$, which is just a fixed rescaling of $M_{\infty }$
by item~5 of Corollary~\ref{fluxestimates}.
Recall that $\mathcal{R}$ is invariant under the $\pi $-rotation about infinitely many
horizontal straight lines $L_k$, $k\in \Z$, that intersect the surface orthogonally
(the lines $L_k$ are parallel to the
lines in which $\mathcal{R}$ intersects horizontal planes at the heights of its planar ends, and the heights of
$L_k$ are ordered by $k\in \Z$).
Given $k\in \Z$, let $A_1(k),A_2(k)\in \mathcal{R}$ the two points in which $L_k$ intersects $\mathcal{R}$.
For $i=1,2$, let $J_i^{\mathcal{R}}\subset \mathcal{R}$ be the integral curve of
the gradient of the third coordinate function $x_3$ of $\mathcal{R}$, passing through the points $A_i(k)$
for all $k\in \N$.
$J_2^{\mathcal{R}}$ is the reflected image of $J_1^{\mathcal{R}}$ with respect to the
vertical plane of symmetry of $\mathcal{R}$, and both $J_1^{\mathcal{R}},J_2^{\mathcal{R}}$
are properly embedded, periodic Jordan arcs, see Figure~\ref{fig10}  for a picture in a fundamental
region of $\mathcal{R}$.
 If we parameterize $\mathcal{R}$ conformally by a cylinder
$\esf^1\times \R$ so that $x_3$ corresponds to the projection over the second factor, then $J_i^{\mathcal{R}}$
corresponds to $\{ \theta_0 \} \times \R$ for certain $\theta_0\in \esf^1$. Observe that the image of
$J_1^{\mathcal{R}}$ through the Gauss map $N_{\mathcal{R}}$ of $\mathcal{R}$
is a simple closed curve $C\subset \esf^2$, and
if we parameterize $J_1^{\mathcal{R}}$ by $x_3$,
then the derivative of the argument of $g_{\mathcal{R}}(J_1^{\mathcal{R}}(x_3))$
is a positive (or negative) periodic function, where $g_{\mathcal{R}}$ denotes the
stereographic projection of $N_{\mathcal{R}}$ from the north pole of $\esf^2$; this last property follows from the
well-known fact that the Gauss map of a minimal surface and its conjugate minimal surface are the same, and
the conjugate surface of a Riemann example is another Riemann example, where the integral curves of the gradient
of $x_3$ correspond to circles in the conjugate surface.

\begin{proposition}
\label{propos4.6}
Let $C\subset \esf^2$ be closed curve defined in the last paragraph.
Then, after replacing $E$ by another end representative of the limit end,
the inverse image of $C$ through the Gauss map of $E$ consists of two disjoint,
proper Jordan arcs $J_1,J_2$ satisfying the following properties:
\begin{enumerate}
\item $I_E$ restricted to $J_1\cup J_2$ is bounded from above, and
\[
\limsup _{x\in J_1\cup J_2}I_E(x)=\limsup _{x\in J_1^\mathcal{R}\cup
J_2^{\mathcal{R}}}I_{\mathcal{R}}(x)<\infty.
\]
\item When viewed in $\R^3$, the unit tangent vector along $J_1\cup J_2$ makes an angle with the horizontal planes
which is bounded away from zero.
 \end{enumerate}
\end{proposition}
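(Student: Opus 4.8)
The plan is to transfer the structural picture of $\mathcal{R}$ to $E$ via the smooth convergence provided by Case~(G1). Since $M_\infty$ (after rescaling) is the Riemann example $\mathcal{R}$ and the surfaces $\wt{E}_n$ converge smoothly with multiplicity one to $\mathcal{R}$ on compact sets, and since by Lemma~\ref{lemma4.5} (together with item~5 of Corollary~\ref{fluxestimates}) \emph{every} divergent sequence $\{p'_n\}_n\subset E$ with $\{I_E(p'_n)\}_n$ bounded produces, after translation, a subsequence converging smoothly with multiplicity one to exactly this same $\mathcal{R}$, the geometry of $E$ near infinity is modeled on $\mathcal{R}$ in a uniform way. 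In particular the injectivity radius function $I_E$ is bounded away from zero on $E$ outside a neighborhood of $\partial E$ (by Theorem~\ref{propos3.4}), and along any sequence diverging to the limit end, $I_E$ stays bounded above as well (otherwise a blow-up would produce a parking garage, contradicting item~3 of Corollary~\ref{fluxestimates}). So $I_E$ is \emph{comparable to a constant} along sequences approaching the limit end.

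First I would use the conformal model: parametrize $E$ conformally as the punctured disk $\ov{\D}(*)$ and recall that the Gauss map $N_E$ extends to the annular ends (which have finite total curvature, hence extend holomorphically across the punctures) with limiting values in $\{(0,0,\pm1)\}$. The curve $C\subset\esf^2$ is a simple closed curve that (being the Gauss image of $J_1^\mathcal{R}$, an integral curve of $\nabla x_3$ on the Riemann example) is disjoint from the poles $(0,0,\pm1)$ and winds monotonically around the $x_3$-axis — this is exactly the statement that $\arg g_\mathcal{R}(J_1^\mathcal{R}(x_3))$ has monotone (periodic) derivative. Next I would show $N_E^{-1}(C)$ is a $1$-manifold: away from the limit end, on each compact piece of $E$ the convergence $\wt{E}_n\to\mathcal{R}$ forces the Gauss map of $E$ to be a graph-like perturbation of that of $\mathcal{R}$ near each $J_i^\mathcal{R}$, so $N_E$ is transverse to $C$ and $N_E^{-1}(C)$ is a disjoint union of embedded arcs and circles; since $C$ avoids the poles, $N_E^{-1}(C)$ misses the annular ends for $n$ (or height) large, so it is eventually contained in $\Omega_n$-complements that look like $\mathcal{R}$. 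Because $\mathcal{R}$'s inverse image $N_\mathcal{R}^{-1}(C)=J_1^\mathcal{R}\cup J_2^\mathcal{R}$ consists of exactly two proper arcs, and this picture persists under the convergence uniformly up to the limit end, I would conclude (after discarding finitely many components near $\partial E$ by replacing $E$ with a smaller representative) that $N_E^{-1}(C)$ consists of exactly two disjoint proper Jordan arcs $J_1,J_2$.

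Then item~1 follows: $I_E|_{J_1\cup J_2}$ is bounded above because a divergence along $J_1\cup J_2$ with $I_E\to\infty$ is impossible ($I_E$ is bounded on sequences approaching the limit end), and the precise value of the $\limsup$ equals $\limsup_{x\in J_1^\mathcal{R}\cup J_2^\mathcal{R}}I_\mathcal{R}(x)$ by the smooth multiplicity-one convergence of suitable translates $E-p'_n\to\mathcal{R}$ of Lemma~\ref{lemma4.5}: any sequence on $J_1\cup J_2$ realizing the $\limsup$ for $E$, after translating to the origin, converges to a point on $J_1^\mathcal{R}\cup J_2^\mathcal{R}$ (because the Gauss images converge to $C$, and on $\mathcal{R}$ the Gauss preimage of $C$ is exactly $J_1^\mathcal{R}\cup J_2^\mathcal{R}$) with the same injectivity radius, and conversely any point of $J_1^\mathcal{R}\cup J_2^\mathcal{R}$ is approximated by points of $J_1\cup J_2$. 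For item~2, the unit tangent vector to $J_i^\mathcal{R}$ is (up to sign) $\nabla x_3/|\nabla x_3|$ on $\mathcal{R}$, which makes a definite angle bounded away from horizontal since $\mathcal{R}$ is non-flat and its Gauss image along $J_i^\mathcal{R}$ stays off the equator — equivalently $|\nabla x_3|$ is bounded away from zero along $J_i^\mathcal{R}$; transferring this via the uniform smooth convergence gives the same bound for $J_1\cup J_2$ on $E$.

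The main obstacle I anticipate is the ``uniformity up to the limit end'' argument: the convergence $\wt{E}_n\to\mathcal{R}$ is only on compact sets of $\R^3$ after intrinsic blow-up, so one must upgrade this to a statement controlling \emph{all} far-out pieces of $E$ simultaneously. The right tool is Lemma~\ref{lemma4.5}, which says the convergence to $\mathcal{R}$ is \emph{independent of the diverging base point} (same limit surface, same flux, multiplicity one), so the local geometry of $E$ is asymptotically that of a fixed $\mathcal{R}$ everywhere near the limit end; a compactness/contradiction argument then promotes the topological count ``$N^{-1}(C)=$ two arcs'' and the two quantitative bounds from $\mathcal{R}$ to $E$. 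Keeping track of the finitely many extra components of $N_E^{-1}(C)$ that may appear near $\partial E$, and absorbing them by passing to a smaller end representative, is a routine but necessary bookkeeping step.
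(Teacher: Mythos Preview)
Your proposal has a genuine gap. The core claim that ``$I_E$ is bounded on sequences approaching the limit end'' (and hence on $J_1\cup J_2$) is exactly what must be proved, and your justification for it --- ``otherwise a blow-up would produce a parking garage, contradicting item~3 of Corollary~\ref{fluxestimates}'' --- does not work. Item~3 of Corollary~\ref{fluxestimates} rules out the \emph{two}-column parking garage arising from the specific blow-up sequence $\wt{E}_n$ in~(\ref{eq:tildeEn}); it says nothing about what happens when $I_E(p'_n)\to\infty$ along an arbitrary divergent sequence. In fact the paper shows (Assertion~\ref{ass4.7}) that when $I_E(p'_n)\to\infty$ for $p'_n\in N^{-1}(C)$, the blow-down $\frac{1}{\sqrt{n}}(D_n-p'_n)$ converges to a \emph{single}-column parking garage, and then uses the helicoidal local model to conclude that the tangent line to $N^{-1}(C)$ at $p'_n$ is bounded away from horizontal. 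This is not a contradiction but an ingredient. Lemma~\ref{lemma4.5} only applies under the hypothesis that $\{I_E(p'_n)\}_n$ is bounded, so you cannot invoke it until you have produced such points; the paper does this in Assertion~\ref{asser4.9} via an intermediate-value argument, locating heights $t_n''$ where $I_E(J_i(t_n''))$ equals a prescribed finite value, and only then applying Lemma~\ref{lemma4.5}.

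A second gap is the passage from ``$N_{\mathcal R}^{-1}(C)$ is two arcs'' to ``$N_E^{-1}(C)$ is two arcs (after passing to a subend)''. You describe this as routine bookkeeping, but it is not: between two consecutive regions $\mathcal{R}_0(\ve)\subset E$ there is a $\Delta$-domain containing a finite positive number of annular ends of $E$, and a priori $N^{-1}(C)$ could have closed components encircling several of them. The paper rules this out in two steps: Assertion~\ref{ass4.8} shows (via a flux argument using Lemma~\ref{lemma4.5} and Assertion~\ref{ass4.7}) that after passing to a subend each $\Delta$-domain contains exactly one annular end; then, using the resulting slab decomposition $\{S_n\}$, closed components of $N^{-1}(C)\cap S_n$ are eliminated by the dichotomy (bounded $I_E$ $\Rightarrow$ Lemma~\ref{lemma4.5} gives $\mathcal R$-like geometry with no horizontal tangents on $N^{-1}(C)$; unbounded $I_E$ $\Rightarrow$ Assertion~\ref{ass4.7} again forbids horizontal tangents). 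Without these steps the component count is unjustified.
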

\begin{proof}
After a small perturbation of $C$ (by the Sard-Smale theorem), we can assume that the Gauss map $N$ of $E$ is transverse to $C$.
In particular, $N^{-1}(C)$ consists of a proper
(possibly disconnected) 1-dimensional submanifold of $E$; after replacing $E$ by a subend, we may assume that
the geometry of $E$ near $\partial E$ is close to the one of $\mathcal{R}$ and thus,
$N^{-1}(C)$ intersects $\partial E$ transversely at two points. Observe that
the tangent plane to $E$ along $N^{-1}(C)$ is bounded away from the horizontal.
The proposition will be a consequence of three assertions.

\begin{assertion}
\label{ass4.7}
If $I_E(p'_n)\to \infty$ for a sequence of points
$p'_n\in N^{-1}(C)$, then for $n$ large $N^{-1}(C)$ makes an angle
with the horizontal at $p'_n$ which is bounded away from zero.
\end{assertion}
\begin{proof}
Let $J(n)$ denote the component of $N^{-1}(C)$ that contains $p'_n$. Arguing by contradiction,
we may assume that the tangent line to
$J(n)$ at $p'_n$ makes an angle less than $\frac1n$ with the horizontal and $I_E(p'_n)$ is much greater than $n$.
Since $I_E(p'_n)\to \infty$, Proposition~1.1 in~\cite{cm35} ensures that
after replacing by a subsequence, we may assume that $p'_n$ lies in a compact minimal disk
$D_n\subset \ov{\B}(p'_n,n)\cap E$ with $\partial D_n \subset \partial \overline{\B}(p'_n,n)$.
Since the vertical component of the flux $V_E=\tau-2\pi \be_{\infty }$
is finite and the tangent plane to $E$ along $N^{-1}(C)$ is bounded away from the
horizontal, then for $n$ large, there exist constants an $C,R_0\in (0,n/3)$ depending on $V_E$
and there is a point $q'_n\in \B(p'_n,R_0)\cap E$ where the absolute Gaussian curvature of $E$ is
at least $C$.  It then follows from Theorem~0.1 in~\cite{cm23} that a subsequence of
the disks
\[
\Sigma_n=\frac{1}{\sqrt{n}}(D_n-p'_n)\subset \ov{\B}(\sqrt{n})
\]
converges on compact subsets of $\rth$
to a minimal parking garage structure $\cF$ with
a single column being a straight line $L$ passing through the origin and orthogonal to the planes in $\cF$,
see also Meeks~\cite{me25,me30}.  Since $V_E$ is finite, $L$ is the $x_3$-axis.

It follows from~\cite{me20} that for $n$ large, $\Sigma_n(1)=\Sigma_n\cap \ov{\B}(\vec{0},1)$
contains a compact arc $\a_n$ along which $\Sigma_n(1)$ has vertical tangent
spaces, and $\a_n$ is converging $C^1$ to the line segment  $\{(0,0,t) \mid t\in [-1,1]\}$;
this last result can be found in Meeks~\cite{me20}.  Some further refinements by
Meeks, P\'erez and Ros (Corollary~4.27 in~\cite{mpr14})
give that at every point $x\in\a_n$ and for $n$ large enough, $\Sigma _n(1)$ nearby $x$
can be closely approximated in the $C^2$-norm by compact domains of a shrunk vertical
helicoid $H_x$ whose axis contains $x$, and in the complement  of a small tube $T_n$
around $\a_n$ containing the forming vertical (scaled) helicoids, the remaining surface
$\Sigma_n(1)-T_n$ consists of almost horizontal multigraphs with an arbitrarily large number
of sheets for $n$ large.  In particular, the corresponding curve
$\frac{1}{\sqrt{n}}(J(n)-p'_n)$
lies in $T_n$. Since the inverse image $J_H$ of $C\subset\esf^2$ by the Gauss map
on a vertical helicoid makes an angle with the horizontal
that is bounded away from zero (because if we use conformal coordinates $\rho e^{i\theta}$
defined on $\C-\{ 0\} $ for the helicoid so that the polar angle $\t$ corresponds to height in $\R^3$,
then $J_H$ can be parameterized by $\t$, and the angle of $J_H(\t)$ with the horizontal has constant
positive derivative with respect to $\t$).
Thus, for $n$ sufficiently large the same property holds for $J(n)$ near $p'_n$.
This contradiction completes the proof of Assertion~\ref{ass4.7}.
\end{proof}

Let $c_0,c_1\subset \mathcal{R}$ be the horizontal circles passing through the points
$A_1(0),A_1(1)$ defined just before the statement of
Proposition~\ref{propos4.6}. Let {\it Cyl} be a solid, compact
vertical cylinder whose axis passes through the branch point of $N_{\mathcal{R}}$ at height
$\frac{1}{2}(x_3(c_0)+x_3(c_1))$, of radius $r>0$ large enough so that
$[J^{\mathcal{R}}_1\cup J^{\mathcal{R}}_2]\cap x_3^{-1}((x_3(c_0),x_3(c_1)))$
is contained in the interior of {\it Cyl}, and such that the top and bottom disks in $\partial
\mbox{\it Cyl}$ contain the circles $c_1, c_0$, respectively, see Figure~\ref{fig10}.
\begin{figure}
\begin{center}
\includegraphics[width=9cm]{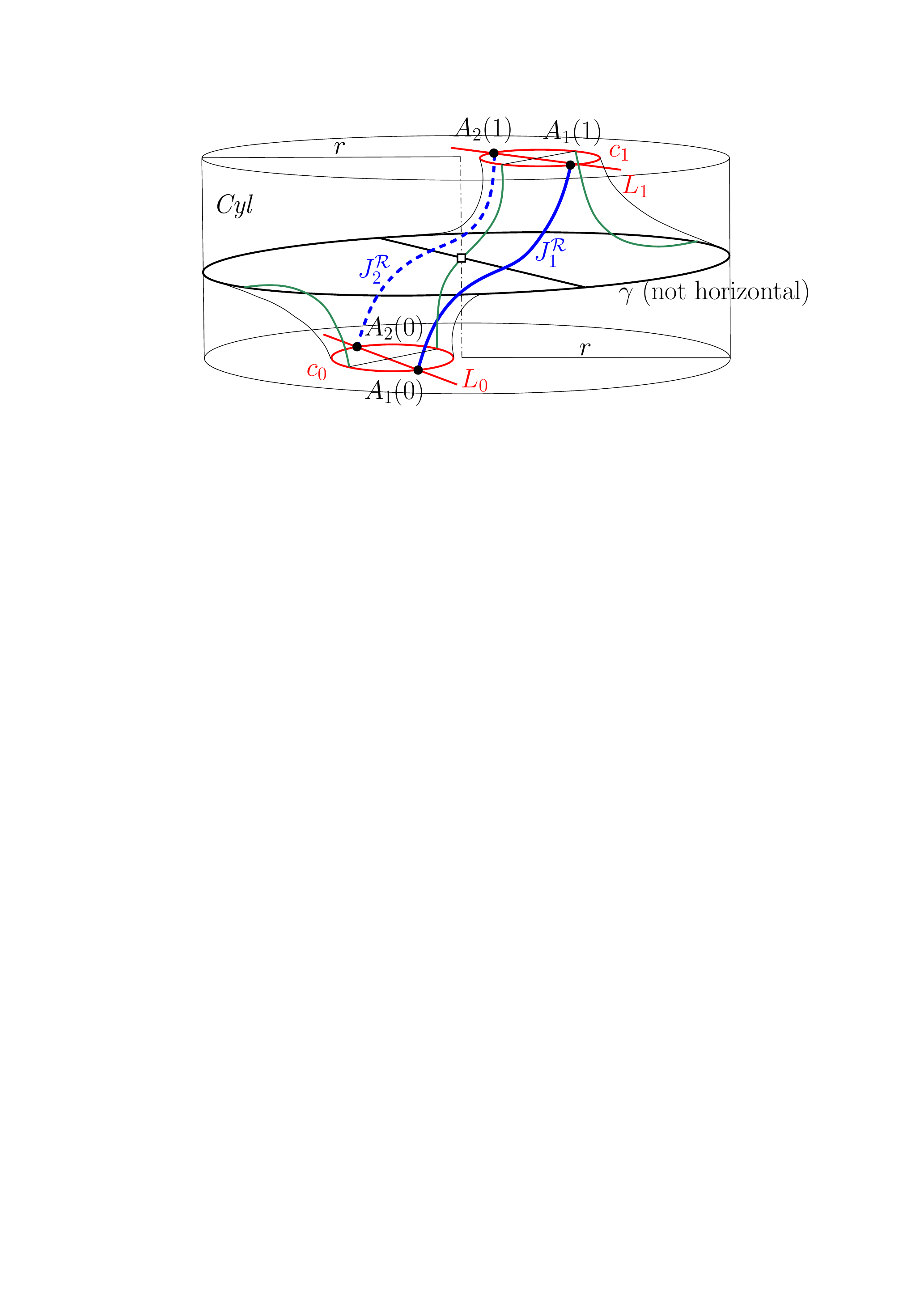}
\caption{The small square in the center of the figure represents the branch point of the Gauss map $N_{\mathcal{R}}$ of $\mathcal{R}$
whose height is the average of the heights of the circles $c_0,c_1$. The green curves represent the intersection
of $\mathcal{R}\cap \mbox{\it Cyl}$ with the symmetry plane of $\mathcal{R}$.}
\label{fig10}
\end{center}
\end{figure}
By construction, the side of {\it Cyl} intersects $\mathcal{R}$ in an almost horizontal closed curve $\g$
(for $r$ large enough) that winds once around the axis of {\it Cyl}, and $N_{\mathcal{R}}(\g)$
is a closed spherical curve arbitrarily close to $(0,0,1)\in \esf^2$ that winds twice around $(0,0,1)$ (we can
assume that $(0,0,1)$ is the extended value of $N_{\mathcal{R}}$ at the planar end of $\mathcal{R}$
between the heights of $c_0$ and $c_1$). Let $\mathcal{R}_0$ be the compact subdomain $\mathcal{R}\cap
\mbox{\it Cyl}$ of $\mathcal{R}$.

Given $\ve >0$ small, let $\mathcal{R}_0(\ve)\subset E$ be a compact subdomain which is
$\ve$-close to $\mathcal{R}_0$ in the Hausdorff distance in $\R^3$, which exists since
the smooth limit of translations of $E$ is $\mathcal{R}$. We may assume that
$\partial \mathcal{R}_0(\ve)$ consists of three components $c_0(\ve),c_1(\ve),\g (\ve)$,
so that $c_0(\ve),c_1(\ve)$ are horizontal convex curves and $\g (\ve)$ is a Jordan curve whose
image by $N$ is at positive spherical distance from $C$.
We may take $\ve $ sufficiently small so that $N^{-1}(C)\cap \mathcal{R}_0(\ve)$
consists of two disjoint arcs, each one joining $c_0(\ve)$ to $c_1(\ve)$.
 The complement of $\mathcal{R}_0(\ve)$ in $\ov{\D}-\{ 0\} $ consists of three components,
namely two annular components of which one contains $\partial E$ and another one contains $0$, and a
disk component $\Delta$. By the convex hull property, $\Delta $ contains a finite positive
number of annular ends of $E$. Observe that we can choose an infinite sequence of
pairwise disjoint domains of the type $\mathcal{R}_0(\ve)$ in $E$, so that the sequence collapses to the origin when
viewed in $\ov{\D}-\{ 0\} $.

\begin{assertion}
\label{ass4.8}
After replacing $E$ by a limit subend, every such a domain $\Delta $ contains
exactly one annular end of $E$.
\end{assertion}
\begin{proof}
Arguing by contradiction, assume that we have a sequence
$\Delta_n$ of such domains so that $\Delta _n$ contains at least two annular ends of $E$. As
the limiting normal vector of $E$ at its annular ends is vertical and
$N^{-1}(C)\cap \partial \Delta _n=\mbox{\O}$, then a simple continuity argument gives that
$N^{-1}(C)\cap \Delta _n$ contains a finite positive number of components, each of which is a Jordan curve.
Choose one of these Jordan curves $\be_n\subset N^{-1}(C)\cap \Delta_n$. If for each $n\in
\N$ there exists a point $p'_n\in \be_n$ so that the sequence $\{ I_E(p'_n)\} _n$ is bounded,
then Lemma~\ref{lemma4.5} implies that after extracting a subsequence, the $E-p'_n$ converge
 smoothly with multiplicity one to $\mathcal{R}$. Note that the sequence
$\{ (\Delta _n\cap E)-p'_n\} _n$ also converges to $\mathcal{R}$ (because the
intrinsic distance from
$\g_n(\ve)$ to $\be_n$ goes to infinity as $n\to \infty $). This is impossible, as every closed
curve in $\Delta _n\cap E$ has vertical flux but $\mathcal{R}$ does not have this property.
Therefore, the sequence of numbers $\{ \min I_E(x)\ | \ x\in \be_n\} _n$ goes to $\infty $.
In this situation, Assertion~\ref{ass4.7} gives a contradiction by taking for each $n\in \N$
a point $x_n\in \be_n$ of maximum height in $\R^3$ (which exists since $\be_n$ is a
Jordan curve). This finishes the proof of Assertion~\ref{ass4.8}.
\end{proof}

By Assertion~\ref{ass4.8}, after replacing $E$ by a limit subend, we assume that every $\Delta$-domain
as defined in the description just before the statement of Assertion~\ref{ass4.8}, contains one end of $E$.
By the last sentence before Assertion~\ref{ass4.8},
these $\Delta $-domains occur in a sequence collapsing to the limit end of $E$.
By the Gauss-Bonnet formula, the total Gaussian curvature of the annulus $\Delta\cap E$ is arbitrarily small
by choosing $\mathcal{R}_0(\ve)$ appropriately. Therefore, $\Delta \cap E$ is a graph over its projection
into the $(x_1,x_2)$-plane, of a function with small length of its gradient (the maximum of the length of
the gradient of the graphing function occurs at $\partial \Delta$). By gluing $(\Delta \cap E)\cup \mathcal{R}_0(\ve)$
with the two planar disks bounded by $c_0(\ve)\cup c_1(\ve)$, we obtain a piecewise smooth
topological plane~$\Pi $, which is properly embedded in $\R^3$. Take a maximal collection $\{ \Pi_n\} _n$ of
such topological planes, so that $\Pi _n\cap \Pi _m=\mbox{\O}$ if $n\neq m$. $\R^3-\bigcup_{n\in \N}\Pi_n$
consists of a countable union of open components, each of which is a topological slab $S_n$.

Given such a slab $S_n$, observe that the closure of $N^{-1}(C)\cap S_n$ is a compact 1-manifold with four boundary
points. Therefore, $N^{-1}(C)\cap S_n$ consists of a finite number of Jordan curves plus two arcs. We first check
that for $n$ sufficiently large, $N^{-1}(C)\cap S_n$ does not contain Jordan curve components. Otherwise,
there exists a sequence of points $p'_n\in N^{-1}(C)\cap S_n$
where the tangent line to $N^{-1}(C)\cap S_n$
is horizontal. By Assertion~\ref{ass4.7}, $I_E(p'_n)$ must be
bounded. Thus, Lemma~\ref{lemma4.5} gives that after
extracting a subsequence, the $E-p'_n$ converge smoothly to
$\mathcal{R}$, which is impossible since on $\mathcal{R}$ the
corresponding set $N_{\cal R}^{-1}(C)=J_1^{\mathcal{R}}\cup J_2^{\mathcal{R}}$ satisfies that the angle
with the horizontal planes is bounded away from zero.
Therefore, $N^{-1}(C)\cap S_n$ does not contain Jordan curve components
that for $n$ sufficiently large, and the same argument proves that the two compact arcs in
the closure of $N^{-1}(C)\cap S_n$
make an angle with the horizontal planes which is bounded away from zero; in particular, each of these arcs joins
two boundary components of $S_n$. After replacing $E$ by a subend, we can assume
that $N^{-1}(C)$ consists of two proper arcs $J_1,J_2$ satisfying item~2 of the
proposition. In particular, each $J_i$ can be parameterized by the $x_3$-coordinate, $i=1,2$.

\begin{assertion}
\label{asser4.9}
Given $\de>0$, there exists $x_{3,0}=x_{3,0}(\de)\in \R$ such that if $x_3\geq x_{3,0}$, then
for $i=1,2$ it holds
\[
I_E(J_i(x_3))\leq \de +\limsup _{x\in J_1^\mathcal{R}\cup
J_2^{\mathcal{R}}}I_{\mathcal{R}}(x).
\]
\end{assertion}
\begin{proof}
Arguing by contradiction, suppose that the assertion fails. Then, there exists $\de >0$ and
a sequence of heights $t_n\to \infty $ so that $I_E(J_i(t_n))>\de +c$, where
$c=\limsup _{x\in J_1^\mathcal{R}\cup J_2^{\mathcal{R}}}I_{\mathcal{R}}(x)$.
After passing to a subsequence, we can assume that
given $n\in \N$, there exists $t'_n\in (t_n,t_{n+1})$ so that the related point $J_i(t_n')$ lies in a region of
the form $\mathcal{R}_0(\ve)$ where $E$ is $\ve$-close to a compact portion of $\mathcal{R}$.
Then, after taking $\ve $ much smaller than $\de $, we can assume that $I_E(J_i(t_n'))\leq
\frac{\de}{2}+c$. By continuity of $I_E\circ J_i$, there exists $t_n''\in (t_n,t_n']$ such that $I_E(J_i(t_n''))=
\frac{\de}{2}+c$ for each $n\in \N$. Applying Lemma~\ref{lemma4.5} to
$p_n:=J_i(t_n'')$ we deduce that the $E-J_i(t_n'')$ converge (after extracting a subsequence)
smoothly to $\mathcal{R}$, which is impossible since the value of the injectivity radius
of  $E-J_i(t_n'')$ at the origin is $\frac{\de}{2}+c$ and the injectivity radius is a
continuous function with respect to smooth limits (see e.g. Ehrlich~\cite{ehr1} and Sakai~\cite{sa2}).
Now the assertion is proved.
\end{proof}
Finally, item~1 of Proposition~\ref{propos4.6} follows directly from Assertion~\ref{asser4.9}. Item~2
of Proposition~\ref{propos4.6} follows from Assertion~\ref{asser4.9}, Lemma~\ref{lemma4.5} and the fact
that the unit tangent vector along the curves $J_1^{\mathcal{R}}\cup J_2^{\mathcal{R}}$ makes an angle
with the horizontal planes which is bounded away from zero. This
completes the proof of the proposition.
\end{proof}

A direct consequence of Lemma~\ref{lemma4.5} and Assertion~\ref{asser4.9}
is that for every divergent sequence of points $p'_n\in J_1\cup
J_2$, the surfaces $E-p'_n$ converge smoothly to $\mathcal{R}$
after passing to a subsequence. This property together with
Assertion~\ref{ass4.8} imply that after replacing $E$ by a subend, $E$ consists of an infinite
number of noncompact pieces $M_n$,  each of which is has the topology of a pair of
paints with a point
removed (this puncture is one annular end of $E$), and the two compact boundary components
of $c_{0,n},c_{1,n}$ of $M_n$ can be taken arbitrarily close to translated copies of
the horizontal circles $c_0,c_1\subset \mathcal{R}$ defined in the paragraph just after the
proof of Assertion~\ref{ass4.7}. Furthermore, $c_{1,n}=c_{0,n+1}$ and
$M_n\cap M_{n+1}=c_{1,n}$ for all $n\in \N$.

We next explain why Theorem~\ref{thm1.3} holds in the Case~(G1) when $M_{\infty }$
is a Riemann minimal example. The main properness statement of Theorem~\ref{thm1.3}
was proven in Section~\ref{sec3}. Items 1, 2 of Theorem~\ref{thm1.3} were proven in the
second paragraph of this section~\ref{sec4}.  Item 3 of Theorem~\ref{thm1.3} follows from
Lemma~\ref{lemma4.2} and Corollary~\ref{fluxestimates}. In particular, items 1, 2, 3
of Theorem~\ref{thm1.3} also hold in the Case~(G1) when $M_{\infty }$ is a vertical catenoid.
Assume from now on that Case~(G1) occurs and $M_{\infty }$ is a Riemann minimal example. 
Item~4 of Theorem~\ref{thm1.3}
is a consequence of the last paragraph. The same description of $E$ as a union of domains
$M_n$ implies that the Gaussian curvature of $E$ is bounded, which is
item~5 of Theorem~\ref{thm1.3}. The next proposition completes the proof of Theorem~\ref{thm1.3}
in the Case~(G1) when $M_{\infty }$ is a Riemann minimal example.

\begin{proposition}
\label{propos4.11}
If Case~(G1) occurs and $M_{\infty }$ is a Riemann minimal example, then
 item~6 of Theorem~\ref{thm1.3} holds.
\end{proposition}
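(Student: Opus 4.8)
The plan is to determine the conformal type of $E$ from the decomposition
$E=\bigcup_{n\in\N}M_n$ obtained in the discussion preceding this proposition, by
means of a modulus (extremal length) estimate for the annular pieces. Recall that
each $M_n$ is a pair of pants with a point removed, the removed point being one
annular end $a_n$ of $E$; its two compact boundary circles are $c_{0,n},c_{1,n}$,
with $\partial E=c_{0,1}$, $c_{1,n}=c_{0,n+1}$, $M_n\cap M_{n+1}=c_{1,n}$, and (by
Lemma~\ref{lemma4.5}, Assertion~\ref{asser4.9} and Assertion~\ref{ass4.8}) there
exist points $q_n\in M_n$ such that $M_n-q_n$ converges smoothly, with
multiplicity one, on compact subsets of $\R^3$, to the fundamental piece
$\mathcal R'$ of $\mathcal R$ lying between the circles $c_0,c_1\subset\mathcal R$.
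Each end $a_n$ is a planar or catenoidal end, hence conformally a punctured disk,
so adjoining the missing point produces a Riemann surface $\wh M_n:=M_n\cup\{a_n\}$
which is topologically a closed annulus with boundary $c_{0,n}\cup c_{1,n}$;
accordingly $\wh E:=E\cup\{a_n\}_n=\bigcup_n\wh M_n$ is a Riemann surface which,
since $E$ is diffeomorphic to $\ov{\D}(\ast)$ (see property~(A1)), is homeomorphic
to $\{z\in\C\mid 0<|z|\le1\}$, with $\partial E$ corresponding to $\{|z|=1\}$, the
limit end of $E$ to the puncture $z=0$, and the ends $a_n$ to a sequence of
interior points accumulating only at $z=0$. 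I would first note that it suffices to
prove that this conformal structure on $\wh E$ is the standard one, i.e.\ that
$\wh E$ is conformally diffeomorphic to $\{0<|z|\le1\}$ with
$\partial E\leftrightarrow\{|z|=1\}$: granting this, $E=\wh E-\{a_n\}_n$ is
conformally $\{0<|z|\le1\}$ minus a sequence of points $b_n\to 0$, which is the
first statement of item~6 of Theorem~\ref{thm1.3}; and the second statement then
follows, since $\{0\}\cup\{b_n\}_n$ is a compact countable, hence polar, subset of
$\ov{\D}$, so that $\partial\D$ (equivalently $\partial E$) has full harmonic
measure.

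For the modulus estimate I would proceed as follows. Fix a simple closed curve
$\sigma_\infty\subset\mathcal R'$ parallel to $c_0$ inside a fixed ball
$\ov{\B}(R_0)$, together with a closed annular neighborhood
$A_\infty\subset\mathcal R'\cap\ov{\B}(R_0)$ of $\sigma_\infty$ with
$\mbox{mod}(A_\infty)=2\de>0$; note that $A_\infty$ is an essential subannulus of
the closed annulus $\wh{\mathcal R'}$ obtained by adjoining the point at the
(unique, planar) end of $\mathcal R'$. By the smooth multiplicity-one convergence
of $M_n-q_n$ to $\mathcal R'$ on $\ov{\B}(R_0)$, for all large $n$ there is a simple
closed curve $\sigma_n\subset M_n-q_n$, parallel in $M_n-q_n$ to $c_{0,n}-q_n$, and
a closed annular neighborhood $A_n\subset(M_n-q_n)\cap\ov{\B}(R_0)$ of $\sigma_n$
converging smoothly to $A_\infty$; in particular $\mbox{mod}(A_n)\to 2\de$. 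Then,
after translating back, $A_n$ is an essential subannulus of the closed annulus
$\wh M_n$, so $\mbox{mod}(\wh M_n)\ge\mbox{mod}(A_n)\ge\de$ for $n$ large; for the
finitely many remaining indices I would simply fix some essential subannulus
$A_n\subset\wh M_n$. Now $\wh\Si_N:=\wh M_1\cup\dots\cup\wh M_N$ is a compact
annulus having $\partial E$ as one of its boundary circles, and the
$A_1,\dots,A_N$ are pairwise disjoint essential subannuli of $\wh\Si_N$, so the
Gr\"otzsch superadditivity inequality for moduli of annuli in series gives
$\mbox{mod}(\wh\Si_N)\ge\sum_{n=1}^N\mbox{mod}(A_n)\to\infty$ as $N\to\infty$.

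To finish I would invoke the classical dichotomy for Riemann surfaces homeomorphic
to $\{z\in\C\mid 0<|z|\le1\}$: such a surface is conformally diffeomorphic either
to $\{0<|z|\le1\}$ or to $\{\rho<|z|\le1\}$ for a unique $\rho\in(0,1)$, the latter
having finite modulus $\tfrac1{2\pi}\log\tfrac1\rho$. Since $\{\wh\Si_N\}_N$
exhausts $\wh E$ by essential subannuli with $\mbox{mod}(\wh\Si_N)\to\infty$, the
finite-modulus alternative is impossible; hence $\wh E$ is conformally
$\{0<|z|\le1\}$ with $\partial E\leftrightarrow\{|z|=1\}$, and item~6 of
Theorem~\ref{thm1.3} follows as explained above. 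The one step that requires
care---and which I regard as the main, though routine, point---is the passage from
the geometric convergence of the pieces $M_n-q_n$ to $\mathcal R'$ to the
convergence $\mbox{mod}(A_n)\to 2\de$ of the conformal moduli; this is why I
arranged to use it only for the compact subannuli $A_n$, where it reduces to the
standard fact that $C^2$-convergence of Riemannian metrics on a fixed compact
surface with boundary implies convergence of the induced conformal structures,
hence of extremal lengths.
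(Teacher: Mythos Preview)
Your proof is correct and shares the same underlying mechanism as the paper's: both rule out the finite-modulus alternative for $\wh E$ by exhibiting infinitely many pairwise disjoint, essential, conformally ``thick'' subannuli coming from the Riemann-type pieces $M_n$. The difference is one of packaging. You invoke the Gr\"otzsch superadditivity of moduli together with the classical type dichotomy for half-open annuli, which is clean and conceptual. The paper instead argues by contradiction: assuming $\wh E$ is conformally $\{a<|z|\le 1\}$, it takes conformal embeddings $f_n\colon\esf^1\times[0,\de]\to E$ into the Riemann-type regions and shows, via the coarea formula and Cauchy--Schwarz, that each image has $g_0$-area (for the flat metric on the model annulus) at least $2\pi a^2\de$; infinitely many disjoint such images then contradict the finite total $g_0$-area. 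That area estimate is, of course, exactly a hands-on proof of the Gr\"otzsch inequality in this setting, so the two arguments are essentially equivalent. Your formulation has the advantage of citing standard conformal-geometry machinery directly; the paper's has the advantage of being self-contained. Your care in restricting the metric-convergence step to compact subannuli $A_n$ (so that $C^2$-convergence of metrics yields convergence of moduli) is the right way to handle the one delicate point.
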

\begin{proof}
Suppose that the proposition fails.
As each of the annular ends of $E$ has finite total curvature,
$E$ is conformally diffeomorphic to $\widehat{D} =\overline{\D} - \{ x \in
\D \mid |x| \leq a \}$ for some $a\in (0, 1)$, with a countable
discrete set of points $\{ e_n \}_{n \in \N}$ removed and where $|
e_n | \searrow a$ as $n \rightarrow \infty$.

By the above decomposition of $E$ as a countable union of
regions $M_n$, there exists $\de>0$ and a sequence $f_n\colon \esf^1 \times [0, \de ]\to E$
(here $\esf^1$ is the unit circle) of conformal embeddings with $f_n(\esf^1
\times [0,\de ])$ being arbitrarily close to a region $\mathcal{R}_0(\ve)$ of `Riemann type'
to which one attaches an annular end of $E$ (that might have negative logarithmic growth,
arbitrarily close to zero). Observe that the $f_n$ have pairwise disjoint images in $\R^3$
for different values of $n$.

Consider on $\widehat{D}$ the usual flat metric $g_0$. Next we
will show that the $g_0$-area of $f_n(\esf^1 \times [0, \de ])$ is at least $2\pi a^2\de $, which gives the
desired contradiction since the $g_0$-area of $\widehat{D}$ is
finite and we have an infinite number of such pairwise disjoint
embeddings $f_n$ in $\widehat{D}$.

To compute the $g_0$-area of $f_n(\esf^1 \times [0, \de ])$,
we will apply the coarea formula to the smooth function
$h_n\colon f_n(\esf^1 \times [0, \de ])\to \R$ that satisfies
\begin{equation}
\label{eq:cf1}
(h_n\circ f_n)(\t,t)=t,\quad \mbox{for all }(\t ,t)\in \esf^1 \times [0, \de ].
\end{equation}
Thus,
\begin{equation}
\label{eq:cf2}
{\rm Area}(f_n(\esf^1 \times [0,\de ]),g_0) =
 \int _0^{\de}\left( \int _{h_n^{-1}(t)}\frac{ds_t}{| \nabla _0h_n|}\right) dt,
\end{equation}
where $ds_t$, $|\nabla _0h_n|$ denote respectively the length element
of the simple closed curve $h_n^{-1}(t)=f_n(\esf^1 \times \{ t\} )$ and the
gradient of $h_n$, both computed with respect to $g_0$. Since $f_n$ is a
conformal diffeomorphism onto its image endowed with $g_0$, we deduce that
\begin{equation}
\label{eq:cf3}
v:=\frac{1}{\left| \frac{\partial f_n}{\partial t}(\t,t) \right|}\frac{\partial f_n}{\partial t}(\t,t)
\end{equation}
is a unit normal vector to the curve $f_n(\esf^1 \times \{ t\} )$ at the point
$f_n(\t,t)$. Hence, (\ref{eq:cf1}) and (\ref{eq:cf3}) give
\[
|\nabla _0h_n|(f_n(\t,t))=(dh_n)_{f_n(\t,t)}(v)=\frac{1}{\left| \frac{\partial f_n}{\partial t}(\t,t) \right|},
\]
which implies that the right-hand-side of (\ref{eq:cf2}) equals
\begin{equation}
\label{eq:cf4}
 \int _0^{\de}\left( \int _{f_n(\esf^1\times\{ t\})}\left| \frac{\partial f_n}{\partial t} \right|ds_t\right) dt
=\int _0^{\de}\left( \int _{\esf^1\times\{ t\}}\left| \frac{\partial f_n}{\partial t} \right| \, \left| \frac{\partial f_n}{\partial \t} \right| d\t \right) dt.
\end{equation}
Using again the conformality of $f_n$ in the right-hand-side of (\ref{eq:cf4})
and the Cauchy-Schwarz inequality, we obtain
\[
{\rm Area}(f_n(\esf^1 \times [0,\de ]),g_0) = \int _0^{\de}
\left( \int _{\esf ^1\times \{ t\} }\left| \frac{\partial
f_n}{\partial \theta }\right| ^2d\theta \right) dt \geq
\frac{1}{2\pi }\int _0^{\de} \left( \int _{\esf ^1\times \{ t\} }\left|
\frac{\partial f_n}{\partial \theta }\right| d\theta \right) ^2dt
\]
\[
=\frac{1}{2\pi }\int _0^{\de}[\mbox{length}(f_n(\esf ^1\times\{ t \}))]^2dt
\stackrel{(\star)}{\geq }
\frac{1}{2\pi }\int _0^{\de}(2\pi a)^2dt=2\pi a^2\de ,
\]
where in $(\star)$ we have used that $f_n(\esf^1 \times \{ 0 \})$ is a loop in
$\widehat{D} $ that is parallel to $\partial \widehat{D}$. This  completes
the proof of the proposition.
\end{proof}

\subsection{Analysis of the Case~(G1) when $M_{\infty }$ is a catenoid.}
\label{sec4.3}

We will devote this section to prove Theorem~\ref{thm1.3}
provided that Case~(G1) occurs and that the limit surface $M_{\infty }$ of the
surfaces $\wt{E}_n$ given by (\ref{eq:tildeEn}) is a vertical catenoid.

Without loss of generality, we will assume that the waist circle of
$M_{\infty }$ is  the unit circle in the $(x_1, x_2)$-plane.
Recall the following properties demonstrated above for each $n\in \N$:
\begin{enumerate}[(J1)]
\item There exists a horizontal plane $P_n$ so that
$P_n\cap E$ contains a convex Jordan curve $\wh{\G}(n)$ and the $\l_n
(\wh{\G}(n)-p_n)$ converge as $n\to \infty $ to the waist circle $\wt{\g}$
of $M_{\infty}$.  Moreover, the heights of $P_n$ diverge increasingly to $\infty $.
\item $\wh{\G}(0)=\partial E$, $\wh{\G}(n)$ is topologically parallel to $\partial E$ in
$\ov{\D}-\{ 0\} $ (items 1 and 2 of Lemma~\ref{lemma4.2}) and $\wh{\G}(n)$
is the unique compact component of $P_n\cap E$ that is topologically parallel to $\partial E$ in
$\ov{\D}-\{ 0\} $ (claim in the first paragraph of the proof of Lemma~\ref{lemma4.2}).

\item When viewed in $\ov{\D}(*)$, $\wh{\G}(n)$ bounds a noncompact
domain $E(\wh{\G}(n))$ which is an end representative of the limit end of
$E$; we take $E(\wh{\G}(n))$ as the
closure of the component of  $E-\wh{\G}(n)$ such that $E(\wh{\G}(n))\cap
\partial E=\mbox{\rm \O }$.

\item When viewed in $\R^3$, $\wh{\G}(n)$ bounds a compact convex disk
$D_n\subset P_n$ whose interior is disjoint from $E$, and
the $D_n$ all lie in the same side of $E$ (item~(6a) of Lemma~\ref{lemma4.2}).
We will denote by $W$ the closure of the component of $\R^3-(E\cup D_0)$ that contains
$D_n$ for $n\geq 1$.

\item $F(\wh{\G}(n))=F_{E}-2\pi\beta_n e_3$ and $F(\wh{\G}(n))_H=(F_E)_H$ is not zero (items 1 and 2 of
Corollary~\ref{fluxestimates}).

\item $V_E=\infty $, $\be_{\infty}=\sum_n\be _n=-\infty $ and $\l_n\to 0$ as $n\to \infty $
(items~4 and 6 of Corollary~\ref{fluxestimates}).
In particular, the annular ends of $E$ all have strictly negative logarithmic growths
(because if one of these ends were asymptotic to a plane, then all annular ends of $E$
above this last one would be planar as well, and $F(\wh{\G}(n))_V$ would then be
independent of $n$, which contradicts that $\be_{\infty }=-\infty $ after taking vertical
components in the first formula of (J5)).
\item $\{ I_E(p'_n)\} _n$ is unbounded for every divergent
sequence $\{ p'_n\} _n\subset E$ (Lemma~\ref{lemma4.5}).
\end{enumerate}

\begin{proposition}
\label{lemma4.15}
Given $\ve >0$ small, there exist compact annular subdomains $\Delta _n=
\Delta _n(\ve )\subset E$
bounded by horizontal convex curves, such that for $n$ sufficiently large:
\begin{enumerate}
\item There exist numbers $ \l'_n>0$ converging to zero
and points $p'_n\in \R^3$ such that
the Hausdorff distance
between $\l'_n(\Delta _n-p'_n)$ and $M_{\infty }(\ve )=\{ x\in M_{\infty }\ : \
|x_3|\leq 1/\ve \} $ is less than $\ve $, and $\l'_n(\Delta _n-p'_n)$ can be written as
a normal graph over its projection to $M_{\infty }$ with $C^2$-norm less than $\ve $ and the
boundary curves of $\l'_n(\Delta _n-p'_n)$ are contained in the planes $\ds \{x_3=\pm 1/\ve\}$.

\item For all $n\in \N$, the boundary curves of $\Delta _n$ are topologically
parallel to $\partial E$ in $\ov{\D}-\{ 0\} $. Therefore, we may assume that the $\Delta _n$ are ordered
so that for each $n$, $\Delta _n$ is contained in the component of $\ov{\D }(*)-
\Delta _{n+1}$ that contains $\partial E$.

\item The closed horizontal slab in $\R^3$ that contains $\Delta _n$ is strictly below the one
that contains $\Delta _{n+k}$ for all $n,k\in \N$, $k\neq 0$.

\item Except for a finite number of components,
each component $\Omega $ of ${\displaystyle E- \bigcup_{n\in \N} \Delta_n}$
is topologically a plane with two disks removed, and $\Omega $
is the graph of a function $u$ defined over the projection of $\Omega $ to the
$(x_1, x_2)$-plane, with $|\nabla u|<1$.
\item The Gaussian curvature $K_E$ of $E$ is asymptotically zero.
\end{enumerate}
\end{proposition}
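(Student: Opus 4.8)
The idea is to describe $E$, in a neighborhood of its limit end ${\bf e}$, as a nested ``stack'' of almost-catenoid pieces $\Delta _n$ whose sizes diverge but whose Gaussian curvatures (measured in $\R^3$) tend to $0$, joined by almost-horizontal graphical pieces, and then read off the five items from this picture. Fix a small $\ve >0$ and set $M_\infty (\ve )=M_\infty \cap \{|x_3|\le 1/\ve \}$. First I would produce one such piece. Since we are in Case~(G1) with $M_\infty $ a vertical catenoid, the $\wt{E}_n$ converge smoothly, with multiplicity one, on compact subsets of $\R^3$ to $M_\infty $, and by (J1) the curves $\l _n(\wh{\G}(n)-p_n)$ converge to the waist circle of $M_\infty $. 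Hence for $n$ large the component $\Delta _n$ of $E\cap \{|x_3-(p_n)_3|\le 1/(\l _n\ve )\}$ containing $\wh{\G}(n)$ is a compact topological annulus whose rescaling $\l _n(\Delta _n-p_n)$ is a normal graph over $M_\infty (\ve )$ of $C^2$-norm $<\ve $, is $\ve $-close to $M_\infty (\ve )$ in Hausdorff distance, and has both boundary curves in $\{x_3=\pm 1/\ve \}$; in $E$ these are convex curves in two horizontal planes. With $\l '_n=\l _n$, $p'_n=p_n$ this yields item~1 for a provisional indexing, and since $\wh{\G}(n)\subset \Delta _n$ and $\Delta _n$ is an annulus, the two curves of $\partial \Delta _n$ are isotopic in $\Delta _n$ to $\wh{\G}(n)$, hence topologically parallel to $\partial E$ in $\ov{\D}-\{0\}$ by (J2).

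Next I would iterate to obtain infinitely many pairwise disjoint, vertically ordered pieces. Properties (J1)--(J7), Lemmas~\ref{lemma4.2} and~\ref{lemma4.5} and the case analysis of this section apply to every end representative of ${\bf e}$ with compact boundary, and since $\be _\infty =-\infty $ depends only on ${\bf e}$, Corollary~\ref{fluxestimates} forces Case~(G1) with a vertical catenoid limit for each of them. So, having built $\Delta _1,\dots ,\Delta _j$, I apply the previous step inside the component $E^{(j)}$ of $E-\Delta _j$ that reaches ${\bf e}$ (whose compact boundary is the upper boundary curve of $\Delta _j$), obtaining $\Delta _{j+1}$ associated to a waist curve so far out that the closed horizontal slab containing $\Delta _{j+1}$ lies strictly above the one containing $\Delta _j$ and $\Delta _{j+1}$ misses $\partial E^{(j)}$. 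That such a choice exists uses that the waist curves in $E^{(j)}$ occur at heights tending to $+\infty $ together with the fact---forced by embeddedness of $E$ and the maximum principle for $x_3$ on the finite-topology regions of $E$ bounded by consecutive waist curves, whose annular ends have strictly negative logarithmic growth by (J6)---that these heights outgrow the sizes of the associated necks. Since $\Delta _m\subset E^{(m-1)}\subset E^{(j)}$ for $m>j$, the $\Delta _n$ are pairwise disjoint, ordered in $\ov{\D}(*)$ as in item~2 and in $\R^3$ as in item~3; after relabeling, items~1--3 hold.

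Then I would analyze the complementary pieces and curvature. Choosing each $\Delta _{n+1}$ far enough out that exactly one annular end of $E$ separates it from $\Delta _n$, every component $\Omega $ of $E-\bigcup _n\Delta _n$ other than the one meeting $\partial E$ and finitely many others is a surface with two compact boundary curves (the upper curve of some $\Delta _n$, the lower curve of $\Delta _{n+1}$) and a single annular, catenoidal end; thus $\Omega $ is topologically a plane with two disks removed. Along the circles $\{x_3=\pm 1/\ve \}$ of $M_\infty (\ve )$ the unit normal is within $O(\ve )$ of the vertical, so the Gauss map of $E$ is close to the poles of $\esf ^2$ along $\partial \Omega $; rescaling $\Omega $ by $\l _n$ and passing to a subsequence, $\l _n\Omega $ converges smoothly to the corresponding catenoidal end of $M_\infty $ beyond $\{x_3=1/\ve \}$, a graph over its $(x_1,x_2)$-projection with gradient $<1$ (tending to $0$ at infinity; the negative, nearly-zero logarithmic growth of the $E$-end only helps). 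As graphicality and the bound on $|\nabla u|$ are scale invariant, $\Omega $ has the same property, giving item~4. Finally, item~5 holds because on each $\Delta _n$ one has $|K_E|\le C(\ve )\l _n^2\to 0$ (the rescaling is $C^2$-close to the fixed surface $M_\infty (\ve )$), while on each graphical $\Omega $ the interior curvature estimate for minimal graphs with $|\nabla u|<1$ gives $|K_E(x)|\le C\, d_\Omega (x,\partial \Omega )^{-2}$, which is small since near $\partial \Omega $ the intrinsic scale of $\Omega $ is comparable to $1/\l _n\to \infty $ and its catenoidal end has curvature decaying to $0$; as any neighborhood of ${\bf e}$ is exhausted by such pieces with $n\to \infty $, $K_E$ is asymptotically zero.

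The hard part will be the second step: because the necks $\Delta _n$ have size $\sim 1/\l _n\to \infty $, a priori a later, larger neck could overlap an earlier one or fail to sit in a strictly higher slab, and ruling this out---equivalently, showing that the waist heights outrun the neck sizes---is exactly where embeddedness of $E$ and the maximum principle (through the negative logarithmic growth of all annular ends) must enter; it is also what makes the ``one annular end per complementary region'' normalization used in the third step available.
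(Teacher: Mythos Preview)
Your outline for items~1--3 is essentially the paper's: the almost-catenoid pieces come from the smooth convergence $\wt E_n\to M_\infty$, the topological parallelism of $\partial\Delta_n$ to $\partial E$ is inherited from $\wh\G(n)$, and the vertical ordering follows from the maximum principle for $x_3$ on the finite-topology regions between consecutive waist curves, using that all annular ends have strictly negative logarithmic growth (J6). So far so good.

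The genuine gap is in item~4. You write: ``Choosing each $\Delta_{n+1}$ far enough out that exactly one annular end of $E$ separates it from $\Delta_n$''. This is precisely the statement that needs proof, and your iterative construction does not supply it. The blow-up argument in each subend $E^{(j)}$ produces \emph{some} almost-catenoid piece $\Delta_{j+1}$ above $\Delta_j$, but gives no control over how many annular ends of $E$ lie between them; ``far enough out'' can only increase that number. Your subsequent claim that $\l_n\Omega$ converges smoothly to the lower catenoidal end of $M_\infty$ is again exactly what must be shown, and it is not a consequence of the compact-set convergence $\wt E_n\to M_\infty$: the annular end of $\Omega$ extends to infinity, and after rescaling by $\l_n\to 0$ its logarithmic growth tends to $0$, so globally it does not track the catenoidal end of $M_\infty$.

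The paper proceeds differently. It fixes a \emph{maximal} pairwise-disjoint family $\{\Delta_n\}$ satisfying items~1--3 and then argues by contradiction: assuming infinitely many complementary regions $\Omega_n$ have at least two ends, it proves (Lemma~\ref{claim4.12}) that $f_n(\Omega_n)$ converges with multiplicity one to $M_\infty\cap\{x_3\le -1/\ve\}$, and this lemma is the bulk of the work, requiring a case analysis (your boundary curves $f_n(\a_n)$ diverging, or staying bounded with diameters shrinking or not) together with further rescalings and Claims~\ref{claim4.14} and~\ref{claim4.15bis}. Maximality is essential: any new catenoid piece found by blow-up inside $\Omega_n$ immediately contradicts the choice of family. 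Once each $\Omega_n$ (for $n$ large) is known to have a single end, the paper's Gauss--Bonnet argument shows its total curvature is small and hence it is a graph with small gradient; item~5 then follows.

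Finally, you have misidentified the hard part: it is not the vertical ordering in item~3 (which the paper handles in one paragraph via the maximum principle), but the exclusion of multiply-ended complementary regions in item~4.
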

\begin{proof}
Items 1 and 2 follow from the facts that the sequence $\{ \wt{E}_n\} _n$
defined by (\ref{eq:tildeEn}) converges smoothly on compact subsets of $\R^3$ with multiplicity~1
to the vertical catenoid $M_{\infty }$, that $\l _n\to 0$,
and that the convex horizontal curves $\wh{\G}(n)$ defined in (J1)
are topologically parallel to $\partial E$ in $\ov{\D}-\{ 0\}$.

We next prove item~3. Let $A(\wh{\G}(n),\wh{\G}(n+1))$ be the subdomain
of $E$ bounded by $\wh{\G}(n)\cup \wh{\G}(n+1)$.
Since the ends of $A(\wh{\G}(n),\wh{\G}(n+1))$ have negative logarithmic
growths and $\{ x_3(P_n)\} _n$ is increasing,
the maximum principle applied to the function $x_3|_{A(\wh{\G}(n),\wh{\G}(n+1))}$ implies
that $A(\wh{\G}(n),\wh{\G}(n+1))$ is contained in the halfspace
$\{ x_3\leq x_3(\wh{\G}(n+1))\}$. As
$A(\wh{\G}(n-1),\wh{\G}(n))$ is contained in the halfspace
$\{ x_3\leq x_3(\wh{\G}(n))\}$ and
$A(\wh{\G}(n-1),\wh{\G}(n))\cap A(\wh{\G}(n),\wh{\G}(n+1))=\wh{\G}(n)$, then we conclude
that $A(\wh{\G}(n),\wh{\G}(n+1))$ locally lies above $P_n$ along $\wh{\G}(n)$. Observe that
$A(\wh{\G}(n),\wh{\G}(n+1))$ contains the portion $\Delta _n^+$ of $\Delta _n$ that lies above $P_n$,
and it also contains the portion $\Delta _{n+1}^-$ of $\Delta _{n+1}$ that lies below $P_{n+1}$.
By the maximum principle applied to $x_3|_{A(\wh{\G}(n),\wh{\G}(n+1))-[\Delta _n^+
\cup \Delta ^-_{n+1}]}$, we deduce that the lower boundary curve of $\Delta _{n+1}$ lies strictly
above the upper boundary curve of $\Delta _n$, which implies that item~3 holds.

Next we prove item~4 of the proposition.
For $\ve>0$ small and fixed, choose a
maximal collection of pairwise disjoint domains $\{ \Delta_n \}_{n\in \N \cup \{ 0\} }\subset E$
which satisfy items~1, 2 and 3. After replacing $E$ by a subend representative of the limit end,
we may assume that $\partial E$ is thre bottom boundary component of $\Delta_0$.

We will first show that if a
component $\Omega $ of $E- \bigcup_{n\in \N \cup \{ 0\} } \Delta_n$ is topologically a plane
with two disks removed, then $\Omega $
is the graph of a function $u$ defined over the projection of $\Omega $ to the
$(x_1, x_2)$-plane, with $|\nabla u|<1$. To see this, note that
if $\ve >0$ is sufficiently small, the total geodesic curvature of $E$ along
each of the two components
of $\partial \Omega$ is arbitrarily close to $-2\pi $. As we are assuming that
$\Omega$ has exactly one end and this end has finite total curvature, then
the Gauss-Bonnet formula gives that $\Omega$ has arbitrarily small total Gaussian
curvature by taking $\ve $ sufficiently small. Therefore, the Gaussian image of $\Omega$
lies in a small neighborhood of one of the poles, say the north pole, of the unit sphere.
Hence, the projection of $\Omega$ to the $(x_1,x_2)$-plane is a proper submersion
which is injective on each of the two boundary components of $\Omega$. In this
setting, a straightforward covering space type argument implies that $\Omega$ is a graph
of a smooth function $u$ defined over the projection of $\Omega$ to the $(x_1,x_2)$-plane.
The fact that $|\nabla u|<1$ follows from the fact that the Gaussian image of $\Omega$ lies
in a small neighborhood of the north pole. {\bf Therefore, to prove item~4 we must show that
except for a finite number of components  of $E- \bigcup_{n\in \N \cup \{ 0\} } \Delta_n$,
all these components have
the topology of a plane minus two disks.} Observe that item~2 of this proposition
implies that every component of  $E- \bigcup_{n\in \N \cup \{ 0\} } \Delta_n$ is a planar domain
with two boundary components and a finite number of annular ends with negative
logarithmic growth.

Let $\{ \Omega _n\} _{n\in \N}$ be the collection of components of  $E- \bigcup_{n\in \N \cup \{ 0\} }\Delta_n$.
Enumerate these components so that $\Omega_n$
is the component of  $E- \bigcup_{n\in \N \cup \{ 0\} } \Delta_n$
with boundary components
\begin{equation}
\label{eq:alphabeta}
\a _n=\Omega_n\cap \Delta_{n-1},\quad
\be _n=\Omega _{n}\cap \Delta_n.
\end{equation}
Fix for each $n\in\N$ a dilation 
$f_n\colon \rth\rightarrow\rth$ so that the
Hausdorff distance between $f_n(\Delta_n)$ and
$M_{\infty }(\ve)$ is minimized, and so, by the
definition of $\Delta_n$, this Hausdorff distance is less than $\ve$.

\begin{lemma}
\label{claim4.12}
After passing to a subsequence,
the surfaces $ f_n(\Omega_n)$ converge with multiplicity one to the
representative $M_{\infty }\cap \{ x_3\leq -1/\ve \} $ of the
bottom end of $M_{\infty }$. In fact, the surfaces
$f_n(E)$ converge smoothly on compact subsets of $\R^3$ to $M_{\infty }$.
\end{lemma}

\begin{proof}
We first show that if the sequence of curves $\{ f_{n}(\a_{n})\} _{n}$ diverges to
infinity in $\rth$, then the lemma holds. 
Following the notation in property (J3) above, we denote by
$E(\a_n)$ the closure of the component of  $E-\a_n$ such that $E(\a_n)\cap
\partial E=\mbox{\rm \O }$.
By construction, after passing to a subsequence, we may assume that the curves
$f_n(\be_{n})$ converge in the $C^2$-norm to a convex horizontal curve
$\wh{\be}$.
Take a divergent sequence $\{ R_{n}\} _{n}$ of positive
numbers so that for each $n$, the boundary $f_{n}(\a_{n})$ of $f_{n}(E(\a_{n}))$
lies outside of the closed ball $\ov{\B}(R_{n})$ centered at the origin. Then, item~3 of
Theorem~2.2 in~\cite{mpr8} applied to the sequence of
compact minimal surfaces $\{ f_{n}(E(\a_{n}))\cap \ov{\B}(R_{n})\} _{n}$
implies that after extracting a subsequence, the $f_{n}(E(\a_{n}))\cap \ov{\B}(R_{n})$
converge smoothly on compact subsets of $\R^3$
with multiplicity one to a connected, properly embedded, nonflat minimal surface $\widehat{M}_{\infty}$
of genus zero, that is either a catenoid, a helicoid or a Riemann minimal example.
Clearly, $\widehat{M}_{\infty}$ contains the curve $\wh{\be}$. $\widehat{M}_{\infty}$ cannot be a helicoid since
$\widehat{M}_{\infty }$ has nonzero vertical flux along $\wh{\be}$;
the same argument shows that either $\widehat{M}_{\infty}$ is a vertical catenoid or a
Riemann minimal example with vertical flux.
Our earlier arguments imply that $\widehat{M}_{\infty}$ must be a vertical catenoid.
Since the limit set of the sequence $\{ f_{n}(E(\a_{n}))\cap \ov{\B}(R_{n})\} _{n}$
equals the limit set of $\{ f_{n}(E)\} _{i}$ (this follows from the properties that 
$f_n(E-E(\a_n))$ lies in the halfspace $\{ x_3\leq x_3(f_n(\a_n))\} _n$ and 
$x_3(f_n(\a_n))\to -\infty $ as $n\to \infty $), then we conclude the Lemma in the special 
case that the $f_{n}(\a_{n})$ diverge to infinity in $\rth$.

We next divide the proof into two parts: in the first one, we will prove the lemma assuming that,
after choosing a subsequence, $\Omega _n$ contains just one annular end for every $n$. 
In the second part we will suppose that, after choosing a subsequence, $\Omega _n$ contains more than 
one annular end for every $n$. 

Assume that  $\Omega _n$ contains just one annular end for every $n$. We will demonstrate that
the curves $f_n(\a_n)$ diverge to infinity in $\rth$. Arguing by contradiction, assume that after
choosing a subsequence, $f_n(\a_n)$ lies in a compact set of $\R^3$ independently of $n\in \N$. 
Recall that the logarithmic growths of the annular ends of $E$ are bounded (between
the negative logarithmic growth of the lowest end of $E$ and zero), and that
the dilation $f_n$ has homothetic factor going to zero as $n\to \infty $. Therefore,
the logarithmic growths of the unique annular end of $f_n(\Omega_n)$ is arbitrarily small in absolute
value for $n$ sufficiently large. Since the flux of $f_n(\Omega _n)$ along $f_n(\be_n)$ is converging to the
nonzero flux of $M_{\infty }$ along $\wh{\be}$ and the flux of $f_n(\Omega _n)$ along its annular end
is arbitrarily small, then the divergence theorem implies that the flux of $f_n(\Omega _n)$ along $f_n(\a_n)$ converges
to the negative of the flux of $M_{\infty }$ along $\wh{\be}$, that is nonzero. This property
and the fact that the convex planar curve $f_n(\a_n)$ lies in a compact set independent of $n$, imply that
the $f_n(\a_n)$ converge (after passing to a subsequence) to a convex, horizontal planar curve $\wh{\a}$
as $n\to \infty$. Also recall that $f_n(\Omega _n)$ is a graph over its projection to the $(x_1,x_2)$-plane,
by the fourth paragraph in the proof of Proposition~\ref{lemma4.15}.
Therefore, curvature estimates for stable minimal surfaces imply that the $f_n(\Omega _n)$ converge to
a minimal graph over the complement in the plane $\{ z=0\}$ of the two disks bounded by $\Pi (\wh{\a}),\Pi(\wh{\be})$, 
where  $\Pi (x,y,z)=(x,y,0)$. As this minimal graph has vertical flux and two convex boundary curves, a standard application
of the L\'opez-Ros deformation argument leads to contradiction. This contradiction shows that the 
 $f_n(\a_n)$ diverge to infinity in $\rth$. By the discussion in the first paragraph of this proof, we now
 conclude that Lemma~\ref{claim4.12} holds if $\Omega _n$ has one end for every $n$ (after choosing a subsequence).

Next assume that $\Omega _n$ has always at least two ends. Again by the discussion in the first paragraph 
of the proof of Lemma~\ref{claim4.12}, it remains to show that the curves $f_n(\a_n)$ diverge to
infinity in $\rth$. Assume this last property fails to hold. 
Since the diameter of the sets $f_n(\a_n)$ are uniformly bounded (because the diameter of 
$f_n(\Delta_{n-1})$ is bounded as the homothetic factor of the dilation $f_n$ is going to zero and
the diameter of $f_{n-1}(\Delta_{n-1})$ is comparable to the one of $M_{\infty }(\ve)$), 
we may assume from this point on that the
curves $f_n(\a_n)$ all lie in a fixed bounded subset of $\rth$.
This bounded set must lie below the plane $\{x_3=-1\} $ if $\ve$ is
chosen sufficient small (by the already proven item~3 of Proposition~\ref{lemma4.15}).
We will find the desired contradiction by analyzing each of the following two 
mutually exclusive situations (after passing to a subsequence):
\begin{enumerate}[(K1)]
\item The diameters of the curves $f_n(\a_{n})$ are not bounded away from zero.
\item The diameters of the curves $f_n(\a_{n})$ are bounded away from zero.
\end{enumerate}

Suppose that Case~(K1) holds. Then, after choosing a subsequence, we may assume that
the curves $f_n(\a_n)$ converge to a point $p\in\rth$ that lies below the
plane $\{ x_3=-1\} $.  Consider the sequence of compact, embedded, minimal planar domains
$\{ f_n(E(\a_n))\cap \ov{\B}(n)\} _n$. We claim that the $[f_n(E(\a_n))\cap \ov{\B}(n)]-\{ p\}$
form a locally simply connected sequence of minimal planar domains in
$\rth - \{ p\}$. Otherwise, our previous arguments show that we
can produce, after blowing-up by topology, a new limit of dilations of the
$[f_n(E(\a_n))\cap \ov{\B}(n)]-\{ p\}$ which is
a vertical catenoid. This means that after extracting a subsequence and for $n$ sufficiently large,
$[f_n(E(\a_n))\cap \ov{\B}(n)]-\{ p\}$ contains a compact subdomain
$C_n$ which is arbitrarily close to a homothetically shrunk copy of a large
compact region of a vertical catenoid, where the homothetic factor can be taken arbitrarily small.
Note that $C_n$ cannot lie in $f_n(\Omega _n)$ because this contradicts
the maximality of the family $\{ \Delta _m\} _m$.
Since $f_n(\Delta _n)$ is $\ve $-close to $M_{\infty }(\ve )$,
we deduce that $C_n$ must lie in $f_n(E(\a_{n+1}))$.
To see that this is impossible, first observe that
for $n$ sufficiently large, the generator of the homology group $H_1(C_n)$ of $C_n$
is topologically parallel to $f_n(\be_n)$ modulo annular ends of $f_n(E)$ (adapt the
arguments as in the proof of Claim~\ref{claim4.3}).
As the vertical component of the flux vector of $f_n(\Delta _n)$ along $f_n(\be_n)$
is larger than some positive number in absolute value (namely, one half of the vertical flux of
$M_{\infty}$) and the annular ends of $f_n(E)$ all have negative logarithmic growths,
we deduce from the divergence theorem that the vertical component of the flux vector of $f_n(C_{n})$ is positive and
bounded away from zero (see the last paragraph of the proof of Corollary~\ref{fluxestimates}),
which contradicts that the length of a generator of $H_1(C_{n})$ tends to zero as $n\to \infty$.
Therefore, the sequence $[f_n(E(\a_n))\cap \ov{\B}(n)]-\{ p\}$
is locally simply connected sequence in $\rth - \{ p\}$. In fact, this argument shows that for all
$n\in \N$ and given a  regular neighborhood $U_n(\de )$ of the boundary of $f_n(E(\a_n))\cap \ov{\B}(n)$ in
$f_n(E(\a_n))\cap \ov{\B}(n)$ with radius $\de >0$, the restriction of the injectivity radius function of
$f_n(E)$ to $[f_n(E(\a_n))\cap \ov{\B}(n)]-U_n(\de)$ is uniformly bounded away from zero
(independently of $n$).

In this setting, item~3 of Theorem~2.2 in~\cite{mpr8} ensures that after passing to a subsequence,
the surfaces $[f_n(E(\a_n))\cap \ov{\B}(n)]-\{ p\}$ converge to
a minimal lamination $\mathcal{L}$ of $\R^3-\{ p\} $ whose closure $\ov{\mathcal{L}}$
in $\R^3$ consists of a single leaf which is a properly embedded minimal surface $L_1$ of genus zero
that is either a helicoid, a catenoid or a Riemann minimal example.
Furthermore, the convergence of the $[f_n(E(\a_n))\cap \ov{\B}(n)]-\{ p\}$ to $L_1$
is smooth on compact sets of $\R^3-\{ p\} $.
Our previous arguments show that $L_1$ is the vertical catenoid $M_{\infty }$.
As $p$ is a point in $L_1$, then $M_{\infty }$
must pass through $p$. We next analyze the intersection of $f_n(E(\a_n))$ with a ball $\B (p,\de )$
of small radius $\de >0$ so that $\B (p,\de)\cap M_{\infty }$ is a graphical disk with boundary
$\G_{\infty }$. For $n$ large,
we can assume that $f_n(\a_n)\subset \B (p,\de)$. Since $f_n(E(\a_n))$ is a properly embedded surface
of genus zero and $f_n(E(\a _n))\cap \partial \B (p,\de )$ consists of a single curve $\G_n$ such that
$\{ \G_n\} _n\to \G_{\infty }$, then we conclude that $f_n(\a _n)\cup \G_n$ bounds a compact annulus
in $f_n(E(\a_n))$; in fact, $f_n(E(\a_n))\cap \ov{\B}(p,\de )$ is this annulus.

We now arrive at the desired contradiction as follows.
Consider a horizontal plane $\Pi$ strictly below the height of $p$.
Since $M_{\infty }$ is the smooth limit of the $[f_n(E(\a_n))\cap \ov{\B}(n)]-\{ p\}$ away from $p$,
we conclude that the horizontal circle $M_{\infty }\cap  \Pi$ is arbitrarily close
to a simple closed, planar convex curve $c_n \subset f_n(E(\a_n))$.
Observe that $c_n$ can be joined to both $f_n(\a_n)$ and $f_n(\be_n)$
by arcs that do not intersect $f_n(\a_n)\cup f_n(\be_n)\cup c_n$ except at their
extrema. Therefore,
$f_n(\a_n)\cup f_n(\be_n)\cup c_n$ is the boundary of a compact planar
domain in $f_n(E(\a_n))$. Since $\a_n$ and $\be_n$ are both nontrivial in
$\ov{\D}-\{ 0\} $, then $c_n$ must bound a disk in $\ov{\D}-\{ 0\} $
and therefore, $c_n$ bounds in $E$ a punctured disk $T_n$ with the number of punctures being
positive and finite (depending on $n$) by the convex hull property.
Recall that the logarithmic growths of the annular ends of $f_n(E_n)$ are arbitrarily small in absolute
value for $n$ sufficiently large. As the flux of $f_n(E(\a_n))$ along $c_n$ is bounded
away from zero, we conclude that the number of punctures in $T_n$ is unbounded as
$n\to \infty $. Since $f_n(T_n)$ lies below the plane $\Pi $, $c_n=\partial [f_n(T_n)]$
is a convex horizontal curve inside $\Pi $ and the flux of any closed curve in $f_n(T_n)$
is vertical, then the L\'opez-Ros deformation argument implies that $T_n$ contains
just one puncture, which is a contradiction for $n$ large. This contradiction proves that
Case~(K1) does not occur.

Finally suppose that Case~(K2) occurs. This assumption implies that after passing to a subsequence,
the following properties hold.
\begin{enumerate}[(L1)]
\item
The surfaces $f_n(\Delta_{n-1})$ converge to a compact minimal annulus $\Delta_{\infty }$
bounded by two horizontal simple closed convex curves, and $\Delta_{\infty }$
 is close in the Hausdorff distance to a compact piece of a vertical catenoid. Similarly,
 $f_n(\Delta_{n})$ converge to a compact minimal annulus $\Delta^{\infty }$
bounded by two horizontal simple closed convex curves, and $\Delta^{\infty }$
 is $\ve$-close in the Hausdorff distance to $M_{\infty }(\ve)$.

 \item The curves $f_n(\a_{n})$ converge to the top boundary
component $\wh{\a}$ of $\Delta_{\infty}$.
\item For all $n\in \N$, the restriction of the injectivity radius function of
$f_n(E)$ to $f_n(E(\a_n))\cap \ov{\B}(n)$ is uniformly bounded away from zero
(independently of $n$; this is a consequence of the arguments in the first paragraph of the proof of Case~(K1)).
\end{enumerate}

We now divide the argument of why Case~(K2) leads to contradiction into two subcases, depending on
whether or not the sequence $\{ f_n(\Omega_n)\} _n$ has locally
bounded second fundamental form in $\R^3$.

First suppose that $\{  f_n(\Omega_n)\} _n$ has locally
bounded second fundamental form in $\R^3$.
By the arguments in the proof of Lemma~1.1 in~\cite{mr8},
after passing to a subsequence, the surfaces $f_n(\Omega_n)$ converge to a
minimal lamination $\mathcal{L}_1$ of $\R^3-(\wh{\a}\cup \wh{\be})$.
In fact, property~(L1) above together with the
definition of $f_n$ imply that
$ \{ f_n(\Int (\Omega_n\cup \Delta _{n-1}\cup \Delta_n)) \}_n $ converges to a minimal lamination $\mathcal{L}_2$
of $\R^3-(\wh{\a}_1\cup \wh{\be}_1)$, where
\[
\wh{\a}_1=\partial \Delta _{\infty }-\wh{\a},\quad \wh{\be}_1=\partial \Delta ^{\infty }-\wh{\be},
\]
and $\mathcal{L}_2$ contains the interior of both $\Delta_{\infty },\Delta^{\infty }$
as portions of its leaves. We will call $L(\Delta_{\infty })$ (resp. $L(\Delta^{\infty })$) the leaf of $\mathcal{L}_2$
that contains the interior of $\Delta_{\infty }$ (resp. of $\Delta^{\infty }$). Note that $L(\Delta _{\infty })$ might
coincide with $L(\Delta^{\infty})$. Also observe that neither $L(\Delta_{\infty })$ nor $L(\Delta^{\infty })$ are
stable, as both $\Delta_{\infty }$, $\Delta^{\infty }$ can be assumed to be unstable by choosing $\ve $
in the statement of Proposition~\ref{lemma4.15} sufficiently small. As $L(\Delta_{\infty }), L(\Delta^{\infty })$
are not stable, Theorem~1 in~\cite{mpr18} implies that $L(\Delta_{\infty }), L(\Delta^{\infty })$ are not limit leaves
of $\mathcal{L}_2$.
Also note that every limit leaf of $\mathcal{L}_1$ is contained in a limit leaf of $\mathcal{L}_2$,
and so, limit leaves of $\mathcal{L}_1$ are complete stable minimal surfaces, which are planes.
In particular, limit leaves of $\mathcal{L}_1$  and of $\mathcal{L}_2$ are the same.
This implies that the closure in $\R^3$ of each nonflat leaf of $\mathcal{L}_2$ is proper in $\R^3$,
in a halfspace or in a slab with boundary being limit leaves of $\mathcal{L}_1$. In particular,
the following surfaces with compact boundary are proper in $\R^3$, proper in an open halfspace or proper in an open slab:
\[
\left\{
\begin{array}{ll}
L(\Delta_{\infty })\cup \wh{\a}_1,\quad L(\Delta^{\infty })\cup \wh{\be }_1
& \mbox{if }L(\Delta_{\infty })\neq L(\Delta^{\infty }),\\
L(\Delta_{\infty })\cup \wh{\a}_1\cup \wh{\be}_1&
 \mbox{if }L(\Delta_{\infty })= L(\Delta^{\infty }).
\end{array}\right.
\]
Suppose that $L(\Delta_{\infty })\neq L(\Delta^{\infty })$. 
 In this setting, property (L3) above
 and the intrinsic version of the one-sided curvature estimates by Colding and Minicozzi
 (Corollary~0.8 in~\cite{cm35})
 imply that $L(\Delta_{\infty })\cup \wh{\a}_1$ has bounded Gaussian curvature in
 any small regular neighborhood of the limit set of $L(\Delta_{\infty })\cup \wh{\a}_1$
(see Lemma~1.2 in~\cite{mr8} for a similar argument using the extrinsic version of the
one-sided curvature estimates by Colding and Minicozzi). Therefore,
$L(\Delta_{\infty })\cup \wh{\a}_1$ is proper in $\R^3$, and the same holds for $L(\Delta^{\infty })\cup \wh{\be}_1$ by similar
arguments. In the case that $L(\Delta_{\infty })= L(\Delta^{\infty })$, the same reasoning gives that
$L(\Delta_{\infty })\cup \wh{\a}_1\cup \wh{\be}_1$ is proper in $\R^3$.

Observe that $L(\Delta_{\infty })$ has genus zero and one or two boundary curves,
each of which bounds an open convex horizontal disk
disjoint from $L(\Delta_{\infty })$ (this follows from the arguments in the proof of Assertion~\ref{asser3.11}).
If $L(\Delta _{\infty })$ has one boundary curve, then we contradict the Halfspace Theorem, as $L(\Delta_{\infty})$
lies in the halfspace $\{ x_3\leq 0\} $ (because $f_n(\Omega _n)$ has the same property for all $n$)
and $L(\Delta_{\infty })$ contains interior points with heights strictly greater than its boundary curve. Therefore,
$L(\Delta _{\infty })$ has two boundary curves (equivalently, $L(\Delta _{\infty })=L(\Delta^{\infty })$). By the
convex hull property, $L(\Delta _{\infty })$ is noncompact. As $L(\Delta _{\infty })-\Delta^{\infty}$ is contained
in $\{ x_3\leq 0\}$, then $L(\Delta _{\infty })$ has horizontal limit tangent plane at infinity.
Since both $\a_n,\be_n$ have the same horizontal component of their fluxes, and
the homothetic factors $\l_n$ in (\ref{eq:tildeEn}) tend to zero,
then we deduce that the fluxes of $L(\Delta _{\infty })$ along its
boundary curves are vertical. This implies that $L(\Delta_{\infty })$ has vertical flux,
since it has genus zero.
Therefore, $L(\Delta_{\infty })$ cannot have a finite positive number of  ends by the L\'opez-Ros deformation argument.

The previous paragraph implies that $L(\Delta_{\infty })$ has infinitely many ends. Let $D$ be a positive number such
that the boundary of $L(\Delta_{\infty })$ is contained in the ball $\B (D)$ of radius $D$ centered at the origin.
We claim that for every sequence $\{ \mu _n\} _n$ of positive numbers going to zero, the restriction
to $\mu _n[L(\Delta _{\infty })-\B(2D)]$ of the injectivity radius function of $\mu _nL(\Delta _{\infty })$
is greater that some positive constant (independent of $n$) times the distance to the origin. Otherwise,
there exists a sequence of points $x_n\in \mu _n[L(\Delta _{\infty })-\B(2D)]$ such that
\[
\frac{I_{\mu _nL(\Delta_{\infty })}(x_n)}{|x_n|}\to 0 \quad \mbox{as $n\to \infty$,}
\]
where $I_{\mu _nL(\Delta_{\infty })}$ stands for the injectivity radius function of $\mu _nL(\Delta_{\infty })$.
Since the last quotient is invariant under rescaling, the sequence $x_n/|x_n|$ lies in the unit sphere and the boundary
of $\mu _nL(\Delta_{\infty })$ shrinks to the origin as $n\to \infty $, we produce a sequence of blow-up points
on the scale of topology on $\Omega _n$, which is impossible by previous arguments (maximality of $\{ \Delta_m\} _m$). 
This proves our claim.

Since $L(\Delta_{\infty })$ has infinitely many ends, then $L(\Delta_{\infty })$ has infinite total curvature.
As $L(\Delta_{\infty })$ has compact boundary, then Theorem~1.2 in~\cite{mpr10} ensures that
$L(\Delta_{\infty })$ does not have quadratic decay of curvature, i.e., there exists a divergent sequence
$y_n\in L(\Delta_{\infty })$ such that
\[
|K_{L(\Delta_{\infty })}|(y_n)\cdot |y_n|^2\to \infty \quad \mbox{as $n\to \infty $,}
\]
where $K_{L(\Delta_{\infty })}$ denotes the Gaussian curvature of $L(\Delta_{\infty })$. Taking
$\mu _n=1/|y_n|$ and using the claim in the last paragraph, we conclude by Theorem~2.2 in~\cite{mpr8}
that after passing to a subsequence, the $\mu _nL(\Delta_{\infty })$ converge to a minimal parking garage
structure of $\R^3$. This is impossible, since the limit set of the $\mu _nL(\Delta_{\infty })$ lies in
$\{ x_3\leq 0\} $. This contradiction implies that $L(\Delta_{\infty })$ cannot have infinitely many ends, and thus,
case (K2) does not occur in the special case that $\{  f_n(\Omega_n)\} _n$ has locally
bounded second fundamental form in $\R^3$.

By the last sentence, it remains to prove that
the sequence $\{ f_n(\Omega _n)\} _n$ has locally
bounded second fundamental form in $\R^3$ provided that case (K2) happens. 
By property (L3) above and the 1-sided curvature estimates by
Colding-Minicozzi, we conclude that the norms of the second
fundamental forms of the $ f_n(\Omega _n\cup \Delta _{n-1}\cup \Delta _n)$ are bounded on some small
fixed compact regular neighborhood $W$ of $\Delta _{\infty }\cup \Delta ^{\infty}$.
Arguing by contradiction,
suppose, after extracting a subsequence, that there is a sequence of
points $q_n\in f_n(\Omega_n)-W$ that converges to
a point $q\in \rth -W$ where the norms of the second fundamental forms of the $f_n(\Omega_n)$
are greater than $n$.  By property (L3), Colding-Minicozzi theory in~\cite{cm24} (see also Figure~2 in \cite{mpr11})
implies that after extracting a subsequence, the following properties hold:
\begin{enumerate}[(M1)]
\item There is a positive number $\de$ less than one half of the distance in $\R^3$ from $q$ to $W$,
a relatively closed subset $\cS_{q,\de}$ of $\B(q,\de)$ and a minimal lamination $\cL_{q,\de}$ of $\B(q,\de)-\cS_{q,\de}$
such that $\B(q,\de)\cap f_n(\Omega_n)$ consists of disks with their boundary curves in
$\partial \B(q,\de)$ and a subsequence of these disks converges $C^{\alpha }$, $\a \in (0,1)$, to
$\cL_{q,\de}$ in $\B(q,\de)-\cS_{q,\de}$.
\item For each point $s\in \cS_{q,\de}$, there is a limit leaf of $\cL_{q,\de}$ that is a minimal disk punctured at $s$,
and the closure in $\B(q,\de)$ of the collection of all the limit leaves of $\cL_{q,\de}$ forms a minimal lamination
${\cal F}_{q,\de}$ of $\B(q,\de)$.
\end{enumerate}

We refer the reader to description (D) in Section 3 of~\cite{mpr11} for details on observations (M1), (M2).
Furthermore, straightforward diagonal arguments using
this just described local structure of the limit set of the $f_n(\Omega _n)$ near points in $\rth$ where the
norms of their second fundamental forms are becoming unbounded,
demonstrate that there exists a possibly singular minimal lamination $\cL'$ of $\rth -W$
and a relatively closed set $\cS\subset \cL'$ in $\rth-W$ (the set of points where $\cL'$ fails to
admit a local lamination structure) such that
after extracting a subsequence, the $f_n(\Omega _n)$ converge $C^\a$ ($0<\a<1$) on compact subsets of $\rth -[W\cup
\cS \cup S(\cL')]$ to $\cL'$, where $S({\cal L}')\subset {\cal L}'-\cS$ is the set of points where $\cL'$
admits a local lamination structure but the second fundamental forms of the surfaces $f_n(\Omega_n)$
blow up as $n\to \infty $ ($S({\cal L}')$ is called the singular set of convergence of the sequence).
In fact, since the second fundamental forms of the
$f_n(\Omega _n\cup \Delta_n\cup \Delta _{n-1})$
are uniformly bounded in $W$, we conclude that $\cL'$ can be extended to a possibly singular minimal lamination
$\cL'_1$ of $\R^3-(\wh{\a}_1\cup \wh{\be }_1)$ and the surfaces  $f_n(\Omega _n\cup \Delta_n\cup \Delta _{n-1})$ converge $C^{\a }$ to
$\cL'_1$ in $\R^3-[\wh{\a}_1\cup \wh{\be }_1\cup \cS\cup S({\cal L})]$. Furthermore, the singular set (resp. the singular set of
convergence) of $\cL'_1$ equals the singular set $\cS$ (resp. the singular set of convergence $S({\cal L}')$) of $\cL'$.
Additionally, the closure in $\rth -[\wh{\a}_1\cup \wh{\be }_1]$
of the sublamination of limit leaves of $\cL'_1$ is a (regular) minimal lamination ${\cal F}$ of $\rth-[\wh{\a}_1\cup \wh{\be }_1]$
with $\cS \cup S(\cL')\subset {\cal F}$. Observe that the leaves $L(\Delta _{\infty }),L(\Delta ^{\infty })$ of $\cL'_1$ that
contain respectively $\Int(\Delta _{\infty })$, $\Int(\Delta ^{\infty })$, are unstable and thus, they are not leaves of ${\cal F}$.
In fact, $\Delta _{\infty }\cup \Delta ^{\infty }$ can be assumed to lie at a positive distance from ${\cal F}$ after slightly changing the
compact domains $\Delta _{\infty },\Delta ^{\infty }$. This implies that the leaves of ${\cal F}$ are complete in $\R^3$, and since
they are stable, then these leaves are planes.

Since ${\cal F}$ contains $\cS\cup S({\cal L}')$, then the norms of the second fundamental forms
of the surfaces $f_n(\Omega_n)$ are locally bounded in the open set $\R^3-{\cal F}$, which is a countable
union of open slabs and open halfspaces.
As the top boundary component of $L(\Delta ^{\infty })$ is $\wh{\be }_1$,
it follows that $L(\Delta ^{\infty })$
is contained in the halfspace $\{x_3\leq x_3(\wh{\be }_1)\}$ and $L(\Delta ^{\infty })$ is proper in the open slab
$A$ of $\rth$ with boundary $\{x_3= x_3(\wh{\be }_1)\}\cup P$, where $P$ is the plane in $\cF$
with largest $x_3$-coordinate (which exists since $\cS\cup S({\cal L}')\neq \mbox{\O}$).
Observe that $L(\Delta^{\infty })$ is proper in $A$ (otherwise there exists a plane in $\mathcal{F}\cap A$).

Next we will show that $L(\Delta^{\infty })\cup \partial L(\Delta^{\infty })$ is incomplete.
Arguing by contradiction, suppose that $L(\Delta^{\infty })\cup \partial L(\Delta^{\infty })$ is complete.
As
the injectivity radius function of $f_n(\Omega_n)\cup \Delta _n\cup \Delta _{n-1}$ restricted to $f_n(\Omega_n)$
is uniformly bounded away from zero (otherwise we could find a sequence of blow-up points in $\Omega _n$, which is
impossible) and $L(\Delta^{\infty })\cup \partial L(\Delta^{\infty })$ is assumed to be complete,
then the injectivity radius function of $L(\Delta^{\infty })$ is bounded away from zero outside any neighborhood
of its boundary. In this setting, Theorem~\ref{proper1end} implies that $L(\Delta^{\infty })$ is proper in $\R^3$.
To find the desired contradiction, we distinguish two cases; first suppose that $\partial L(\Delta^{\infty })=\wh{\be}_1$.
In this case, $L(\Delta^{\infty })\cup \wh{\be}_1$ has full harmonic measure by Lemma~2.2 in~\cite{ckmr1}. But
the third coordinate function of $L(\Delta^{\infty })\cup \wh{\be}_1$ is a bounded harmonic function with
constant boundary values $x_3(\wh{\be}_1)$ and values at interior points strictly below $x_3(\wh{\be}_1)$,
which is a contradiction. Second, suppose that
$\partial L(\Delta^{\infty })=\wh{\a}_1\cup \wh{\be}_1$; in this case, $L(\Delta^{\infty })$ has finitely many
ends by the same Lemma~2.2 in~\cite{ckmr1}, and thus, these ends are asymptotic to horizontal planes. Now
the L\'opez-Ros deformation argument applied to $L(\Delta^{\infty })\cup \partial L(\Delta^{\infty })$ 
leads to contradiction as this noncompact
embedded minimal surface has vertical flux and two convex horizontal boundary components. Therefore,
$L(\Delta^{\infty })\cup \partial L(\Delta^{\infty })$ must be incomplete.

Since $L(\Delta^{\infty })\cup \partial L(\Delta^{\infty })$ is incomplete and $L(\Delta^{\infty })$ is
proper in $A$, then
$L(\Delta^{\infty })$ contains a proper arc $\tau\colon [0,1)\to L(\Delta^{\infty })$ of finite length with 
its limiting endpoint $q\in \cS\cap P$;  previous arguments also imply that $L(\Delta ^{\infty })$
has vertical flux. If there exists another point $q'\in (\cS\cap P)-\{ q\} $ where $L(\Delta ^{\infty })$ fails to be complete,
one can construct a sequence of
connection loops $\sigma_k\subset L(\Delta ^{\infty })$, $k\in \N$, that converge as $k\to \infty $ with multiplicity 2
away from $\{q,q'\}$ to an compact embedded arc $\sigma$ in $P-(\mathcal S-\{ q,q'\} )$ that joins
$q$ to $q'$, and such that the fluxes of these connection loops on $L(\Delta ^{\infty })$ converge to a nonzero
horizontal vector, which contradicts that $L(\Delta ^{\infty })$ has vertical flux.
Therefore, $L(\Delta ^{\infty })$ only has $q\in \cS\cap P$ as a point of incompleteness.
By the extrinsic 1-sided curvature estimates of Colding-Minicozzi,
there is an $\ve'>0$ small such that the intersection of $L(\Delta ^{\infty })$ with the
$\ve'$-neighborhood $P(\ve')$ of $P$ is a disk that contains a pair of disjoint $\infty$-valued graphs $\Sigma_1,\Sigma_2$
(with respect to polar coordinates in $P$ centered at $q$, each $\Sigma_i$ is an $\infty $-valued graph over
an annulus in $P$ centered at $q$ with inner radius $1$ and arbitrarily large radius, $i=1,2$) and both $\Sigma_1$,
$\Sigma _2$ spiral together into $P$ by above. Furthermore, $\Sigma_1$ and $\Sigma _2$ can be joined by
curves in $L(\Delta ^{\infty })\cap P(\ve')$ with uniformly bounded length. In this setting,
Corollary~1.2 in \cite{cm26} (see especially the paragraph just after this corollary)
leads to a contradiction. This contradiction completes the proof that Case~(K2) cannot occur.

Since we have discarded Cases~(K1) and (K2) above, then 
the curves $f_n(\a_n)$ diverge to infinity in $\rth$. Thus, the first paragraph in
the proof of Lemma~\ref{claim4.12} ensures that the conclusions of Lemma~\ref{claim4.12} hold.
\end{proof}

Recall that we had called $\Omega _n$, $n\in \N$, to the components of $E- \bigcup_{m\in \N\cup \{ 0\} } \Delta_m$,
where the index $n$ is chosen so that (\ref{eq:alphabeta}) holds, and that in order to prove item~4 of 
Proposition~\ref{lemma4.15}, it suffices to find a contradiction with the following assumption:
\begin{enumerate}[$(\clubsuit)$]
\item The number of components $\Omega _n$ with has at least two annular ends is infinite.
\end{enumerate}
Suppose that $(\clubsuit)$ holds, and let $\Omega _{n(i)}$, $i\in \N$, denote the subsequence of the $\Omega _n$ 
with at least two annular ends each.
Since any path in $\Omega _{n(i)}$ joining two consecutive annular ends
intersects the inverse image by the Gauss map of $E$ of the horizontal equator in the sphere,
we conclude that there exists some point $x_{i} \in
\Omega _{n(i)}$ where the tangent plane to $\Omega _{n(i)}$ is vertical. 
We can assume that $x_{i}$ is chosen so
that it is an extrinsically closest such point to the upper boundary
component $\be _{n(i)}$ of $\Omega _{n(i)}$. Let
 $d_{\ov{\partial }}(i)>0$ be
the extrinsic distance from $f_{n(i)}(x_i)$ to $f_{n(i)}(\be_{n(i)})$,
 where $f_n$ is the dilation defined just before  Lemma~\ref{claim4.12}.
Since $f_{n(i)}(\Omega _{n(i)})$ converges as $i\to \infty $ to
$M_{\infty }\cap \{ x_3\leq \frac{-1}{\ve }\} $ by  Lemma~\ref{claim4.12}
(recall that $M_{\infty}$ is the vertical catenoid whose waist circle is the unit circle in the $(x_1,x_2)$-plane)
and the tangent plane to $f_{n(i)}(\Omega _{n(i)})$ at $f_{n(i)}(x_i)$ is vertical, 
it follows that $d_{\ov{\partial }}(i)\to \infty $ as $i\to \infty $.

Now we apply a homothety to obtain the surface
\begin{equation}
\label{eq:Thetatilde}
\ov{\Theta}_{n(i)} =\frac{1}{d_{\ov{\partial }}(i)}
[f_{n(i)}(\Omega _{n(i)})-f_{n(i)}(b_i)],
\end{equation}
where $b_i$ is a closest point to $x_i$ in $\be_{n(i)}$. The surface $\ov{\Theta}_{n(i)}$
is an embedded, minimal planar domain passing through the origin,
with two horizontal, almost circular boundary components
and a positive number of ends (at least two), all with negative logarithmic growth;
after extracting a subsequence, let ${\bf x}\in \R^3$ be the limit of the points 
$\frac{1}{d_{\ov{\partial }}(i)}[f_{n(i)}(x_i)-f_{n(i)}(b_i)]$
and note that ${\bf x}$ is at a distance 1 from the origin.
Let
\[
\ov{\a }_{n(i)}=\frac{1}{d_{\ov{\partial }}(i)}[f_{n(i)}(\a_{n(i)})-f_{n(i)}(b_i)], 
\quad \ov{\be }_{n(i)}=\frac{1}{d_{\ov{\partial }}(i)}[f_{n(i)}(\be_{n(i)})-f_{n(i)}(b_i)]
\]
be the respective lower and upper boundary components of $\ov{\Theta}_{n(i)}$.

Since the lengths of $ \ov{\a}_{n(i)},\ov{\be}_{n(i)}, $ are shrinking to zero, 
then after extracting a subsequence, the
$\ov{\be}_{n(i)}$ converge to $\vec{0}$ and the
$\ov{\a}_{n(i)}$ either converge to a point $q(\a)\in \R^3$ or they diverge in $\rth$.  It follows that
$\{ \ov{\Theta }_{n(i)}\cap \ov{\B}(i)\} _i$ is a sequence of compact genus zero minimal surfaces
which is locally simply connected in $\R^3-W$, where $W=\{\vec{0},q(\a)\}$  in the case that $q(\a)$ exists
and $W=\{\vec{0}\}$ otherwise.
By Theorem~2.2 in~\cite{mpr8}, after choosing a subsequence, the surfaces $\ov{\Theta }_n\cap \ov{\B}(n)$
converge to a minimal lamination $\cL$ of $\rth-W$
and $\cL$ extends to a minimal lamination $\ov{\cL}$ of $\rth$. Notice that $\ov{\cL}$ contains a complete leaf
$L_{\bf x}$ passing through ${\bf x}$.
Since $\ov{\Theta}_{n(i)}$ is contained in the halfspace $\{ x_3\leq 1\} $ for $i$ large
(since $\frac{1}{d_{\ov{\partial }}(i)}f_{n(i)}(\Delta _{n(i)})$ shrinks to $\vec{0}$),
then Theorem~2.2 in~\cite{mpr8} implies that all of the leaves in $\ov{\cL}$ are horizontal planes and that the 
sequence of norms of the second fundamental forms of the surfaces $\ov{\Theta }_{n(i)}\cap \ov{\B}(i)$ is locally bounded 
in $\rth-W$. In particular, $L_{\bf x}$ is a horizontal plane. Since the tangent plane of $\ov{\Theta }_{n(i)}\cap \ov{\B}(i)$ 
is vertical at $x_i$ for each~$i$, the sequence $\{ \ov{\Theta }_{n(i)}\cap \ov{\B}(i)\} _i$ cannot have uniformly bounded curvature
in any fixed sized neighborhood of ${\bf x}$, which implies that ${\bf x}=q(\a)$ (in particular, $q(\a )$ exists).

We next explain how to refine the arguments in the last paragraph to conclude the following property.
\begin{claim}
\label{claim4.14}
Once we restrict to the subsequence $\{ \ov{\Theta }_{n(i)}\cap \ov{\B}(i)\} _i$ that limits to $\cL$,
for any sequence of points $y_i\in \Omega_{n(i)}$ with vertical tangent plane 
(not necessarily the closest such points in $\Omega_{n(i)}$ to the upper boundary component $\be_{n(i)}$), the points
$\frac{1}{d_{\ov{\partial }}(i)}f_{n(i)}(y_i)$ converge to~${\bf x}$.
\end{claim}
\begin{proof}
Let $d_{\ov{\partial }}(y_i,i)$ be the extrinsic distance from $f_{n(i)}(y_i)$ to $f_{n(i)}(\be _{n(i)})$, which is
attained at some point $f_{n(i)}(b'_i)$ with $b'_i\in \be_{n(i)}$. Observe that
$d_{\ov{\partial }}(y_i,i)\geq d_{\ov{\partial }}(i)$ and the arguments before Claim~\ref{claim4.14}
prove that after choosing a subsequence, the curves 
\[
\frac{1}{d_{\ov{\partial }}(y_i,i)}[f_{n(i)}(\a_{n(i)})-f_{n(i)}(b'_i)]
\]
converge to the same (subsequential) limit ${\bf y}$ of the points 
\[
\frac{1}{d_{\ov{\partial }}(y_i,i)}[f_{n(i)}(y_i)-f_{n(i)}(b'_i)],
\]
which in turn is a 
point in the unit sphere. Since the curves $\frac{1}{d_{\ov{\partial }}(y_i,i)}[f_{n(i)}(\a_{n(i)})-f_{n(i)}(b'_i)]$ converge to the same 
limit point as the $\ov{\a}_{n(i)}$ (this last limit was called $q(\a)$ in the preceeding
paragraph), we conclude that ${\bf y}=q(\a)={\bf x}$ and that $\frac{d_{\ov{\partial }}(y_i,i)}{d_{\ov{\partial }}(i)}$ tends
to $1$ as $n\to \infty $, from where we obtain that the 
$\frac{1}{d_{\ov{\partial }}(i)}[f_{n(i)}(y_i)-f_{n(i)}(b'_i)]$ converge to ${\bf x}$.
The fact that the whole original sequence $\{ \frac{1}{d_{\ov{\partial }}(i)}[f_{n(i)}(y_i)-f_{n(i)}(b'_i)]\} _i$ converges to ${\bf x}$
(i.e., we do not need to pass to a subsequence of the $y_i$ once we restrict to
the subsequence that produces the convergent sequence $\{ \ov{\Theta }_{n(i)}\cap \ov{\B}(i)\} _i$ to $\cL$)
follows from arguing by contradiction and passing to further subsequences. 
\end{proof}

Now return to the definition of the point $x_i\in \Omega _{n(i)}$. Define the related point
$z_{i}$ as an extrinsically  farthest point in $\Omega _{n(i)}$ to its lower boundary component
$\a_{n(i)}$  where the tangent plane to $\Omega _{n(i)}$ is vertical; since the set of all such points $z$ 
is compact in $\Omega_{n(i)}$ and a positive distance from $\a_{n(i)}$, the point $z_i$ exists. 
We now apply  our previous arguments with $z_{i}$ in place of $x_i$: consider for each $i\in \N$
the related surface
\[
\underline{\Theta }_{n(i)}=\frac{1}{d_{\underline{\partial}}(z_i,i)}
[f_{n(i)}(\Omega _{n(i)})-f_{n(i)}(a_{i})],
\]
where $d_{\underline{\partial }}(z_i,i)>0$ is the extrinsic distance
from $f_{n(i)}(z_{i})$ to $f_{n(i)}(\a_{n(i)})$ and $a_{i}$ is a point in $\a_{n(i)}$ closest to $z_i$. 
Observe that we do not know if $d_{\underline{\partial }}(z_i,i)\to \infty $ as $i\to \infty $
but we do know (from the previous paragraph) that the distances in $\rth$ from the origin to the top boundary 
component $\frac{1}{d_{\underline{\partial}}(z_i,i)}\left( f_{n(i)}(\be _{n(i)})-f_{n(i)}(a_{i})\right) $ of $\underline{\Theta }_{n(i)}$ 
diverges to infinity  as $i\to \infty $.

\begin{claim}
\label{claim4.15bis}
The sequence $d_{\underline{\partial }}(z_i,i)$ is bounded independently of $i$.
\end{claim}
\begin{proof}
Arguing by contradiction, suppose after choosing a subsequence, $d_{\underline{\partial }}(z_i,i)\geq 2i$.
By Claim~\ref{claim4.14}, the extrinsic distance from $z_i$ to $\be_{n(i)}$ is much larger than
the extrinsic distance from $z_i$ to $a_{i}$. Therefore, we can assume that the curve 
\[
\frac{1}{d_{\underline{\partial }}(z_i,i)}[f_{n(i)}(\be_{n(i)})-f_{n(i)}(a_i)]
\]
lies outside the ball $\B(i^2)$.
Consider the compact minimal surfaces $\underline{\Theta }_{n(i)}\cap \ov{\B}(i)$, which form 
a locally simply connected sequence of surfaces in $\R^3-\{ \vec{0}\} $ by our previous arguments. 
After extracting a subsequence, let ${\bf z}\in \R^3$ be the limit of the points 
\[
\frac{1}{d_{\underline{\partial }}(z_i,i)}[f_{n(i)}(z_i)-f_{n(i)}(a_i)],
\]
which is a point in the unit sphere. As before, Theorem~2.2 in~\cite{mpr8} implies that after passing to a subsequence, the 
$\underline{\Theta }_{n(i)}\cap \ov{\B}(i)$ converge to a 
minimal lamination $\mathcal{L}_+$ of $\R^3-\{ \vec{0}\} $, which extends to a lamination 
$\ov{\mathcal{L}_+}$ of $\R^3$ with a leaf $L_{\bf z}$ passing through ${\bf z}$. 

Now consider the surfaces 
\[
\Sigma_i=\frac{1}{d_{\underline{\partial}}(z_i,i)}\left[
f_{n(i)}(\Omega _{n(i)}\cup \Delta_{n(i)-1}\cup \Omega_{n(i)-1})-f_{n(i)}(a_i)\right], \quad i\in \N.
\]
For each $i$, $\Sigma _i$ is a noncompact planar domain bounded by two convex horizontal curves, 
that we call 
\[
\partial \Sigma _i^+=\frac{1}{d_{\underline{\partial}}(z_i,i)}
[f_{n(i)}(\be_{n(i)})-f_{n(i)}(a_i)],
\quad 
\partial \Sigma _i^-=\frac{1}{d_{\underline{\partial}}(z_i,i)}
[f_{n(i)}(\a_{n(i)-1})-f_{n(i)}(a_{i})],
\]
and $x_3(\partial \Sigma _i^-)<x_3(\partial \Sigma _i^+)$. Previous arguments show that 
the sequence of curves $\{ \partial \Sigma_i^-\} _i$ either converges to some point $q_{-}\in \R^3$ (that
would then lie in $\{ x_3\leq 0\} $, possibly being $\vec{0}$), or else $\{ \partial \Sigma_i^-\} _i$
diverges in $\R^3$. Maximality of the family $\{ \Delta_m\}_m$ implies as above that 
the sequence of surfaces $\{ \Sigma_i\cap \ov{\B}(i)\} _n$ is locally simply connected in 
$\R^3-W$ where $W=\{ \vec{0},q_{-}\} $ if $q_{-}$ exists, and $W=\{ \vec{0}\} $ otherwise.
Therefore, Theorem~2.2 in~\cite{mpr8} implies that after passing to a subsequence, the 
$\Sigma_i$ converge to a minimal lamination $\mathcal{L}$ of $\R^3-W$, which extends to a lamination 
$\ov{\mathcal{L}}$ of $\R^3$. Note that $\ov{\cL}$ contains $\ov{\cL}_+$ as a sublamination.
Also observe that the same arguments applied to the sequence of surfaces 
\[
\frac{1}{d_{\underline{\partial}}(z_i,i)}\left[
f_{n(i)}(\Omega_{n(i)-1})-f_{n(i)}(a_i)\right].
\]
give that  the surfaces $\frac{1}{d_{\underline{\partial}}(z_i,i)}\left[
f_{n(i)}(\Omega_{n(i)-1})-f_{n(i)}(a_i)\right]\cap \ov{\B}(i)$ converge 
to a minimal lamination $\mathcal{L}_-$ of $\R^3-W$, 
which extends to a lamination $\ov{\mathcal{L}_-}$ of $\R^3$. 
Moreover, $\ov{\cL_-}$ contains a leaf that passes 
through the origin, and $\ov{\cL}_-$ is a sublamination of $\ov{\cL}$.
By construction, the $\frac{1}{d_{\underline{\partial}}(z_i,i)}\left[ f_{n(i)}(\Omega_{n(i)-1})-f_{n(i)}(a_i)\right]$ 
lie in $\{ x_3\leq 0\} $ for each $i\in \N$, 
and thus, $\ov{\cL_-}$ is also contained in $\{ x_3\leq 0\} $. In particular, $\{ x_3=0\} $ is a leaf of $\ov{\cL}_-$. 
In this setting, Theorem~2.2 in~\cite{mpr8} implies 
that all leaves of $\mathcal{L}_-$ are horizontal planes, and thus, the same theorem gives that all leaves of $\ov{\cL}$ 
are horizontal planes. In particular, $L_{\bf z}$ is a horizontal plane.

Since the tangent plane to $\Omega_{n(i)}$ at $z_i$ is vertical, then the convergence of the 
$\underline{\Theta}_{n(i)}\cap \ov{\B}(i)$ to $L_{\bf z}$ cannot be smooth around ${\bf z}$. 
This property and Theorem~2.2 in~\cite{mpr8} 
imply that $\ov{\cL}_+$ is a foliation of $\R^3$ by horizontal planes and the $\underline{\Theta }_{n(i)}\cap \ov{\B}(i)$ 
converge to $\ov{\cL}$ outside the origin and one or two vertical lines (this is the singular set of convergence), one of which
passes through~${\bf z}$. This is impossible, since the compact surfaces $\underline{\Theta}_{n(i)}\cap [\ov{\B}(i^2/2)-\B(2)]$ have
uniformly bounded Gaussian curvature (this follows since the last surfaces do not have vertical tangent planes, and so
they are locally graphical hence stable, and by curvature estimates for stable minimal surfaces). Now Claim~\ref{claim4.15bis}
is proved.
\end{proof}

As a consequence of Lemma~\ref{claim4.12}, the diameter of the compact surface
\[
\frac{1}{d_{\underline{\partial}}(z_i,i)}\left[ f_{n(i)}(\Delta_{n(i)-1})-f_{n(i)}(a_i)\right]
\]
tends to zero as $i\to \infty $. In particular, the diameter of the top boundary curve of the last surface
tends to zero, which implies that 
\[
\frac{1}{d_{\underline{\partial}}(z_i,i)}\left[ f_{n(i)}(\a_{n(i)})-f_{n(i)}(a_i)\right] \to \vec{0} \quad
\mbox{as $i\to \infty $.}
\]
On the other hand, Claim~\ref{claim4.14} implies that 
\[
\frac{1}{d_{\underline{\partial}}(z_i,i)}\left[ f_{n(i)}(\be_{n(i)})-f_{n(i)}(a_i)\right] \quad
\mbox{diverges in $\R^3$ as $i\to \infty $.}
\]
Therefore, Theorem~2.2 in~\cite{mpr8} implies that after passing to a subsequence, the 
$\underline{\Theta }_{n(i)}\cap \ov{\B}(i)$ converge to a 
minimal lamination $\mathcal{L}_+$ of $\R^3-\{ \vec{0}\} $. From this point, we can repeat 
verbatim the arguments in the proof of Claim~\ref{claim4.15bis} to obtain a contradiction.
This contradiction shows that property $(\clubsuit)$ cannot hold, and so, item~4 of Proposition~\ref{lemma4.15}
is proven.

Finally, item~5 of Proposition~\ref{lemma4.15} follows from the fact that
the Gaussian curvature functions of the domains $\Omega_n$ and $\Delta_n$
become uniformly small as $n \rightarrow \infty$. Now the proof of
Proposition~\ref{lemma4.15} is complete.
\end{proof}

\begin{proposition}
 \label{propos3.13}
 Items 4, 5 and 6 of Theorem~\ref{thm1.3} hold in the Case~(G1) when $M_{\infty }$ is a catenoid.
 In particular, Theorem~\ref{thm1.3} holds in this case.
 \end{proposition}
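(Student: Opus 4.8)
The plan is to read off all three remaining items from the structural description of $E$ furnished by Proposition~\ref{lemma4.15}, following closely the pattern of Subsection~\ref{sec4.2}. Item~5 is immediate: Proposition~\ref{lemma4.15} asserts that the Gaussian curvature $K_E$ is asymptotically zero, and since $K_E$ is continuous on $E$ and $\partial E$ is compact, $K_E$ is bounded on $E$.

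For item~4, I would use the decomposition $E=\Delta_0\cup\Omega_1\cup\Delta_1\cup\Omega_2\cup\cdots$ from Proposition~\ref{lemma4.15} (after passing to a subend, so that every $\Omega_n$ has exactly one annular end), where each $\Delta_n$ is a compact annulus bounded by two convex horizontal curves that is a small normal graph over a homothetically shrunk copy of the fixed catenoidal piece $M_{\infty }(\ve)$, each $\Omega_n$ is the graph of a function with gradient less than $1$ over a planar domain with two convex horizontal boundary circles and one catenoidal end, consecutive $\Delta_n$ sit in pairwise disjoint horizontal slabs ordered by increasing height, and the separating planes $P_n$ have heights diverging to $+\infty$. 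The top half $\mathcal R_+$ of the relevant Riemann minimal example (with horizontal ends and boundary circle in $\{x_3=0\}$) admits the completely analogous decomposition into congruent catenoidal necks and congruent graphical pair-of-pants pieces, each with one planar-end puncture, separated by horizontal planes whose heights diverge to $+\infty$. Thus $E$ and $\mathcal R_+$ are diffeomorphic as smooth surfaces with boundary; to upgrade this to an ambient diffeomorphism I would construct $f\colon\rth\to\rth$ slab by slab, using that the configuration inside a slab of $\mathcal R_+$ (an unknotted catenoidal annulus bounding convex disks in the two parallel bounding planes, flanked by graphical half-pair-of-pants pieces) and the configuration inside the corresponding slab of $E$ are ``standard'' of the same combinatorial type, so each admits a diffeomorphism onto the other which is the identity near the slab faces; one then concatenates these over all slabs, over the initial compact region containing $\partial E$ and the finitely many exceptional pieces $\Omega$ with more than one end, and over the single ideal point at $+\infty$ common to both surfaces. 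This is exactly the reasoning already used for the Riemann case at the end of Subsection~\ref{sec4.2}, where $E$ was likewise exhibited as a neck-and-punctured-pair-of-pants chain matching that of $\mathcal R_+$.

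For item~6, I would reproduce the contradiction argument of Proposition~\ref{propos4.11} verbatim. Each annular end of $E$ has finite total curvature, so if $E$ were not conformally $\{z\in\C\mid 0<|z|\le 1\}$ minus a sequence of points converging to $0$, then $E$ would be conformally $\widehat D=\overline{\D}-\{x\in\D\mid |x|\le a\}$ minus a sequence $\{e_n\}$ with $|e_n|\searrow a$, for some $a\in(0,1)$. By Proposition~\ref{lemma4.15}, $E$ contains infinitely many pairwise disjoint subdomains each conformally a cylinder $\esf^1\times[0,\de]$ of a fixed conformal length $\de=\de(\ve)>0$ — take the pieces $\Delta_n$, or slightly enlarged ``Riemann-type'' regions containing them, whose conformal moduli are uniformly bounded below because the $\Delta_n$ are, after rescaling, uniformly $C^2$-close to the fixed compact catenoid $M_{\infty }(\ve)$ — and, viewed in $\widehat D$, each of these winds once around $\partial\widehat D$. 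The same coarea plus Cauchy--Schwarz estimate as in Proposition~\ref{propos4.11} then yields ${\rm Area}(\widehat D,g_0)\ge\sum_n 2\pi a^2\de=\infty$, contradicting finiteness of the flat area of $\widehat D$. Hence $a=0$, $E$ is conformally $\{0<|z|\le1\}$ minus a sequence converging to $0$; since that countable removed set is polar, bounded harmonic functions extend across it and $\partial E$ has full harmonic measure. Together with properness (proved in Section~\ref{sec3}) and items~1--3 (already established earlier in Section~\ref{sec4}), this completes the proof of Theorem~\ref{thm1.3} in the present case.

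I expect item~4 to be the delicate point. Although $E$ and $\mathcal R_+$ are evidently diffeomorphic as abstract surfaces with boundary, producing an honest orientation-preserving ambient diffeomorphism of $\rth$ requires matching the convex disks bounded by the neck curves on the correct side of each surface — so that $f(\mathcal R_+)$ is precisely the normalized $E$ of Theorem~\ref{thm1.3}, not merely a surface diffeomorphic to it — and patching the infinitely many local slab diffeomorphisms together with uniform control as one approaches the limit end. This obstacle is more a matter of careful bookkeeping than of new analysis; items~5 and~6 are comparatively routine once Proposition~\ref{lemma4.15} and the computations of Subsection~\ref{sec4.2} are in hand.
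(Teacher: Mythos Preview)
Your proposal is correct and follows essentially the same approach as the paper: item~5 from item~5 of Proposition~\ref{lemma4.15}, item~4 from the neck-and-punctured-pair-of-pants decomposition of $E$ given there (the paper simply asserts this implies item~4 without elaborating the slab-by-slab construction you sketch), and item~6 by repeating the coarea/Cauchy--Schwarz argument of Proposition~\ref{propos4.11} with the catenoidal annuli $\Delta_n$ supplying the uniformly thick conformal cylinders. Your worry about item~4 is not misplaced, but the paper treats the ambient diffeomorphism as a routine consequence of the matching combinatorial structure and does not supply further detail.
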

 \begin{proof}
 Recall that in the paragraph just before Proposition~\ref{propos4.11}, we explained that items 1, 2, 3 
 of Theorem~\ref{thm1.3} hold in the Case~(G1) when $M_{\infty }$ is a catenoid.
The description of $E$ as a union of domains $\Delta _n$ and $\Omega_n$ as given in
Proposition~\ref{lemma4.15} implies that item~4 of Theorem~\ref{thm1.3} holds. Item~5
of the same theorem follows from item~5 of Proposition~\ref{lemma4.15}. Finally, the arguments in the proof of Proposition~\ref{propos4.11} can be easily adapted to the
current situation, with the only change of the annular regions of ``Riemann type''
by similar annular regions of ``catenoid type'', namely regions of the type of the compact
annuli $\Delta_n=\Delta_n(\ve)$ that appear in Proposition~\ref{lemma4.15}, each of which
contains the image of a conformal embedding $f_n(\esf^1\times [0,\de ])$ for some $\de >0$
 independent
of $n$ (here we are using the notation in the proof of Proposition~\ref{propos4.11}). This finishes the proof of Proposition~\ref{propos3.13}.
  \end{proof}

\section{The proof of Theorem~\ref{thm1.2}.} 
\label{sec5}
Suppose $M \subset \rth$ is a complete, embedded
minimal surface with finite genus, an infinite number of ends and
compact boundary.

We first check that $M$ has at most two simple limit ends. Arguing
by contradiction, suppose $M$  has at least three simple limit ends,
say { $\bf e_1, e_2, e_3$}. By Theorem~\ref{thm1.3}, we can choose
three pairwise disjoint, properly embedded representatives $E_1,
E_2, E_3 \subset M$, representing {$\bf e_1, e_2, e_3$}
respectively, such that each one satisfies, after a possible
rotation, the conclusions of Theorem~\ref{thm1.3}. Embeddedness of $M$ implies that
after a rotation, the annular ends of $E_1,E_2,E_3$ may be assumed to be asymptotic to ends of
horizontal planes and catenoids with vertical axes. Furthermore, after a possible
reindexing, we may assume that the ends $E_1, E_2$ are
simple top limit ends, that $\partial E_1$ is a simple closed
curve in the $(x_1, x_2)$-plane and that $\partial E_2$ has constant
positive $x_3$-coordinate.

Let $D_{E{_1}} \subset \{ x_3=0\} $ be the planar disk with $\partial D_{E_{1}} =
\partial E_1$ and let $X_1$ be the closure of
the component of $\rth - (E_1 \cup D_{E{_1}})$ that lies above
$D_{E_1}$ locally near $D_{E_1}$. Similarly, we can define a horizontal disk $D_{E_{2}}$ with
$\partial D_{E_{2}} = \partial E_2$ and the related closed component
$X_2$ of $\rth - (E_2 \cup D_{E_2})$ above $D_{E_2}$.

An elementary topological analysis applied to  the topological
picture of $E_1$ and $E_2$ given in item~4 of
Theorem~\ref{thm1.3} shows, after possibly reindexing $E_1$ and
$E_2$ and replacing $E_1$ and $E_2$ by representing subends, that
$D_{E_2} \cap E_{1} \neq \mbox{\rm \O }$ and $X_2$ contains a
representative $E'_1 \subset E_1$ of the limit end of $E_1$ with
$\partial E'_{1} \subset D_{E_{2}} \subset \partial X_2$. Let $X_3$
be the closure of the component of $X_2 - E'_1$ which has $\partial
E_2$ in its boundary. The piecewise smooth surface $\partial X_3$ is
a good barrier for solving least-area problems in $X_3$ (Meeks and
Yau~\cite{my2}), see Figure~\ref{figure3}.
\begin{figure}
\begin{center}
\includegraphics[width=8.9cm]{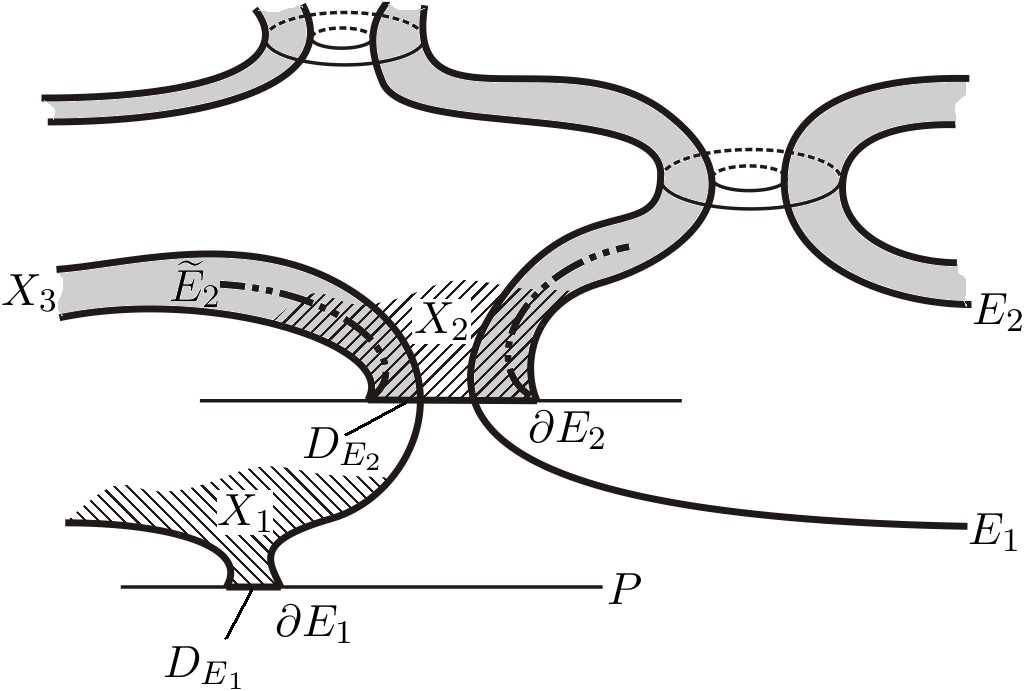}
\caption{The area-minimizing surface $\widetilde{E}_2$ is trapped
between two simple top limit ends $E_1,E_2$.} \label{figure3}
\end{center}
\end{figure}

Let $\widetilde{E}_{2}$ be a noncompact, properly embedded surface
of least-area in $X_3$ with $\partial \widetilde{E}_2 = \partial E_2
\subset \partial X_3$. By a result of Fischer-Colbrie~\cite{fi1},
the orientable, stable minimal surface $\widetilde{E}_2$ has finite
total curvature. Since $\widetilde{E}_2$ is contained in $X_2$, it
must have a finite number of ends, all of which are annuli and which
are parallel to the planar and catenoidal ends of $E_2$. Since
points of $\widetilde{E}_2$ near $D_{E_2}$ have $x_3$-coordinates
which are larger than the constant value $x_3(D_{E_2})$,
$\widetilde{E}_2$ must have a highest end which has positive
logarithmic growth by the maximum principle applied to the harmonic
function $x_3|_{\widetilde{E}_2}$. Hence, $\widetilde{E}_2$ has a
catenoid-type end representative $F$ of positive logarithmic growth.
Since the annular ends of $E_1$ have nonpositive logarithmic
growth, then none of the annular ends of $E_1$ lie above $F$. This
implies that $E_1$ lies below the region of $\R^3$ bounded by the
union of $F$ and a horizontal disk with boundary in $F$. Since $E_1$
also lies above some catenoid end of negative logarithmic growth,
the results in~\cite{ckmr1} imply that $E_1$ has quadratic area
growth. By the monotonicity formula, each annular end in $E_1$
contributes with at least $\frac{\pi }{2}R^2$ to the area growth of
$E_1$ in each ball $\B (R)$ for $R>0$ large. Hence, $E_1$
has a finite number of ends. This contradiction shows that $M$
cannot have more than two simple limit ends, which is item~1
of Theorem~\ref{thm1.2}.

Next we prove item~2 of Theorem~\ref{thm1.2}. If $M$ is
properly embedded in $\rth$, then the results in~\cite{ckmr1} imply
$M$ has one or two limit ends, which are the top and/or bottom ends
in the ordering of the ends of $M$. On the other hand, if $M$ has
one or two limit ends, then these limit ends are simple limit ends,
and so, these limit ends have proper representatives by
Theorem~\ref{thm1.3}. The remaining finite number of ends of $M$ are
then annuli, each of which is proper (see Theorem~\ref{thmmr} in
Section 4). Hence, $M$ is proper, which proves item~2 in
Theorem~\ref{thm1.2}.

Concerning item~3, suppose now that $M$ has a countable number
of limit ends. A result proven in pages 288, 289 of~\cite{mpe1}
states that the space of ends of $M$ embeds topologically as a
totally disconnected, closed subset $A$ of the closed unit interval
$I = [0,1]$. Since the set of limit points $L_A$ of $A$ is a closed
countable subspace of the metric space $I$ (and hence $L_A$ is
complete), Baire's theorem implies that $L_A$ contains a countable
dense set of isolated points (see Lemma~\ref{Baire} below). In
particular, if $L_A$ has at least three points, then $M$ has at
least three simple limit ends. Since $M$ cannot have more than two
simple limit ends by item~1 of Theorem~\ref{thm1.2}, then
$L_A$ consists of one or two points, and so $M$ has one or two limit
ends, each of which is a simple limit end. Hence, part~3-A of
Theorem~\ref{thm1.2} holds. As $M$ has at most two limit ends,
then item~2 of Theorem~\ref{thm1.2} implies that $M$ is
properly embedded in $\R^3$, which is part~{3-B}.

If $M$ has exactly two limit ends, then these limit ends are simple.
By the proof of item~1 of Theorem~\ref{thm1.2}, we deduce
that after a rotation of $M$, these simple limit ends have
representatives $E_1, E_2$, where $E_1$ is a top limit end of $M$
and $E_2$ is a bottom limit end of $M$. By item~1 of
Theorem~\ref{thm1.3}, the  annular ends of $E_1$ have nonpositive
logarithmic growths and the annular ends of $E_2$ have nonnegative
logarithmic growths. Thus, the embeddedness of $M$ implies that all
the annular ends of $M$ must have zero logarithmic growth, which
means that they are planar, and item~{3-C} is proved.

Now assume $\partial M=\mbox{\rm \O }$. Since $M$ has finite genus, then
the main result in~\cite{mpr4} insures that $M$ has two limit ends
and is recurrent for Brownian motion, which is part~{3-D}.

We finally prove item~{3-E} of Theorem~\ref{thm1.2}. Assume
$\partial M\neq \mbox{\rm \O }$. If the annular ends of $M$ are
horizontal and planar, there exists a horizontal plane $P$ that
intersects $M$ transversely in a finite number of simple closed
curves, and $P$ can be chosen to lie above $\partial M$. Hence, the
closure $\Sigma $ of each component of $M-P$ is a properly embedded
minimal surface with compact boundary and $\Sigma $ is contained in
a closed halfspace of $\rth$. Theorem~3.1 in~\cite{ckmr1} implies
that such a $\Sigma $ is a parabolic surface with boundary. Since
there are a finite number of such closed components $\Sigma $, and
the union of these components along related compact boundary
components is $M$, we conclude that $\partial M$ has full harmonic
measure, and so item~{3-E} holds provided that all of the
annular ends of $M$ are horizontal and planar.

If there exists an annular end with nonzero (say negative)
logarithmic growth, then this end is asymptotic to the end of a
negative half catenoid, and so, there exists a horizontal plane $P$
whose intersection with this catenoidal end is an almost circle, and
the end has a representative $E$ with $\partial E\subset P$ such that $E$ is
graphical over the outside of the open planar disk $D\subset P$
whose boundary is $\partial E$. We may assume that $P$ is at height zero.
The complement of the topological
plane $E\cup D$ in $M$ consists of several components,
each one with nonempty boundary contained in $M\cap D$. Since $M$
is proper, then $M\cap D$  is compact. In particular, $M-(E\cup
D)$ has a finite number of components. Let $\Sigma
_1,\ldots ,\Sigma _k$ be the components of $M-(E\cup
D)$ which lie below $E\cup D$. For each $i=1,\ldots
,k$, the surface with boundary $\Sigma _i$ is parabolic, since its
third coordinate function is a proper negative harmonic function. By
items~{3-A} and {3-C}, the surface $M$ has exactly one limit
end. Since a limit end of a properly embedded minimal surface in
$\R^3$ cannot lie below a catenoidal end of negative logarithmic
growth (see Lemma~3.6 in~\cite{ckmr1}), then the limit end of $M$
has a representative of genus zero $E_T$ which lies above $E\cup D$. 
In particular, the limit end of $M$ is its top
end. By item~{6} of Theorem~\ref{thm1.3}, the representative
$E_T$ is parabolic. Let $\Omega $ be the closure of one of the
(finitely many) components of $M-(E_T\cup \Sigma _1\cup \ldots \cup
\Sigma _k)$. Since $\Omega $ has a finite number of ends, each of
which is asymptotic to an end of a plane or half catenoid, then
$\Omega $ has quadratic area growth. Therefore, $\Omega $ is also a
parabolic surface with boundary. As $M$ is a finite union of
parabolic surfaces with boundary along their common compact
boundaries, we deduce that $M$ has full harmonic measure on its
boundary. This finishes the proof of Theorem~\ref{thm1.2}.
\hfill\penalty10000\raisebox{-.09em}{$\Box$}\par\medskip

For the sake of completeness, we prove the following elementary fact which
was needed in the above proof.
\begin{lemma} \label{Baire}
Suppose $X$ is a complete countable metric space, $L(X) \subset X$ is the
set of limit
points of $X$ and $S(X) = X-L(X)$ is the open set of nonlimit points of $X$. Then:
\begin{enumerate}
\item $S(X)$ is dense in $X$.
\item $L(X)$ is a complete countable metric space, and so, its set
$S(L(X))$ of isolated points is dense in $L(X)$.
\end{enumerate}
\end{lemma}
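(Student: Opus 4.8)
The plan is to prove Lemma~\ref{Baire} as a direct application of the Baire category theorem. First I would establish item~1. Since $X$ is countable, write $X = \{x_1, x_2, \ldots\}$, and note that each singleton $\{x_n\}$ is a closed subset of $X$. If $S(X)$ were not dense in $X$, then its closure would miss some nonempty open set $U \subset X$, so $U \subset L(X)$, i.e. $U$ would consist entirely of limit points of $X$. The set $U$, being open in the complete metric space $X$, together with the observation that an open subset of a complete metric space is itself a Baire space (or: pass to the closure $\ov U$, which is complete), is a nonempty Baire space. But $U = \bigcup_n (U \cap \{x_n\})$ is a countable union of closed subsets of $U$, so by Baire's theorem at least one $U \cap \{x_n\}$ has nonempty interior in $U$; hence $\{x_n\}$ is open in $U$, so $x_n$ is isolated in $X$, contradicting $U \subset L(X)$. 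Therefore $S(X)$ is dense in $X$.

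For item~2, the key point is that $L(X)$ is closed in $X$: if $y \notin L(X)$ then $y$ is isolated, so a punctured neighborhood of $y$ misses $X$ entirely, hence misses $L(X)$; thus $X - L(X) = S(X)$ is open and $L(X)$ is closed. A closed subset of a complete metric space is complete, and a subset of a countable set is countable, so $L(X)$ is a complete countable metric space. Now apply item~1 with $L(X)$ in place of $X$: the set of isolated points $S(L(X)) = L(X) - L(L(X))$ is dense in $L(X)$. This gives exactly the assertion, and in particular shows that if $L(X)$ has at least three points it has at least three isolated points, which is how the lemma was used in the proof of Theorem~\ref{thm1.2}.

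I do not expect any serious obstacle here; the only mild subtlety is making sure one applies Baire's theorem to a genuinely complete (or at least Baire) space. The cleanest route is to always replace an open set $U$ by $\ov U$ (the closure in $X$), which is a closed subset of the complete space $X$ and hence complete, while the countable family of closed singletons $\ov U \cap \{x_n\}$ still covers $\ov U$; if all of them had empty interior in $\ov U$, Baire's theorem would force $\ov U = \varnothing$, contradiction. This avoids having to invoke the fact that open subspaces of complete metric spaces are Baire, keeping the argument entirely within the standard Baire category theorem for complete metric spaces.
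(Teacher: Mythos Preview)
Your proof is correct and follows essentially the same idea as the paper's: both are straightforward applications of Baire's category theorem, with item~2 reduced to item~1 by noting that $L(X)$ is closed (hence complete) and countable. The only cosmetic difference is that the paper argues directly---writing $L(X)=\{p_1,p_2,\dots\}$, observing each $X_n=X-\{p_1,\dots,p_n\}$ is open and dense in $X$, and concluding $S(X)=\bigcap_n X_n$ is dense by Baire---whereas you argue by contradiction, applying Baire inside a hypothetical open $U\subset L(X)$ to produce an isolated point of $X$ in $U$.
\par
One small remark on your alternative $\ov{U}$-route: once Baire gives you a singleton $\{x_n\}$ open in $\ov{U}$, you should note that $x_n$ must actually lie in $U$ (if $x_n\in\ov{U}-U$, any neighborhood of $x_n$ meets $U$, contradicting that the neighborhood meets $\ov{U}$ only in $\{x_n\}$); then openness of $U$ in $X$ gives that $x_n$ is isolated in $X$, yielding the contradiction. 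This is implicit in what you wrote but worth making explicit.
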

\begin{proof}
Let $L(X) = \{ p_1, ...,p_n, ... \}$ be a listing, possibly finite
or empty, of the set of limit points of $X$. If $L(X) =
\mbox{\rm \O }$, then $\overline{S(X)} = S(X)= X$, and so, item~1 holds.
Otherwise, consider the subsets $X_n = X - \{ p_1,
...,p_n \}$ and note that each $X_n$ is an open dense subset of $X$.
The intersection of this countable collection of sets is
equal to $S(X)$ and must be dense in $X$ by Baire's theorem. Hence,
$S(X)$ is dense in $X$, which proves item~1 in the lemma.

Since $S(X)$ is an open set and $X$ is a complete countable metric space, then
$L(X) = X - S(X)$ is a closed countable set which is complete in the
induced metric. Hence, by item~1, $S(L(X))$ is dense in $L(X)$.
\end{proof}

\section{The proof of Corollary~\ref{corolnew}.} 
\label{sec6}
This last section is devoted to the following result, which has Corollary~\ref{corolnew} stated in
the Introduction as a special case.

\begin{corollary}
\label{corolnew2}
Suppose that $M\subset \R^3$ is a connected properly embedded minimal surface with
compact boundary and a limit end of genus zero.
Then $M$ is recurrent for Brownian motion if its boundary is empty, and otherwise
its boundary has full
harmonic measure.
\end{corollary}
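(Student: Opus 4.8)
The plan is to reduce Corollary~\ref{corolnew2} to the structural results already established in Theorems~\ref{thm1.2} and~\ref{thm1.3}, rather than to re-prove anything about limit ends from scratch. So suppose $M\subset\R^3$ is a connected, properly embedded minimal surface with compact boundary and a limit end $\mathbf e$ of genus zero. Being properly embedded, $M$ has a countable (indeed at most two) number of limit ends by~\cite{ckmr1}. If $M$ has finite topology it cannot have a limit end at all, so $M$ has an infinite number of ends; thus $M$ falls under the hypotheses of Theorem~\ref{thm1.2}. Now I want to upgrade the ``genus zero limit end'' hypothesis to ``finite genus''. First I would observe that $\mathbf e$ being a simple limit end (which it is, since $M$ has a countable number of limit ends, exactly the argument used via Lemma~\ref{Baire} in the proof of item~3 of Theorem~\ref{thm1.2}), the limit end $\mathbf e$ admits a proper representative $E$ as in Theorem~\ref{thm1.3}, which has genus zero. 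Then $M=\overline{M}\cup E$ where $\overline{M}=M-\mathrm{Int}(E)$ has compact boundary and finitely many ends — all annular, hence finite topology, hence finite genus. Therefore $M$ itself has finite genus, and Theorem~\ref{thm1.2} applies in full.

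Next I would run the conclusions of Theorem~\ref{thm1.2} through the two cases, empty boundary and nonempty boundary. If $\partial M=\O$, then part~3-D of Theorem~\ref{thm1.2} directly gives that $M$ is recurrent for Brownian motion, which is exactly the desired conclusion in this case (this is essentially the already-known result of~\cite{mpr4}, recovered here as a special case). If $\partial M\neq\O$, then part~3-E of Theorem~\ref{thm1.2} gives that $\partial M$ has full harmonic measure, which is precisely the assertion of Corollary~\ref{corolnew2} in this case. So once the finite-genus reduction in the previous paragraph is in place, the corollary is immediate from Theorem~\ref{thm1.2}, and Corollary~\ref{corolnew} is the sub-case $\partial M=\O$ (recurrence) together with the empty statement about harmonic measure being vacuous; more precisely Corollary~\ref{corolnew} states recurrence for any properly embedded $M$ with a genus-zero limit end, so one should also note that a properly embedded surface with empty boundary and a limit end has an infinite number of ends and hence falls under 3-D.

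The only genuinely non-trivial point, and the one I would be most careful about, is the reduction of ``one limit end of genus zero'' to ``finite genus.'' The subtlety is that a priori $M$ could have its genus concentrated in the other ends (the non-limit ends or the other limit end), so one must be sure those contribute no genus. The key fact doing the work is that for a properly embedded minimal surface of finite topology with compact boundary in $\R^3$, all ends are annular (Collin's theorem~\cite{col1} together with~\cite{mpe3}), so $\overline{M}=M-\mathrm{Int}(E)$, having compact boundary and finitely many ends each an annulus, is homeomorphic to a compact surface with boundary, hence has finite genus; and $E$ has genus zero by construction. Hence $M$ has finite genus. I would also double-check the case analysis for whether $M$ has one or two limit ends: if it has one limit end of genus zero we use the representative from Theorem~\ref{thm1.3}; if it has two limit ends, at least one has genus zero by hypothesis, and the other, by embeddedness and the structure of Theorem~\ref{thm1.2} (part~3-C forces all annular ends planar when there are two limit ends), again has a genus-zero representative — or more simply, having finite genus concentrated nowhere, both limit ends are simple and the whole surface has finite genus by the same truncation argument. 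After that bookkeeping, the corollary follows with no further analysis.
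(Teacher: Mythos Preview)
Your reduction to Theorem~\ref{thm1.2} has a genuine gap: you claim that the hypothesis ``$M$ has a limit end of genus zero'' forces $M$ to have finite genus, but this does not follow, and the corollary is stated precisely so as \emph{not} to assume finite genus. Your argument that $\overline{M}=M-\mathrm{Int}(E)$ has ``finitely many ends --- all annular'' is circular. If $M$ has two limit ends, removing the representative $E$ of the genus-zero one still leaves a surface with a limit end (the other one), hence infinitely many ends; your attempt to argue the second limit end also has genus zero invokes item~3-C of Theorem~\ref{thm1.2}, which already requires finite genus. Even in the one-limit-end case, ``finitely many ends'' does not imply the ends are annular: Collin's theorem needs finite topology as a hypothesis, and finite topology is finite genus plus finitely many ends, so you are assuming what you want to prove. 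Nothing in the paper or its references gives you ``properly embedded with compact boundary and finitely many ends $\Rightarrow$ finite genus''.

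The paper's proof avoids this entirely. It never claims $M$ has finite genus. Instead it takes the Theorem~\ref{thm1.3} representative $E$ of the genus-zero simple limit end and uses its explicit geometry (items~3 and~4: the convex boundary disk $D_E$ and the Riemann-example-like shape) to find two horizontal disks $D_1,D_2$ on opposite sides of $E$ bounding regions $X_1,X_2$ with $M-E$ disjoint from $X_1\cup X_2$. This forces $M-\Int(E)$ into the closed halfspace $\{x_3\le x_3(D_2)\}$. The argument that $M-E$ cannot enter $X_1$ or $X_2$ is the same least-area/catenoid-end trapping argument used in the proof of item~1 of Theorem~\ref{thm1.2} (paragraphs four and five there). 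Once $M-\Int(E)$ sits in a halfspace, Theorem~3.1 of~\cite{ckmr1} gives parabolicity of $M-\Int(E)$; $E$ itself is parabolic by item~6 of Theorem~\ref{thm1.3}; gluing along the common compact boundary gives the result. The empty-boundary case is reduced to the nonempty one by excising a small disk. The essential content you are missing is this halfspace containment of $M-\Int(E)$, which holds regardless of the genus of the rest of $M$.
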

\begin{proof}
Suppose for the moment that the corollary holds when
the surface $M$ has nonempty boundary.  Then,
in the special case that the boundary of $M$ is empty,  consider a
compact disk $D\subset M$ and note that $M-\Int(D)$ has full harmonic measure 
by assumption, which implies that $M$ is recurrent for Brownian motion.  Thus, it suffices
to prove the corollary in the special case that $M$ has
nonempty boundary.

Assume now that $\partial M\neq\mbox{\O}$.
Let $\widehat{E}\subset M$ be an end representative of a limit end of $M$ of genus zero.
Since $\widehat{E}$ is proper in $\rth$
with compact boundary, then
item~2 of Theorem~\ref{thm1.2} implies that $\widehat{E}$ has one or two simple limit ends.
Let  $E\subset \widehat{E}$ be an end representative of a simple limit end of $M$ of genus zero.
After a fixed rotation of $M$ and a replacement of $E$ by a subend representative of
its limit end, we may assume that $E$ satisfies the conclusions of Theorem~\ref{thm1.3}, and
$\partial M\subset \{ x_3<0\} $.

We claim that there exist a pair of horizontal open disks $D_1,D_2\subset \R^3-E$ with the following properties.
\begin{enumerate}[(N1)]
\item $\partial D_i\subset E$, $i=1,2$, and $0\leq x_3(D_1)<x_3(D_2)$.
\item $D_1\cap E=\mbox{\O}$ and if we denote by $X_1$ the closure of the mean convex region of $\R^3-(E\cup D_1)$,
then $D_2\subset \R^3-X_1$. In particular, $D_2\cap E=\mbox{\O}$.
\item Define $X_2$ as the closure of the mean convex region of $\R^3-(E\cup D_2)$. Then, $M-E$ is disjoint from
$X_1\cup X_2$. In particular, $M-E$ is contained in the halfspace $\{ x_3\leq x_3(D_2)\} $.
\end{enumerate}
To prove the claim and following the discussion in Sections~\ref{sec4.2} and~\ref{sec4.3}, 
we will explain how to construct the disks $D_1,D_2$ in each of the cases given by (G1) with $M_{\infty }$
being a Riemann minimal example with horizontal limit tangent plane at infinity, 
or $M_{\infty}$ being a vertical catenoid. In the first case, we simply take $D_1,D_2$ as
the horizontal disks bounded by almost-circles $c_0(\ve),c_1(\ve)$ contained in the boundary of a
piece $\mathcal{R}_0(\ve)\subset E$ as defined in the paragraph just before Assertion~\ref{ass4.8}. 
In the case  (G1) with $M_{\infty }$ being a vertical catenoid, we take $D_1,D_2$ as the convex horizontal disks bounded
by $\a_n$ and $\be_n$, respectively (here we are using the notation in (\ref{eq:alphabeta})).
Properties (N1), (N2) hold from item~4 of
Theorem~\ref{thm1.3}. Concerning item~(N3), if $M-E$ intersects $X_1$ then one can find a contradiction
by adapting the arguments in paragraphs four and five of the proof of Theorem~\ref{thm1.2}. Hence
$M-E$ is disjoint from $X_1$ and similarly, $M-E$ is disjoint from $X_2$.

As $M-\Int(E)$ is contained in a closed halfspace by item~(N3) and $M-\Int(E)$ is proper, then $M-\Int(E)$ is a 
parabolic surface with compact boundary by Theorem~3.1 in~\cite{ckmr1}. By item~6 in Theorem~\ref{thm1.3},
$E$ is also a parabolic surface with compact boundary. Therefore, $M=(M-\Int(E))\cup E$ is a 
parabolic surface with compact boundary, i.e., $\partial M$ has full harmonic measure.\end{proof}

\vspace{.2cm}

\center{William H. Meeks, III at  profmeeks@gmail.com\\
Mathematics Department, University of Massachusetts, Amherst, MA 01003}
\center{Joaqu\'\i n P\'{e}rez at jperez@ugr.es \qquad\qquad Antonio Ros at aros@ugr.es\\
Department of Geometry and Topology and Institute of Mathematics
(IEMath-GR), University of Granada, 18071, Granada, Spain}

\bibliographystyle{plain}

\bibliography{bill}

\end{document}